\documentclass{article}
\usepackage{graphicx} 
\usepackage{amssymb}
\usepackage{amsmath}
\usepackage{amsthm}
\usepackage{bm}
\usepackage{amscd}
\usepackage[all]{xy}
\usepackage{stmaryrd}
\usepackage[margin=1in]{geometry}
\usepackage{tikz-cd}
\usepackage{booktabs}
\usepackage{xcolor}
\usepackage{parskip}

\usepackage{xcolor}
\definecolor{Mylinkclr}{HTML}{666666}
\usepackage[pagebackref,colorlinks,linkcolor=Mylinkclr,citecolor=Mylinkclr]{hyperref}

\usepackage{zref-clever}
\zcsetup{cap}

\zcRefTypeSetup{equation}{
    Name-sg=,
    name-sg=,
    Name-pl=,
    name-pl=,
    Name-sg-ab=,
    name-sg-ab=,
    Name-pl-ab=,
    name-pl-ab=
}

\theoremstyle{plain}
\newtheorem{Thm}{{\textbf{Theorem}}}[section]
\AddToHook{env/Thm/begin}{%
   \zcsetup{countertype={Thm=Thm}}}
\zcRefTypeSetup{Thm}{Name-sg=}

\newtheorem{Lem}[Thm]{{\textbf{Lemma}}}
\AddToHook{env/Lem/begin}{%
   \zcsetup{countertype={Thm=Lem}}}
\zcRefTypeSetup{Lem}{Name-sg=}

\newtheorem{Prop}[Thm]{{\textbf{Proposition}}}
\AddToHook{env/Prop/begin}{%
   \zcsetup{countertype={Thm=Prop}}}
\zcRefTypeSetup{Prop}{Name-sg=}

\newtheorem{Cor}[Thm]{{\textbf{Corollary}}}
\AddToHook{env/Cor/begin}{%
   \zcsetup{countertype={Thm=Cor}}}
\zcRefTypeSetup{Cor}{Name-sg=}

\AddToHook{env/Conj/begin}{%
   \zcsetup{countertype={Thm=Conj}}}
\zcRefTypeSetup{Conj}{Name-sg=}

\newtheorem{Def}[Thm]{{\textbf{Definition}}}
\AddToHook{env/Def/begin}{%
   \zcsetup{countertype={Thm=Def}}}
\zcRefTypeSetup{Def}{Name-sg=}

\newtheorem{Rem}[Thm]{{\textbf{Remark}}}
\AddToHook{env/Rem/begin}{%
   \zcsetup{countertype={Thm=Rem}}}
\zcRefTypeSetup{Rem}{Name-sg=}

\AddToHook{env/Ex/begin}{%
   \zcsetup{countertype={Thm=Ex}}}
\zcRefTypeSetup{Ex}{Name-sg=}

\theoremstyle{plain}

\newcommand{\cA}{{\mathcal A}}
\newcommand{\cB}{{\mathcal B}}

\newcommand{\cO}{{\mathcal O}}

\newcommand{\cR}{{\mathcal R}}

\newcommand{\cU}{{\mathcal U}}

\newcommand{\cX}{{\mathcal X}}
\newcommand{\cY}{{\mathcal Y}}

\newcommand{\bA}{{\mathbb A}}

\newcommand{\bC}{{\mathbb C}}

\newcommand{\bP}{{\mathbb P}}
\newcommand{\bQ}{{\mathbb Q}}

\newcommand{\bZ}{{\mathbb Z}}

\newcommand{\bz}{{\bm z}}

\newcommand{\e}{\epsilon}

\DeclareMathOperator{\Supp}{Supp}
\DeclareMathOperator{\Pic}{Pic}

\DeclareMathOperator{\id}{id}
\DeclareMathOperator{\Sym}{Sym}

\DeclareMathOperator{\Aut}{Aut}

\DeclareMathOperator{\Div}{div}

\DeclareMathOperator{\Spec}{Spec}

\DeclareMathOperator{\IC}{IC}

\DeclareMathOperator{\sq}{sq}
\DeclareMathOperator{\orb}{orb}
\DeclareMathOperator{\UConf}{UConf}

\DeclareMathOperator{\D}{def}
\DeclareMathOperator{\rank}{rank}
\DeclareMathOperator{\ab}{ab}
\DeclareMathOperator{\Sing}{Sing}

\newcommand\ra{\rightarrow}
\newcommand{\bsq}{\bm{\sq}}

\newcommand{\Address}{{
  \bigskip
  \footnotesize
  
  Shomrik Bhattacharya, \textsc{Centre for Quantum Mathematics, University of Southern Denmark, Campusvej 55, DK-5230, Odense. M, Denmark}\par\nopagebreak
  \textit{E-mail address}: \texttt{bhattacharya@imada.sdu.dk}
 \vspace{0.5cm}
   
  Yuki Matsubara, \textsc{Centre for Quantum Mathematics, University of Southern Denmark, Campusvej 55, DK-5230, Odense. M, Denmark}\par\nopagebreak
  \textit{E-mail address}: \texttt{matsubara@imada.sdu.dk}

}}

\title{Fundamental groups of complements of wobbly divisors in intersections of two quadrics}
\author{Shomrik Bhattacharya, Yuki Matsubara}
\date{}

\begin{document}
\maketitle

\begin{abstract}
A smooth complete intersection $X\subset\mathbb{P}^{n+2}_{\mathbb{C}}$ of two quadrics admits an interpretation as a moduli space of bundles. For $n=2$, it parametrizes stable rank $2$ parabolic bundles on $\mathbb{P}^1$ with five marked points, while for $n=3$ it parametrizes stable rank $2$ bundles of odd
degree with fixed determinant on a genus $2$ curve. 
For general $n$, it can be interpreted as a moduli space of semistable twisted $\mathrm{Spin}$-bundles.
We study $X\setminus W$, where $W$ is the wobbly locus, consisting of bundles that admit a nonzero nilpotent Higgs field.
We prove that $W$ is an irreducible divisor for $n\geq 3$ and compute $H_1(X\setminus W,\mathbb{Z})$. Using a root-stack description, we identify $\pi_1(X\setminus W)$ with the kernel of a monodromy homomorphism from an orbifold mixed braid group. 
The Reidemeister–Schreier method then yields an explicit presentation in every dimension. 
For $n=2$, where $X$ is a degree $4$ del Pezzo surface and $W$ is the union of its sixteen $(-1)$-curves, we obtain a presentation with $10$ generators and $25$ relators and prove that both numbers are minimal among all finite presentations.
For $n = 3$, we show that $\pi_1(X \setminus W)$ admits a presentation with $4$ generators and $78$ relators.

\end{abstract}

\section{Introduction}
\subsection{Motivation}\label{Motivation}

A vector bundle on a curve is called \emph{very stable} if it admits no nonzero nilpotent Higgs field, and \emph{wobbly} otherwise. 
This notion goes back to Laumon~\cite{Lau}. 
Donagi and Pantev studied the wobbly locus in detail in the moduli space of rank $2$ parabolic bundles on $\mathbb{P}^1_{\mathbb{C}}$ with five marked points~\cite{DP}. 
In the geometric Langlands correspondence, Hecke eigensheaves are expected to be obtained as $\IC$-extensions of local systems on the complement of the wobbly locus, the very stable locus.
This motivates studying the topology of the very stable locus, and in particular its fundamental group, which governs the local systems on this locus.

Smooth intersections of two quadrics provide a family of moduli spaces in
which the wobbly locus can be described explicitly. Let
$X = Q_0 \cap Q_1 \subset \mathbb{P}^{n+2}_{\mathbb{C}}$ be a smooth $n$-dimensional intersection of two quadrics (see \zcref{Geometric_setup} for the
precise setup). For $n = 2$ and $3$, $X$ is a classical moduli space of vector
bundles:
\begin{itemize}
  \item if $n = 2$, then $X$ is isomorphic to the moduli space of rank $2$
  stable parabolic bundles of weight $w = (\tfrac12, \dots, \tfrac12)$ over
  $(\mathbb{P}^1_{\mathbb{C}}, \lambda_0 + \dots + \lambda_4)$, which is a
  del Pezzo surface of degree $4$ (cf.~\cite[Chapter~3]{DP}),
  \item if $n = 3$, then $X$ is isomorphic to the moduli space of rank $2$
  stable vector bundles of odd degree with fixed determinant over a genus
  $2$ curve~\cite[Theorem~1]{N68}.
\end{itemize}
For general $n$, Benedetti, H\"oring and Liu interpret $X$ as
a moduli space of semistable twisted $\mathrm{Spin}$-bundles \cite[Sections~1.2--1.3]{BHL25}. Under these identifications, the
cotangent bundle $T^*X$ carries a Hitchin integrable system,
studied by Hitchin~\cite{Hit25} and in~\cite{BHL25}. 
Hitchin gave a simple criterion for very stable bundles in terms of the roots of an associated polynomial
(\cite[Proposition 1]{Hit25}).

The resulting wobbly locus $W \subset X$ is a divisor. 
For $n = 3$, this follows from the work of Pal and Pauly, who also computed the class of $W$ \cite[Theorem 1.1, Section 5.1.2]{PP21}.
For $n = 2$, Donagi and Pantev identified it with the union of the sixteen $(-1)$-curves~\cite[Theorem~C]{DP}. 
In this paper, we study the topology of the very stable locus, the complement of the wobbly locus $X \setminus W$ for every $n$. Our main object is
its fundamental group $\pi_1(X \setminus W)$. The results are summarized in
Theorem \zcref{Summary_theorem}.

\subsection{Geometric setup}\label{Geometric_setup}

We use homogeneous coordinates $[x_0 : x_1 : \cdots : x_{n+2}]$ on $\mathbb{P}^{n+2}_{\mathbb{C}}$ with $n \geq 1$.
Consider the $n$-dimensional subvariety
\begin{equation*}
    X := Q_0 \cap Q_1 \subset \mathbb{P}^{n+2}_{\mathbb{C}},
\end{equation*}
where the quadrics $Q_0$ and $Q_1$ are defined by
\begin{equation*}
      Q_0 := \left\{ \sum_{j=0}^{n+2} x_j^2 = 0 \right\},
  \qquad
  Q_1 := \left\{ \sum_{j=0}^{n+2} \lambda_j x_j^2 = 0 \right\},
\end{equation*}
with pairwise distinct $\lambda_0, \dots, \lambda_{n+2} \in \bC$.
The variety $X$ is connected and smooth of complex dimension $n$ \cite[Proposition 2.1]{Reid1972}.

Following \cite[Proposition 1]{Hit25}, we consider the map
\begin{equation*}
    f \colon X \to \mathrm{Sym}^n(\mathbb{P}_{\bC}^1) \cong \mathbb{P}_{\bC}^n 
\end{equation*}
given by the following commutative diagram

\[
  \begin{tikzcd}[row sep=3em, column sep=4em]
    X=Q_0\cap Q_1  \arrow[d, "f"'] \arrow[r, hook] &
    \mathbb{P}^{n+2}_{\mathbb{C}} \arrow[d, "\bsq"] \\
    \mathrm{Sym}^n(\mathbb{P}_{\bC}^1)\simeq \mathbb{P}_{\bC}^n \arrow[r, hook] &
    \mathbb{P}^{n+2}_{\mathbb{C}}.
  \end{tikzcd}
\]
Here $\bsq$ denotes the coordinate-wise squaring map
$[x_0:\cdots:x_{n+2}]\mapsto [x_0^2:\cdots:x_{n+2}^2]$, and $f := \bsq|_{X}$.

Since $(\bZ/2\bZ)^{n+3}$ acts on $\bP^{n+2}_{\bC}$ by 
\begin{equation*}
    (\e_0, \dots, \e_{n+2}). [x_0 : \cdots : x_{n+2}] = [\e_0 x_0 : \cdots : \e_{n+2} x_{n+2}], \ \e_j \in \{1, -1 \},
\end{equation*}
the map $\bsq$ is a branched Galois cover with Galois group 
\begin{equation*}
    G := (\bZ/2\bZ)^{n+3}/\langle (-1, \dots, -1) \rangle \simeq (\bZ/2\bZ)^{n+2}.
\end{equation*}
It is branched along the hyperplanes $\{ y_j = 0 \}$ for $j = 0, \dots, n+2$, where $[y_0 : \cdots : y_{n+2}]$ are the homogeneous coordinates in the target $\bP_{\bC}^{n+2}$.
With these coordinates, we can describe the image of $f$ as
\begin{equation*}
    f(X) = \left\{\sum y_j = 0, \ \sum \lambda_j y_j = 0 \right\} \simeq \bP^n_{\bC},
\end{equation*}
and we have $X/G \simeq \bP^n_{\bC}$.

One convenient description of $f$ is the following.
For $x \in X$ and $z \in \bC$, set 
\begin{equation}\label{poly_p}
    p_x(z) := \sum_{j = 0}^{n+2} x_j^2 \prod_{\ell \neq j} (z - \lambda_{\ell}).
\end{equation}
The equations of $X = Q_0 \cap Q_1$ imply $\deg(p_x) \leq n$, and 
\begin{equation}\label{polynomial_p_x_lambda}
    p_x(\lambda_j) = x_j^2\prod_{\ell \neq j} (\lambda_j - \lambda_{\ell}),\  \ x_j^2 = \frac{p_x(\lambda_j)}{\prod_{\ell \neq j} (\lambda_j - \lambda_{\ell})}.
\end{equation}
Therefore, after homogenizing to degree $n$, the zero divisor $\Div(p_x) = q_1 + \cdots +q_n$ gives us the map from $X$ to $|\cO_{\bP^1}(n)| \simeq \Sym^n(\bP^1_{\bC}) \simeq \bP^n_{\bC}$, and this map is identified with $f = \bsq|_X$.

For each $j = 0, 1,  \dots, n+2$, define
\begin{equation*}
    B_j := \{D \in |\cO_{\bP^1}(n)|  \ | \ \lambda_j \in \Supp(D)\} \subset \bP^n_{\bC}, \ \ R_j := X \cap \{x_j = 0 \}.
\end{equation*}
By definition of $f \colon X \ra \bP^n_{\bC}$, the union $B := \cup_{j = 0}^{n+2} B_j$ is the branch locus of $f$, and the union $R := \cup_{j= 0}^{n+2} R_j$ is the ramification locus of $f$. 

\begin{Prop}[{\cite[Proposition 1]{Hit25}}]\label{Hitchin_prop}
    A point $x = [x_0: \cdots: x_{n+2}]$ on the intersection of quadrics $X = Q_0 \cap Q_1$ is very stable with respect to the integrable system (\zcref{Motivation}) if and only if the polynomial $p_x(z)$ has distinct roots (including $z = \infty$).
\end{Prop}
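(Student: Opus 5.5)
Since the statement is imported from \cite[Proposition 1]{Hit25}, the plan is to rebuild Hitchin's argument from an explicit model of the integrable system on $T^*X$ and show directly that the nilpotent cone meets the fibre $T^*_xX$ only in $0$ exactly when $\Div(p_x)$ is reduced. I would use the classical Hamiltonians for intersections of two quadrics (the Neumann/Uhlenbeck-type system). Identify $T^*_xX$ with vectors $y\in\bC^{n+3}$ satisfying $\sum x_jy_j=0=\sum\lambda_jx_jy_j$, modulo the appropriate normalization. Put $m_{jk}=x_jy_k-x_ky_j$. The Hamiltonians are $H_j=\sum_{k\neq j} m_{jk}^2/(\lambda_j-\lambda_k)$, and they assemble into the rational function $\sum_j H_j/(z-\lambda_j)$. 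After multiplying by $\prod_\ell(z-\lambda_\ell)$ one gets a polynomial $h_{(x,y)}(z)$ of degree at most $n-1$. A Higgs field $y$ at $x$ is nilpotent exactly when all Hamiltonians vanish, that is, when $h_{(x,y)}\equiv 0$.

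The first step is to express $h_{(x,y)}$ in terms of $p_x$. Writing $P(z)=\prod_\ell(z-\lambda_\ell)$, the key identity should be of Lax/Wronskian type. Concretely, I would set $q_{x,y}(z)=\sum_j x_jy_j\,P(z)/(z-\lambda_j)$, which has degree at most $n$ because of the cotangency conditions. Then $h$ should be, up to a constant, $\bigl(p_x\cdot r_y - q_{x,y}^2\bigr)/P$, where $r_y(z)=\sum_j y_j^2P(z)/(z-\lambda_j)$. This follows from the Cauchy–Binet expansion $\sum_{j<k}m_{jk}^2/((z-\lambda_j)(z-\lambda_k)) = \bigl(\sum x_j^2/(z-\lambda_j)\bigr)\bigl(\sum y_j^2/(z-\lambda_j)\bigr)-\bigl(\sum x_jy_j/(z-\lambda_j)\bigr)^2$, together with the partial-fraction identity for $H_j$. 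So nilpotency becomes the polynomial identity $p_x r_y = q_{x,y}^2$.

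The second step treats the two implications.

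If $p_x$ has distinct roots (counting $\infty$), then $p_x\,r = q^2$ forces $p_x\mid q$. Writing $q=p_xs$ gives $r=p_xs^2$. Evaluating at $\lambda_j$ relates $y_j$ linearly to $x_j$, namely $y_j$ equals $x_j$ times $s(\lambda_j)$. Degree bookkeeping, using the cotangency conditions and the extra root at $\infty$ when $\deg p_x<n$, should force $s$ to be constant. That makes $y$ proportional to $x$, which is zero in $T^*_xX$. Hence $x$ is very stable.

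Conversely, if $p_x$ has a repeated root $a$, write $p_x=(z-a)^2u$. I would construct $q=(z-a)u\cdot c$ and $r=u\cdot c^2$ with a suitable linear $c$. Then I would solve back for $y$ with $y_j^2$ and $x_jy_j$ matching the residues at $\lambda_j$. This is consistent because $(x_jy_j)^2=x_j^2y_j^2$ holds automatically from $q^2=p_xr$. The result is a nonzero nilpotent covector. The case where the repeated root is at $\infty$ is handled by a Möbius change of coordinate.

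The main obstacle is the first step: pinning down the exact normalization of the Hamiltonians of the integrable system in \zcref{Motivation} and the identification of $T^*X$, so that "nilpotent" literally means the vanishing of $h$. The degree and sign bookkeeping, especially at $z=\infty$, is where errors are likely. If this becomes unwieldy, I would instead cite \cite{Hit25} for the Lax form and check only the discriminant criterion.
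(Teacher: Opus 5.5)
The paper gives no proof of this statement; it quotes \cite[Proposition 1]{Hit25}, so your argument has to stand on its own. Your reduction is sound. The weighted Lagrange identity plus partial fractions gives $\sum_j H_j/(z-\lambda_j)=(p_xr_y-q_{x,y}^2)/P^2$, so nilpotency is exactly $p_xr_y=q_{x,y}^2$. The converse construction also works, provided you take $c$ not divisible by $z-a$, e.g.\ $c=1$.

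\textbf{The gap: the model of the cotangent fibre.} It breaks the forward implication precisely when $\infty$ is a simple root. The space $\{y:\sum_jx_jy_j=0=\sum_j\lambda_jx_jy_j\}/\langle x\rangle$ is $T_xX$. The dot product identifies it with $T^*_xX$ only if $\sum_j\lambda_j^2x_j^2\neq0$, i.e.\ only if $\deg p_x=n$. When $\deg p_x=n-1$ one has $\sum_j\lambda_j^2x_j^2=0$. Then $y=\Lambda x:=(\lambda_jx_j)_j$ satisfies both of your conditions and is not proportional to $x$. It gives $q_{x,y}=z\,p_x$ and $r_y=z^2p_x$, hence $h_{(x,y)}\equiv0$. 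So your expectation that the degree count and the root at $\infty$ force $s$ to be constant is false: $s=z$ occurs. Worse, in your model every point at which $\infty$ is a simple root would be wobbly, which contradicts the statement.

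\textbf{The repair.} Use the actual cotangent space. The conormal directions at $x$ are $dQ_0\leftrightarrow x$ and $dQ_1\leftrightarrow\Lambda x$, so $T^*_xX\simeq\{y:\sum_jx_jy_j=0\}/\langle x,\Lambda x\rangle$.
\begin{itemize}
\item You must check that each $H_j$ is invariant under $y\mapsto y+s\Lambda x$. The term linear in $s$ is $-2s\,x_j\bigl(x_j\sum_kx_ky_k-y_j\sum_kx_k^2\bigr)=0$, and the quadratic term is $s^2x_j^2\sum_k(\lambda_j-\lambda_k)x_k^2=0$.
\item In this model $q_{x,y}$ only has degree $\le n+1$, but the forward direction becomes cleaner. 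From $r_y=p_xs^2$ with $\deg r_y\le n+2$ and $\deg p_x\ge n-1$ you get $\deg s\le1$. So the vector $y=s(\Lambda)x$ produced by your evaluation at the $\lambda_j$ lies in $\langle x,\Lambda x\rangle$, i.e.\ it is zero.
\item In the converse, nonvanishing must now be checked modulo $\langle x,\Lambda x\rangle$. For a finite double root with $c=1$ you get $q_{x,y}/p_x=1/(z-a)$, which is not of the form $\alpha+\beta z$.
\item For a multiple root at $\infty$ ($\deg p_x\le n-2$) you can skip the M\"obius reduction and take $y=\Lambda^2x$. This gives $q_{x,y}=z^2p_x$ and $r_y=z^4p_x$, and $y\notin\langle x,\Lambda x\rangle$ because every point of $X$ has at least three nonzero coordinates.
\end{itemize}
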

Let $\Delta \subset \Sym^n(\bP^1_{\bC}) \simeq \bP^n_{\bC}$ be the discriminant locus, and set
\begin{equation*}
    W:= f^{-1}(\Delta)_{red}, \ \ U := \bP^n_{\bC} \setminus \Delta, \ \ Y := X \setminus W.
\end{equation*}
Then Proposition \zcref{Hitchin_prop} implies that $W$ is the {\it wobbly locus} and $Y$ is the {\it{very stable locus}} with respect to the corresponding integrable system (\zcref{Motivation}).
We are interested in computing $\pi_1(Y) = \pi_1(X \setminus W)$.

\subsection{Results}
Our computation of $\pi_1(X \setminus W)$ is based on the following description in terms of braid groups.
Let 
\begin{equation*}
    SB_{n+3, n} := \pi_1(\UConf_n(\bP^1_{\bC} \setminus \{ \lambda_0, \dots, \lambda_{n+2}\})) \simeq \pi_1(\bP^n_{\bC} \setminus (\Delta \cup B))
\end{equation*}
be the spherical mixed braid group, with meridians $a_j$ and half-twists $\sigma_i$.
Here, $\UConf_n(S)$ is the unordered configuration space of $n$ distinct points on $S$.
Let $ \langle \langle a_0^2, \dots, a_{n+2}^2 \rangle \rangle_{\cB}$ be the normal closure of $\{a_0^2, \dots, a_{n+2}^2  \}$ within $SB_{n+3, n}$, and $\eta_j \in G$ be the class of the sign vector whose $j$th entry is $-1$ and whose other entries are $1$.
In this notation, we can describe $\pi_1(X \setminus W)$ as follows.
\begin{Thm}[Corollary \zcref{ker_description_of_pi_1}]
    For every $n \geq 1$,
    \begin{equation*}
        \pi_1(X \setminus W) \simeq \ker \left(SB_{n+3, n}/ \langle \langle a_0^2, \dots, a_{n+2}^2 \rangle \rangle_{\cB} \xrightarrow{\tilde{\mu}}G\right),
    \end{equation*}
    where $\tilde{\mu}$ is the surjective monodromy homomorphism satisfying $\tilde{\mu}(a_j) = \eta_j$ and $\tilde{\mu}(\sigma_i) = 1_G$.
\end{Thm}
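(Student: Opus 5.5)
The plan is to realize $Y = X\setminus W$ as a finite Galois cover of an orbifold and then read off $\pi_1$ from the orbifold fundamental group. The map $f\colon X\to\bP^n_\bC$ is the quotient by $G$, so $f$ restricts to a map $Y\to U=\bP^n_\bC\setminus\Delta$, and $Y = f^{-1}(U)$. Over $U$, $f$ is branched only along $B\cap U$, with all ramification indices equal to $2$.

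First I check this locally. At a point of $R_j$ not lying over $\Delta$, only the coordinate $x_j$ vanishes: if two coordinates $x_j,x_k$ vanished, then $p_x$ would vanish at both $\lambda_j$ and $\lambda_k$. That is not yet a multiple root, so a separate argument is needed. Since $X$ is smooth, the smooth intersection condition forces at most one coordinate to vanish at any point, as in Reid's analysis. Hence the stabilizer of such a point is $\langle\eta_j\rangle\cong\bZ/2\bZ$, acting as a reflection, and $B_j\cap U$ is smooth there. Moreover, $B_j$ and $B_k$ meet only at divisors containing both $\lambda_j$ and $\lambda_k$; that is allowed in $U$, but $f^{-1}$ of such points would need $x_j=x_k=0$, which is excluded. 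So I must recheck this: on $U$ the branch divisors may cross, and then the stabilizer is $\langle\eta_j,\eta_k\rangle$, still generated by reflections with normal crossings. Either way, $Y\to U$ is a $G$-quotient whose orbifold structure is the root stack $\cU:=\sqrt[2]{(U,B\cap U)}$. Since $G$ acts freely on $Y$ as an orbifold cover of $\cU$ (stabilizers being exactly reflection groups matching the root stack), $Y\to\cU$ is an étale Galois $G$-cover.

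Next, I compute $\pi_1^{\orb}(\cU)$. For the root stack along the divisor $B\cap U$ in the smooth variety $U$, the orbifold fundamental group is $\pi_1(U\setminus B)/\langle\langle m_j^2\rangle\rangle$, where $m_j$ is a meridian of $B_j$. This is the standard result on root stacks. Each $B_j\cap U$ is irreducible, being the image of the connected space $\UConf_{n-1}(\bP^1\setminus\{\lambda_j\})$ minus further points, so a single meridian class per $j$ suffices up to conjugacy. Identifying $U\setminus B=\UConf_n(\bP^1_\bC\setminus\{\lambda_0,\dots,\lambda_{n+2}\})$ gives $\pi_1^{\orb}(\cU)=SB_{n+3,n}/\langle\langle a_j^2\rangle\rangle_\cB$, where $a_j$ is a meridian of $B_j$ (a point looping around $\lambda_j$).

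Then the Galois correspondence for connected étale covers of the orbifold gives $\pi_1(Y)\cong\ker(\tilde\mu)$. Here $\tilde\mu$ is the monodromy of the $G$-cover. Connectedness of $Y$ holds because $X$ is smooth and irreducible and $W$ is a proper closed subset. To pin down $\tilde\mu$: a meridian of $B_j$ lifts to the deck transformation fixing $R_j$, which is $\eta_j$. The half-twists $\sigma_i$ are supported in a disc avoiding $B$, and the squaring cover's monodromy depends only on winding around the hyperplanes $y_j=0$. Pulling back, the monodromy of $\bsq$ on $\bP^{n+2}\setminus\{\prod y_j=0\}$ is determined by the homology class, and $\sigma_i$ has zero linking with each $B_j$, so $\tilde\mu(\sigma_i)=1$. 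Surjectivity follows from the $\eta_j$ generating $G$.

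The main obstacle is the local analysis near points of $U$ where several $B_j$ meet, or where $B_j$ is singular. There I must verify that $f$ is, étale locally, exactly the quotient by the reflection group $\langle\eta_j,\eta_k,\ldots\rangle$, so that $Y$ is the root-stack cover rather than a cover with extra orbifold points. I would check this by the explicit local coordinates $x_j^2=p_x(\lambda_j)/\prod(\lambda_j-\lambda_\ell)$: on $U$ the values $p_x(\lambda_j)$ for the vanishing indices serve as independent local coordinates, because the roots of $p_x$ are distinct.
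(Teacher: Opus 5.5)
Your outline is essentially the paper's proof. You show that $Y\to\cU_n$ is an \'etale Galois $G$-cover (the paper's equivalence $[Y/G]\simeq\cU_n$), identify $\pi_1^{\orb}(\cU_n)\simeq SB_{n+3,n}/\langle\langle a_0^2,\dots,a_{n+2}^2\rangle\rangle_{\cB}$, and take $\pi_1(Y)$ as the kernel of the monodromy. Your local check reads $X=\bsq^{-1}(f(X))$ as $x_j^2=p_x(\lambda_j)/c_j$, with the evaluation forms for $j\in J(x)$ independent by Vandermonde; this is a valid replacement for the paper's Jacobian-based square-chart lemma. You still need a global comparison map, which comes, as in the paper, from the square-root data $(\cO_Y(R_j^\circ),t_j)$ via the universal property of the root stack.

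Drop the opening claim that smoothness of $X$ forces at most one $x_j$ to vanish. It is false for $n\ge 2$: for $|J|\le n$ one has $B_J\cap U\neq\emptyset$, and its preimage consists of points with $x_j=0$ for all $j\in J$. So the crossing case, with stabilizer $\langle\eta_j : j\in J(x)\rangle\simeq(\bZ/2\bZ)^{|J(x)|}$, is exactly the one that occurs.
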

This description follows from a root-stack interpretation of the branched covering $f$, and allows us to apply the Reidemeister--Schreier method to obtain an explicit presentation.

We summarize our results in the following theorem.
\begin{Thm}\label{Summary_theorem}
    \begin{enumerate}
        \item[(1)] For $n = 1$, $X$ is an elliptic curve in $\bP^3_{\bC}$, $W = \emptyset$, and
                    \begin{equation*}
                      \pi_1(X \setminus W) = \pi_1(X) \simeq H_1(X, \bZ) \simeq \bZ^2.
                    \end{equation*}
        \item[(2)] For $n = 2$, $X$ is a degree $4$ del Pezzo surface $dP_5$ in $\bP^4_{\bC}$, and $W$ is the union of $16$ $(-1)$-curves.
                   The Reidemeister–Schreier presentation of $\pi_1(X \setminus W)$ constructed in \zcref{Section_Reidemister_Schreier} has $65$ generators and $240$ formal relators.
                   It admits a presentation with $10$ generators and $25$ relators, and both numbers are minimal among all finite presentations.
                   Its abelianization $H_1(X\setminus W, \bZ)$ is isomorphic to $\bZ^{10}$, and its second group homology $H_2(\pi_1(X \setminus W), \bZ)$ is isomorphic to $\bZ^{25}$.
        \item[(3)] For $n \geq 3$, $W$ is irreducible of class $4(n-1)H$ with $H = c_1(\cO_X(1))$, and has projective degree $16(n-1)$ in $X$.
                   The Reidemeister–Schreier presentation of $\pi_1(X \setminus W)$ has $2^{n+3}n +1$ generators and $2^{n+2}(2n^2 + 2n + 3)$ formal relators.
                   Its abelianization $H_1(X\setminus W, \bZ)$ is isomorphic to $\bZ/4(n-1)\bZ$.
                   For $n = 3$, $\pi_1(X \setminus W)$ admits a presentation with $4$ generators and $78$ relators.
     \end{enumerate}
\end{Thm}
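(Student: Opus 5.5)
The plan is to treat the three ranges of $n$ separately, with the kernel description of the previous theorem (Corollary \zcref{ker_description_of_pi_1}) as the common input.

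\textbf{The case $n=1$.} Here $p_x$ has degree at most $1$, so its single root, counted with $z=\infty$, is automatically simple. Hence $\Delta=\emptyset$ in $\Sym^1(\bP^1_{\bC})$ and $W=\emptyset$. The curve $X$ is a smooth complete intersection of two quadrics in $\bP^3_{\bC}$, so it has genus $1$ by adjunction, and therefore $\pi_1(X)\simeq\bZ^2$.

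\textbf{The case $n=2$.} The identification of $W$ with the sixteen $(-1)$-curves I take from Donagi--Pantev \cite[Theorem~C]{DP}. One can also see it directly: $\Delta\subset\bP^2_{\bC}$ is a conic tangent to the five lines $B_j$, and its preimage splits into lines. For the group itself I would proceed as follows.
\begin{enumerate}
\item Write a standard presentation of $SB_{5,2}$ in terms of the $a_j$ and $\sigma_1$, and impose the relations $a_j^2=1$.
\item Choose a Schreier transversal of $G\simeq(\bZ/2\bZ)^4$ consisting of products of distinct $a_j$'s, and run Reidemeister--Schreier for $\ker\tilde\mu$. The counts (number of generators times $|G|$, plus generator/relator bookkeeping) give $65$ generators and $240$ formal relators.
\item Simplify by Tietze moves, ideally machine-assisted, down to $10$ generators and $25$ relators.
\item Compute $H_1=\bZ^{10}$ directly from the abelianized presentation.
\end{enumerate}
Minimality then follows from two bounds. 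Any presentation needs at least $\rank H_1=10$ generators. Its deficiency is at most $-\rank H_2(\pi)$, so a presentation with $10$ generators and $25$ relators is optimal once $H_2(\pi_1,\bZ)\simeq\bZ^{25}$. Geometrically one could argue that $Y=X\setminus W$ is aspherical, or compare with $H_2(Y)$ via Hopf's formula. I would compute $H_2$ from the presentation complex, by showing that the presentation is aspherical or via Fox calculus on the $25$ relators.

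\textbf{The case $n\ge 3$, irreducibility of $W$.} The discriminant $\Delta$ is irreducible. Over $U\setminus B$ the cover $f$ is Galois with group $G$, so the components of $f^{-1}(\Delta)$ correspond to orbits of $G$ on the components over a generic point of $\Delta$. Equivalently, I would show that the local monodromy of the $G$-cover around loops lying in $\Delta\setminus B$ generates $G$.

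To do this, parametrize $\Delta$ by $\bP^1\times\Sym^{n-2}\bP^1$, sending $(q,D)$ to $2q+D$. Moving $q$ around $\lambda_j$ picks up $\eta_j^2=1$, but moving a point of $D$ around $\lambda_j$ picks up $\eta_j$. For $n\ge 3$, $D$ is nonempty, so all the $\eta_j$ arise and the preimage is irreducible. For $n=2$, $D$ is empty and only the trivial element arises, which is consistent with the $16=|G|$ components.

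\textbf{The case $n\ge 3$, class of $W$.} The discriminant has degree $2(n-1)$ in $\bP^n_{\bC}$, and $f^*\cO(1)=\cO_X(2)$, so $f^*\Delta=4(n-1)H$. I then need to check that $f^*\Delta$ is reduced, i.e. that $f$ is unramified at the generic point of $W$. This holds because a generic point of $\Delta$ avoids $B$. Hence $[W]=4(n-1)H$, and the projective degree is $4(n-1)H^n=4(n-1)\cdot4=16(n-1)$.

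\textbf{The case $n\ge 3$, homology.} Since $X$ is simply connected for $n\ge 3$ and $\Pic X=\bZ H$, the exact sequence
\[
H^2(X)\to H^2(Y)\to \dots,\qquad H_1(Y)\simeq \bZ/(W\text{-class in }\Pic)
\]
gives $H_1(Y)\simeq\bZ/4(n-1)\bZ$ when $W$ is irreducible. Here I use $H_1(X\setminus W)\simeq \mathrm{coker}(\bZ[W]\to H^2(X))$ together with $H^2(X,\bZ)=\bZ H$, where $H$ is primitive, which is fine by Lefschetz.

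\textbf{The case $n\ge 3$, presentations.} The Reidemeister--Schreier counts follow from the corollary with $|G|=2^{n+2}$: each of the $2n+2$ generators of $SB_{n+3,n}$ contributes $|G|-1$ or $|G|$ Schreier generators, and each relator contributes $|G|$ conjugates. I would only verify these counts against a chosen presentation of $SB_{n+3,n}$. For $n=3$ the reduction to $4$ generators and $78$ relators is again a Tietze simplification.

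\textbf{Main obstacle.} The hardest step is the minimality claim for $n=2$, namely the computation $H_2(\pi_1)=\bZ^{25}$. It requires an asphericity or explicit homology computation that is not formal. I would first try to show that $Y$ is a $K(\pi,1)$ and compute $\chi(Y)=\chi(X)-\chi(W)$, hoping that $1-10+25=16$ matches.
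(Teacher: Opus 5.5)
The genuine gap is in (2). Minimality rests on the lower bound $\operatorname{rank}H_2(\pi,\bZ)\ge 25$, and none of the routes you list produces it.

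Fox calculus on the $25$ relators computes $H_2$ of the presentation complex $K$. Since every $q_i$ lies in $[F,F]$, the abelianized Fox matrix vanishes and $H_2(K,\bZ)\simeq\bZ^{25}$. But Hopf's sequence $\pi_2(K)\to H_2(K,\bZ)\to H_2(\pi,\bZ)\to 0$ makes this only an \emph{upper} bound, and first-order Fox derivatives cannot see the spherical classes.

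Comparing with $Y$ has the same defect. $Y$ is affine ($W\sim 4H$ is ample) with $\chi(Y)=8-(-8)=16$, so $b_2(Y)=25$. But $H_2(Y)\to H_2(\pi)$ is again a surjection, so it also bounds only from above. Asphericity would close the gap, and $16=1-10+25$ is consistent with it, but you give no argument for it, and none is needed.

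The missing idea, which the paper uses, is to detect $H_2$ through the quadratic parts of the relators. Let $F$ be free on $g_1,\dots,g_{10}$ and $N$ the normal closure of the $q_i$. Since $N\subset[F,F]$, Hopf's formula gives $H_2(\pi,\bZ)\simeq N/[F,N]$, which is a quotient of $\bZ^{25}$. Map it further to $[F,F]/[F,[F,F]]\simeq\bigwedge^2\bZ^{10}$ (Hall's basis theorem). The twenty-one relators $g_sxg_s^{-1}\Phi_s(x)^{-1}$ go to $\bar g_s\wedge\bar g_j$ plus terms with both indices in $\{1,4,6,7,8,9,10\}$. The relators $q_{22},\dots,q_{25}$ go to $-\bar g_7\wedge\bar g_9$, $\bar g_1\wedge\bar g_8$, $-\bar g_6\wedge\bar g_{10}$, and $-\bar g_1\wedge\bar g_4+\cdots$. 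These images are linearly independent, so $\bZ^{25}\to H_2(\pi,\bZ)$ is injective. Hopf's surjection then gives $s-d+10=b_2(K')\ge 25$ for the presentation complex $K'$ of any presentation.

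Several smaller points also need fixing.
\begin{itemize}
\item Your deficiency bound $\mathrm{def}(\pi)\le-\operatorname{rank}H_2(\pi)$ is false: the $10$-generator, $25$-relator presentation itself violates it. The correct bound is $d-s\le b_1-b_2=-15$, which together with $d\ge 10$ gives $s\ge 25$.
\item Your Reidemeister--Schreier bookkeeping (``each of the $2n+2$ generators contributes $|G|-1$ or $|G|$'') does not reproduce the stated counts; for $n=2$ it cannot give $65$. The generator count comes from Schreier's formula $|G|(r-1)+1=2^{n+3}n+1$, using the $r=2n+1$ generators $a_1,\dots,a_{n+2},\sigma_1,\dots,\sigma_{n-1}$ after eliminating $a_0=(P_m\Omega_n)^{-1}$. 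The relator count comes from taking $|G|$ conjugates of each of the $2n^2+2n+3$ relators (A1)--(M3), (SQ), (SQ0) of the specific affine presentation.
\item In (3), $H_1(X\setminus W)$ is the cokernel of $H_2(X)\to\bZ$, $x\mapsto x\cdot W$, not of $\bZ[W]\to H^2(X)$. The two agree here only because $H^2(X)$ has rank one. To identify $H_2(X,X\setminus W)\simeq\bZ$ you must first discard $\operatorname{Sing}W$, which has codimension $\ge 2$.
\end{itemize}

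Your monodromy proof that $W$ is irreducible for $n\ge 3$ is correct and genuinely different from the paper's. A loop in $\Delta\setminus B$ that moves one point of $D$ once around $\lambda_j$ has Kummer monodromy $\eta_j$, so the $G$-torsor over $\Delta\setminus B$ is connected. The paper instead realizes $W$ as the image of $\nu_n\colon\bP^1\times Z_n\to X$ and uses connectedness of the complete intersection $Z_n$. That route has the advantage of producing the sixteen lines explicitly when $n=2$.
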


\subsection{Outline of the paper}
We briefly outline the contents of this paper.
In \zcref{irreducibility_homology_section}, we prove the irreducibility of $W$ for $n \geq 3$, and compute the first homology group of $X \setminus W$.
In \zcref{fundamental_group_section}, by using a root stack and quotient stack description, we identify the fundamental group $\pi_1(X \setminus W)$ with a subgroup of an orbifold mixed braid group.
In \zcref{Section_Reidemister_Schreier}, we use the Reidemeister--Schreier method to obtain the explicit description of $\pi_1(X \setminus W)$.
In \zcref{example_section}, we apply our results to the case $n = 1, 2, 3$.

\subsection*{Acknowledgements}
We would like to thank our Ph.D. supervisor, Vivek Shende, for suggesting this project and for his warm encouragement.
The work presented in this paper is supported by VILLUM FONDEN, VILLUM Investigator grant 37814, Novo Nordisk Foundation grant NNF20OC0066298, and academist crowdfunding.

\section{Irreducibility of $W$ and the First Homology Group of $X \setminus W$}\label{irreducibility_homology_section}
In this section, we prove the irreducibility of $W$ for $n \geq 3$, and compute the first homology group of $X \setminus W$.
\subsection{Geometric descriptions of the wobbly locus}
Define
\begin{equation*}
    Z_n := \left\{ [x_0: x_1 : \cdots : x_{n+2}] \in \bP^{n+2}_{\bC}\ \middle\vert \ \sum_j \lambda_j^r x_j^2 = 0, \ r = 0, 1, 2, 3 \right\} \subset \bP^{n+2}_{\bC}.
\end{equation*}
That is, if we denote
\begin{equation*}
     Q_r := \left\{ [x_0: x_1 : \cdots : x_{n+2}] \in \bP^{n+2}_{\bC}\ \middle\vert \ \sum_{j=0}^{n+2} \lambda_j^r x_j^2 = 0 \right\}
\end{equation*}
for $r= 0, 1, 2, 3$, then $Z_n = Q_0 \cap Q_1 \cap Q_2 \cap Q_3 \subset \bP^{n+2}_{\bC}$.

\begin{Lem}\label{Z_irreducibility_lemma}
\begin{enumerate}
    \item[(1)]If $n = 1$, then $Z_1$ is empty.
    \item[(2)]If $n = 2$, then the scheme $Z_2$ consists of $16$ distinct reduced points. 
    \item[(3)]If $n \geq 3$, then the scheme $Z_n$ is a smooth, irreducible complete intersection of four quadrics of dimension $n-2$.
\end{enumerate}
\end{Lem}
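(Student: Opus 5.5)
The plan is to exploit the squaring map $\bsq$ once more. Write $y_j = x_j^2$. The four equations defining $Z_n$ are linear in $y$: $\sum_j \lambda_j^r y_j = 0$ for $r=0,1,2,3$. Since the $\lambda_j$ are pairwise distinct, the $4\times(n+3)$ Vandermonde-type matrix $(\lambda_j^r)$ has rank $\min(4,n+3)$, and any $4$ of its columns are linearly independent. Hence $L_n := \bsq(Z_n)$ is a linear subspace $\{\sum_j \lambda_j^r y_j = 0,\ r \le 3\}\subset \bP^{n+2}$ of dimension $n-2$ (empty if $n=1$), and $Z_n = \bsq^{-1}(L_n)$.

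\textbf{Case $n=1$.} We have $n+3=4$ columns, so the matrix is invertible. This forces $y=0$, so $Z_1=\emptyset$.

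\textbf{Smoothness for $n\ge 2$.} I will use the Jacobian criterion. The Jacobian of the four quadrics at $x$ is the matrix with entries $2\lambda_j^r x_j$. Suppose $x\in Z_n$ has at most $3$ nonzero coordinates. Then the corresponding $y$ is supported on at most $3$ indices, and since any $\le 4$ columns of the Vandermonde matrix are independent, this gives $y=0$, which is impossible. So every point of $Z_n$ has at least $4$ nonzero coordinates $x_j$. Choosing $4$ such indices gives a $4\times 4$ minor equal to $\prod x_j$ times a Vandermonde determinant, which is nonzero. Hence the Jacobian has rank $4$ everywhere on $Z_n$. Therefore $Z_n$ is a smooth complete intersection of dimension $(n+2)-4=n-2$, since it is nonempty: $\bsq$ is surjective and $L_n\ne\emptyset$ for $n\ge2$.

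\textbf{Case $n=2$.} Here $L_2$ is a single point $[y_0:\cdots:y_4]$, obtained by solving the $4\times5$ system. All $y_j\neq 0$, because otherwise $y$ would be supported on $4$ columns and hence vanish. The fiber of $\bsq$ over a point with all coordinates nonzero is a free $G$-orbit, so it has $|G|=2^4=16$ points. Smoothness of dimension $0$ means these are reduced. Alternatively, Bézout gives degree $2^4=16$, which combined with $16$ distinct points yields reducedness.

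\textbf{Case $n\ge3$ (irreducibility).} $Z_n$ is a smooth complete intersection of positive dimension $n-2\ge1$ in $\bP^{n+2}$. A complete intersection of positive dimension is connected: this follows from Hartshorne's connectedness, or from $H^0(\cO_{Z_n})=\bC$ via the Koszul resolution. A connected smooth variety is irreducible.

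The main obstacle is the smoothness check, specifically ruling out points with few nonzero coordinates. The Vandermonde independence argument above handles it, and the rest is standard.
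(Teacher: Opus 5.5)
Your proof is correct and follows essentially the same route as the paper. Like the paper, you realize $Z_n$ as the preimage under the squaring map of a codimension-four linear space, get smoothness from the Jacobian criterion via Vandermonde independence, and get irreducibility for $n\ge 3$ from Hartshorne's connectedness theorem. For smoothness you exhibit a nonzero $4\times 4$ minor directly, where the paper argues by contradiction with a cubic $h$ vanishing at the $\lambda_j$ with $x_j\neq 0$.

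The only other variation is in the case $n=2$. You check directly, by four-column Vandermonde independence, that the point $\Lambda_2$ has all coordinates nonzero; the paper instead infers from the reducedness of $Z_2$ that $\Lambda_2$ avoids the branch locus. Both arguments yield a free $G$-orbit of $16$ points.
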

\begin{proof}
    For $r = 0, 1, 2, 3$, set 
    \begin{equation*}
        F_r(x) := \sum_{j = 0}^{n+2} \lambda_j^r x_j^2.
    \end{equation*}
    Thus, $Z_n = V(F_0, F_1, F_2, F_3) \subset \bP^{n+2}_{\bC}$.
    Define the projective linear subspace
    \begin{equation*}
        \Lambda_n := \left\{ [y_0: y_1 : \cdots : y_{n+2}] \in \bP^{n+2}_{\bC}\ \middle\vert \ \sum_j \lambda_j^r y_j = 0, \ r = 0, 1, 2, 3 \right\} \subset \bP^{n+2}_{\bC}.
    \end{equation*}
    By construction, we have a scheme-theoretic equality $Z_n = \bsq^{-1}(\Lambda_n)$.
    The coefficient matrix of the four linear equations defining $\Lambda_n$ is
    \begin{equation*}
        V = \begin{pmatrix}
          1 & 1 & \cdots & 1\\
          \lambda_0 & \lambda_1 & \cdots & \lambda_{n+2}\\
          \lambda_0^2 & \lambda_1^2 & \cdots & \lambda_{n+2}^2\\
          \lambda_0^3 & \lambda_1^3 & \cdots & \lambda_{n+2}^3
        \end{pmatrix}.
    \end{equation*}
    Any four columns of $V$ form a Vandermonde matrix, whose determinant is $\prod_{1\leq k < \ell \leq 4}(\lambda_{j_{\ell}}-\lambda_{j_k})$.
    Since $\lambda_j$ are pairwise distinct, this determinant is nonzero.
    Hence, $V$ has rank $4$.

    Suppose first $n=1$.
    Then $V$ is an invertible $4 \times 4$ matrix.
    Hence, the four linear equations have no nonzero common solution, so $\Lambda_1 = \emptyset$.
    It follows that $Z_1 = \bsq^{-1}(\Lambda_1) = \emptyset$. This proves (1).

    We now assume that $n \geq 2$.
    Since the four linear forms are independent, $\Lambda_n \simeq \bP^{n-2}_{\bC}$.
    The coordinate-wise squaring morphism $\bsq$ is finite and surjective.
    Therefore, its restriction $\bsq|_{Z_n} \colon Z_n \ra \Lambda_n$ is also finite and surjective.
    In particular, $Z_n$ is nonempty and $\dim Z_n = \dim \Lambda_n = n-2$.

    We next prove that $Z_n$ is smooth.
    The Jacobian matrix of $F_0, F_1, F_2, F_3$ at $x \in Z_n$ is
    \begin{equation*}
        J(x) = 2\begin{pmatrix}
          x_0 & x_1 & \cdots & x_{n+2}\\
          \lambda_0 x_0& \lambda_1 x_1& \cdots & \lambda_{n+2} x_{n+2}\\
          \lambda_0^2 x_0& \lambda_1^2 x_1& \cdots & \lambda_{n+2}^2 x_{n+2}\\
          \lambda_0^3 x_0& \lambda_1^3 x_1& \cdots & \lambda_{n+2}^3 x_{n+2}
        \end{pmatrix}.
    \end{equation*}
    Assume that $J(x)$ has rank smaller than $4$ at some point $x \in Z_n$.
    Then there exist scalars $c_0, c_1, c_2, c_3$ not all zero, such that
    \begin{equation*}
        (c_0 + c_1 \lambda_j + c_2 \lambda_j^2 + c_3 \lambda_j^3) x_j = 0
    \end{equation*}
    for every $j$.
    Set
    \begin{equation*}
        h(t) := c_0 + c_1t + c_2 t^2 + c_3 t^3.
    \end{equation*}
    This is a nonzero polynomial of degree at most $3$.
    If $x_j \neq 0$, then $h(\lambda_j) = 0$.
    Since the $\lambda_j$ are pairwise distinct, at most three coordinates of $x$ can therefore be nonzero.
    Let $\{j_1, \dots, j_s \}$, $1 \leq s \leq 3$, be indices such that $x_{j_k} \ne 0$.
    The first $s$ equations defining $Z_n$ give 
    \[
        \begin{pmatrix}
         1 & \cdots & 1 \\
         \lambda_{j_1} & \cdots & \lambda_{j_s} \\
          \vdots & \ddots & \vdots \\
         \lambda_{j_1}^{\,s-1} & \cdots & \lambda_{j_s}^{\,s-1}
        \end{pmatrix}
        \begin{pmatrix}
          x_{j_1}^{\,2} \\
          x_{j_2}^{\,2} \\
          \vdots \\
          x_{j_s}^{\,2}
        \end{pmatrix}
         =
        \begin{pmatrix}
          0 \\
          0 \\
          \vdots \\
          0
        \end{pmatrix}.
       \]
    The matrix on the left is an invertible Vandermonde matrix. 
    Hence $x_{j_1}^2 = \cdots = x_{j_s}^2 = 0$, contradicting the fact that $x$ is a point of the projective space.
    Thus, the Jacobian $J(x)$ has rank $4$ at every point of $Z_n$.
    By the Jacobian criterion \cite[Exercise I.5.8]{Ha}, $Z_n$ is smooth.
    Moreover, $Z_n$ is scheme-theoretically defined by four quadrics and has codimension $4$.
    Consequently, the four quadrics form a regular sequence, and $Z_n$ is a complete intersection.

    Now, let $n = 2$.
    Then $\dim Z_2 = 0$, so $Z_2$ is a smooth zero-dimensional projective scheme.
    In particular, it is finite and reduced.
    Since a scheme-theoretic fiber of $\bsq$ over a point of its branch locus is nonreduced, the reducedness of $Z_2 = \bsq^{-1}(\Lambda_2)$ implies that the point $\Lambda_2 \simeq \bP^0_{\bC}$ lies outside the branch locus.
    Therefore, $Z_2$ consists of exactly $2^{2+2} = 16$ points.
    This proves (2).

    Finally, suppose that $n \geq 3$.
    Then $\dim Z_n = n-2 \geq 1$, so $Z_n$ is a positive-dimensional smooth complete intersection.
    By \cite[Theorem 3.4]{Har62}, $Z_n$ is connected, and so irreducible.
    This proves (3).
\end{proof}

For $v = (v_0, v_1, \dots, v_{n+2}) \in \bC^{n+3}$, set
\begin{equation*}
    q_v(z) := \sum_{j = 0}^{n+2} v_j \prod_{\ell \neq j} (z - \lambda_{\ell}), \ \ c_j := \prod_{\ell \neq j} (\lambda_j - \lambda_{\ell}) \neq 0
\end{equation*}
so that $q_v(\lambda_j) = v_jc_j$.
Let $M_r(v) := \sum_j \lambda_j^r v_j$ be the {\it{$r$-th moment}}, let $e_t(\lambda) = e_t(\lambda_0, \dots, \lambda_{n+2})$ denote the $t$-th elementary symmetric polynomial in $\lambda_0, \dots, \lambda_{n+2}$, and let $\bC[z]_{\leq n+2}$ denote the vector space of the polynomials of degree at most $n+2$.

\begin{Lem}\label{interpolation_lemma}
\begin{enumerate}
    \item[(1)] The map $v \mapsto q_v$ is a linear isomorphism from $\bC^{n+3}$ to $\bC[z]_{\leq n+2}$.
    \item[(2)] Let $m_j$ denote the coefficient of $z^j$ in $q_v(z)$.
    Then, for $0 \leq t \leq 3$,
    \begin{equation*}
        m_{n+2-t} = \sum_{s = 0}^t (-1)^{t+s}e_{t-s}(\lambda)M_s(v).
    \end{equation*}
    In particular, $\deg q_v \leq n-2$ if and only if 
    \begin{equation*}
        M_0(v) = M_1(v) = M_2(v) = M_3(v) = 0.
    \end{equation*}
\end{enumerate}
\end{Lem}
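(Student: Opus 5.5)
For (1), I will use Lagrange interpolation. Both spaces have dimension $n+3$, so it suffices to show injectivity. If $q_v = 0$, then evaluating at $z = \lambda_j$ gives $0 = q_v(\lambda_j) = v_j c_j$. Since $c_j \neq 0$ because the $\lambda_j$ are pairwise distinct, every $v_j$ vanishes. Each summand $\prod_{\ell\neq j}(z-\lambda_\ell)$ has degree $n+2$, so the map indeed lands in $\bC[z]_{\leq n+2}$, and (1) follows.

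For (2), I will expand each product using generating functions rather than coefficient bookkeeping. Set $P(z) := \prod_{\ell=0}^{n+2}(z-\lambda_\ell)$. Then $\prod_{\ell\neq j}(z-\lambda_\ell) = P(z)/(z-\lambda_j)$. Expanding in powers of $1/z$ for large $|z|$,
\begin{equation*}
    \frac{1}{z-\lambda_j} = \sum_{s\geq 0}\lambda_j^s z^{-s-1},
\end{equation*}
and summing against $v_j$ gives
\begin{equation*}
    q_v(z) = P(z)\sum_{s\geq 0} M_s(v) z^{-s-1}.
\end{equation*}
Next I write $P(z) = \sum_{k=0}^{n+3}(-1)^k e_k(\lambda) z^{n+3-k}$. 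The coefficient of $z^{n+2-t}$ in the product collects the pairs $(k,s)$ with $n+3-k-s-1 = n+2-t$, that is, $k = t-s$. This yields $m_{n+2-t} = \sum_{s=0}^t (-1)^{t-s}e_{t-s}(\lambda)M_s(v)$, and $(-1)^{t-s} = (-1)^{t+s}$. Since $q_v$ is a polynomial, its Laurent expansion agrees with the polynomial, so the coefficient identification is legitimate. The only delicate point is justifying the manipulation, either as an identity of formal Laurent series in $z^{-1}$ or by checking the finite expansion $\prod_{\ell\neq j}(z-\lambda_\ell) = \sum_{k}(-1)^k e_k(\lambda_{\hat j})z^{n+2-k}$ together with $e_k(\lambda_{\hat j}) = \sum_{s=0}^{k}(-1)^s\lambda_j^s e_{k-s}(\lambda)$. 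The latter follows from $e_k(\lambda) = e_k(\lambda_{\hat j}) + \lambda_j e_{k-1}(\lambda_{\hat j})$ by induction.

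For the "in particular" statement, $\deg q_v\leq n-2$ means $m_{n+2}=m_{n+1}=m_n=m_{n-1}=0$, i.e. $m_{n+2-t}=0$ for $t=0,\dots,3$. The formula expresses $(m_{n+2},\dots,m_{n-1})$ as a lower unitriangular matrix applied to $(M_0,\dots,M_3)$: the diagonal coefficient is $(-1)^{2t}e_0 = 1$. That matrix is invertible, so the vanishing of the four top coefficients is equivalent to $M_0=M_1=M_2=M_3=0$. I expect no real obstacle here; the main care is in the sign and index bookkeeping in (2).
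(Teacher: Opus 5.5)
Your proof is correct and is essentially the paper's argument: part (1) is identical, and your Laurent expansion $q_v(z)=P(z)\sum_{s\ge 0}M_s(v)z^{-s-1}$ is the paper's identity $e_t(\lambda\setminus\lambda_j)=\sum_{s=0}^{t}(-\lambda_j)^s e_{t-s}(\lambda)$ with the sum over $j$ taken before extracting coefficients (you also give that identity as your fallback). Your observation that the coefficients $(m_{n+2},\dots,m_{n-1})$ are obtained from $(M_0,\dots,M_3)$ by a unitriangular matrix explicitly justifies the ``in particular'' step, which the paper only asserts.
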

\begin{proof}
    (1) Since $v$ can be recovered from $q_v$ by $v_j = q_v(\lambda_j)/c_j$, the map $v \mapsto q_v$ is injective.
    Both its source and target have dimension $n + 3$, so it is an isomorphism between two vector spaces.

    (2) 
    Let $e_t(\lambda \setminus \lambda_j) := e_t(\lambda_0, \dots, \lambda_{j-1}, \lambda_{j+1}, \dots, \lambda_{n+2})$ denote the $t$-th elementary symmetric polynomial in all $\lambda_{\ell}$ except $\lambda_j$.
    For each $j$, by the standard generating function for elementary symmetric polynomials \cite[Chapter I, Section 2, equation (2.2)]{Mac95},
    \begin{equation*}
        \prod_{\ell \neq j}(z - \lambda_{\ell}) = \sum_{t=0}^{n+2}(-1)^t e_t(\lambda \setminus \lambda_j)z^{n+2-t}.
    \end{equation*}
    The standard generating function for elementary symmetric polynomials also gives us
    \begin{equation*}
        e_t(\lambda \setminus \lambda_j) = \sum_{s = 0}^t (- \lambda_j)^s e_{t-s}(\lambda).
    \end{equation*}
    Therefore,
    \begin{equation*}
        \begin{split}
            m_{n+2-t} &= (-1)^t \sum_{j=0}^{n+2}v_je_t(\lambda \setminus\lambda_j)\\
                      &= (-1)^t \sum_{j=0}^{n+2}v_j\sum_{s = 0}^t (- \lambda_j)^s e_{t-s}(\lambda)\\
                      &= \sum_{s=0}^t (-1)^{t+s}e_{t-s}(\lambda)\sum_{j=0}^{n+2}\lambda_j^s v_j\\
                      &=\sum_{s=0}^t (-1)^{t+s}e_{t-s}(\lambda)M_s(v).
        \end{split}
    \end{equation*}
    In particular, the four leading coefficients $m_{n+2}, m_{n+1}, m_n, m_{n-1}$ are related to $M_0(v), M_1(v), M_2(v), M_3(v)$.
    Consequently, $\deg q_v \leq n-2$ if and only if $ M_0(v) = M_1(v) = M_2(v) = M_3(v) = 0$.
\end{proof}

Recall that, for a projective point $x = [x_0 : x_1 : \cdots : x_{n+2}] \in \bP_{\bC}^{n+2}$, we have the polynomial $p_x(z)$ (as \zcref{poly_p}) defined as
\begin{equation*}
    p_x(z) := \sum_{j = 0}^{n+2} x_j^2 \prod_{l \neq j} (z - \lambda_l).
\end{equation*}
Applying Lemma \zcref{interpolation_lemma} to $v = (x_0^2, x_1^2, \dots, x_{n+2}^2) \in \bC^{n+3}$, we obtain
\begin{equation}\label{Z_degree_equiv}
    x \in Z_n \ \ \ \text{if and only if}\ \ \ \deg p_x \leq n-2.
\end{equation}
Moreover, if $x_j \neq 0$, then $p_x(\lambda_j) = x_j^2c_j \neq 0$. Therefore, $p_x \neq 0$.
We denote by $P_x^{[d]}(Z, T)$ with $[Z:T] \in \bP^1_{\bC}$ the homogenization of $p_x(z)$ with $\deg(p_x(z)) \leq d$  in general. 
\begin{Prop}\label{W_irreducible_prop}
    Assume $n \geq 2$.
    Define
     \begin{equation*}
         \nu_n \colon \bP^1_{\bC} \times Z_n \ra \bP^{n+2}_{\bC}
     \end{equation*}
    by
    \begin{equation*}
        \nu_n([a:b], [x_0: x_1 : \cdots : x_{n+2}]) = [(b\lambda_0 -a)x_0 : \cdots : (b \lambda_{n+2} - a)x_{n+2}].
    \end{equation*}
    Then, the image of $\nu_n$ is $W$.
    Consequently, if $n \geq 3$, then $W$ is irreducible.
\end{Prop}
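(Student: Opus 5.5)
The plan is to show the two inclusions $\nu_n(\bP^1_{\bC}\times Z_n)\subseteq W$ and $W\subseteq \nu_n(\bP^1_{\bC}\times Z_n)$, and then read off irreducibility from Lemma \zcref{Z_irreducibility_lemma}. Throughout, a point $x'\in X$ lies in $W$ exactly when the degree-$n$ homogenization $P^{[n]}_{x'}$ has a repeated root on $\bP^1_{\bC}$. The key identity is the following. If $x' = \nu_n([a:b],x)$, then $x_j'^2 = (b\lambda_j - a)^2 x_j^2$. Since the value at $\lambda_j$ determines the polynomial (Lemma \zcref{interpolation_lemma}(1)), we get
\begin{equation*}
 q_{v'}(z) = (bz-a)^2\, p_x(z) + (\text{multiple of } \textstyle\prod_\ell (z-\lambda_\ell)),
\end{equation*}
where $v'=(x_0'^2,\dots,x_{n+2}'^2)$. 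Degree considerations kill the correction term: $\deg p_x\le n-2$ by \zcref{Z_degree_equiv}, so the right side minus the correction has degree at most $n$, while the product has degree $n+3$. Hence $p_{x'}(z)=(bz-a)^2p_x(z)$ exactly, as both sides have degree at most $n+2$ and agree at the $n+3$ points $\lambda_j$.

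For the first inclusion, take $x\in Z_n$ and $[a:b]$ with $x'\neq 0$. Then $\deg p_{x'}\le n$, so by Lemma \zcref{interpolation_lemma}(2) $x'$ satisfies $M_0=M_1=0$, i.e.\ $x'\in X$. Its degree-$n$ homogenization is $(bZ-aT)^2P^{[n-2]}_x$, which has a double root at $[a:b]$; hence $x'\in W$. Since $p_x\neq 0$, this covers the case $b=0$ (a double root at $\infty$, i.e.\ $\deg p_{x'}\le n-2$) as well.

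I also need $x'$ to be a genuine projective point. Vanishing of every $(b\lambda_j-a)x_j$ forces $x$ to be supported at a single index $j$, and that contradicts $F_0(x)=0$. So $\nu_n$ is a morphism.

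For the reverse inclusion, let $x'\in W$, so $P^{[n]}_{x'}$ has a double root $[a:b]$. Write $P^{[n]}_{x'}=(bZ-aT)^2 R$ with $R$ of degree $n-2$, and dehomogenize to $r(z)$ with $\deg r\le n-2$. I then need $x\in\bP^{n+2}_{\bC}$ with $(b\lambda_j-a)x_j=x'_j$ for all $j$ and $p_x=r$.

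\begin{itemize}
\item Case $[a:b]\ne[\lambda_j:1]$ for all $j$. Set $x_j = x'_j/(b\lambda_j-a)$. Then $p_x(\lambda_j)=x_j^2c_j = p_{x'}(\lambda_j)/(b\lambda_j-a)^2 = r(\lambda_j)$. Since $\deg r\le n-2<n+3$, interpolation gives $p_x = r$, so $\deg p_x\le n-2$ and $x\in Z_n$ by \zcref{Z_degree_equiv}.
\item Case $[a:b]=[\lambda_k:1]$. A double root at $\lambda_k$ forces $x'_k=0$, since $p_{x'}(\lambda_k)=x_k'^2c_k$. For $j\neq k$, define $x_j$ as in the first case. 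Choose $x_k$ with $x_k^2 = r(\lambda_k)/c_k$, which is possible over $\bC$. The same interpolation argument gives $p_x=r$, and $x\neq 0$ because $x'\neq 0$.
\end{itemize}

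This reverse inclusion is the step I expect to need the most care, specifically the branch-point case and checking $x\neq0$. With it, the image of $\nu_n$ equals $W$ as sets.

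For irreducibility with $n\ge3$: Lemma \zcref{Z_irreducibility_lemma}(3) says $Z_n$ is irreducible, so $\bP^1_{\bC}\times Z_n$ is irreducible. $W$ is the image of an irreducible variety under a morphism; it is closed because the source is projective, and it is reduced by definition. Hence $W$ is irreducible.
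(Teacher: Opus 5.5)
Your proof is correct and follows essentially the same route as the paper: well-definedness from $\sum_j x_j^2=0$, the interpolation identity $p_{\nu_n([a:b],x)}(z)=(bz-a)^2p_x(z)$ giving the double root at $[a:b]$, factoring off a double root and interpolating back for the reverse inclusion, and irreducibility of $W$ as the closed image of the irreducible variety $\mathbb{P}^1\times Z_n$. The differences are minor. You get membership in $X$ from the top-coefficient moment formula ($\deg p_{x'}\le n$ forces $M_0=M_1=0$), whereas the paper expands $\sum_j(b\lambda_j-a)^2\lambda_j^r x_j^2$ directly. You also build $x$ by dividing the coordinates of $x'$ by $b\lambda_j-a$, taking a square root only at a branch index, whereas the paper takes square roots of $R(\lambda_j,1)/c_j$ and then adjusts signs.
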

\begin{proof}
    We first check that $\nu_n$ is well-defined.
    Since $\lambda_j$ are pairwise distinct, for a fixed point $[a:b] \in \bP^1_{\bC}$, at most one of the numbers $b\lambda_j - a$ can vanish.
    Suppose that all the coordinates $(b\lambda_j-a)x_j$ were zero.
    Then, $x_j = 0$ for all but possibly one index.
    But $x \in Z_n$ satisfies $\sum x_j^2 = 0$.
    The point $x \in Z_n$ with at most one nonzero coordinate cannot satisfy this equation unless $x_j = 0$ for all $j$.
    This contradicts the fact that $x$ lies in projective space.
    Therefore, $\nu_n$ is well-defined.

    Next, we will show that the image of $\nu_n$ is contained in $W$.
    Let 
    \begin{equation*}
        w = \nu_n([a:b], x), \ \ w_j = (b\lambda_j-a)x_j.
    \end{equation*}
    Since $x \in Z_n$, one has $\sum \lambda_j^r x_j^2 = 0$ for $r = 0, 1, 2, 3$.
    It follows that
    \begin{equation*}
        \begin{split}
            \sum_jw_j^2 &= \sum_j (b\lambda_j - a)^2x_j^2\\
                        &= b^2\sum_j \lambda_j^2 x_j^2 -2ab \sum_j \lambda_j x_j^2 + a^2 \sum_j x_j^2\\
                        &= 0.
        \end{split}
    \end{equation*}
    Similarly,
     \begin{equation*}
        \begin{split}
            \sum_j \lambda_jw_j^2 &= \sum_j  \lambda_j (b\lambda_j - a)^2x_j^2\\
                        &= b^2\sum_j \lambda_j^3 x_j^2 -2ab \sum_j \lambda_j^2 x_j^2 + a^2 \sum_j \lambda_j x_j^2\\
                        &= 0.
        \end{split}
    \end{equation*}
    Thus $w \in X$.
    In particular, $\deg p_w(z) \leq n$.
    Since $x \in Z_n$, by \zcref{Z_degree_equiv}, Lemma \zcref{interpolation_lemma} tells us $\deg p_x \leq n-2$.
    For every $j$, we have
    \begin{equation*}
        p_w(\lambda_j) = w_j^2c_j = (b\lambda_j-a)^2 x_j^2c_j = (b\lambda_j-a)^2p_x(\lambda_j).
    \end{equation*}
    Hence, two polynomials $p_w(z)$ and $(bz-a)^2p_x(z)$ agree at the $n+3$ distinct points $\lambda_0, \lambda_1, \dots, \lambda_{n+2}$.
    Both polynomials have degree at most $n$, so they must be equal. That is
    \begin{equation*}
        p_w(z) = (bz-a)^2p_x(z).
    \end{equation*}
    After homogenizing, we obtain
    \begin{equation*}
        P_w^{[n]}(Z, T) = (bZ -a T)^2P_x^{[n-2]}(Z, T).
    \end{equation*}
    Thus, $P_w^{[n]}$ has the multiple root $[a:b]$.
    Therefore, $f(w)$ lies in the discriminant hypersurface $\Delta$, and hence $w \in f^{-1}(\Delta)_{red} = W$.
    This proves $\nu_n(\bP^1_{\bC} \times Z_n) \subseteq W$.

    Finally, we will check that every point of $W$ lies in the image.
    Let $w \in W$.
    Then, the non-zero degree-$n$ homogeneous polynomial $P_w$ has a multiple root.
    Choose one such root $[a:b] \in \bP^1_{\bC}$.
    There is a degree-$(n-2)$ homogeneous polynomial $R(Z, T)$ such that
    \begin{equation*}
        P_w^{[n]}(Z, T) = (bZ-aT)^2R(Z,T).
    \end{equation*}
    For each $j$, define $x_j^2$ by
    \begin{equation*}
        x_j^2 := \frac{R(\lambda_j, 1)}{c_j}.
    \end{equation*}
    Since the base field is $\bC$, we may choose a square root $x_j$ of each of these numbers.
    The vector $(x_0, x_1, \dotsm x_{n+2}) \in \bC^{n+3}$ is nonzero.
    Indeed, if all $x_j$ were zero, then $R(\lambda_j, 1) = 0$ for all $j$.
    But the polynomial $R(z, 1)$ has degree at most $n-2$, and would vanish at the $ n+3$ distinct points $\lambda_j$, forcing $R = 0$, a contradiction.
    By construction, 
    \begin{equation*}
        p_x(\lambda_j) = x_j^2c_j = R(\lambda_j, 1).
    \end{equation*}
    The polynomial $p_x$ has degree at most $n+2$, while $R(z, 1)$ has degree at most $n-2$.
    Since they agree at $n+3$ distinct points, they are equal. That is
    \begin{equation*}
        p_x(z) = R(z, 1).
    \end{equation*}
    Therefore, $\deg p_x \leq n-2$.
    By \zcref{Z_degree_equiv}, Lemma \zcref{interpolation_lemma} implies $x \in Z_n$.
    Finally, evaluating the factorization of $P_w^{[n]}$ at $[\lambda_j:1]$, we obtain
    \begin{equation*}
        w_j^2c_j = P_w^{[n]}(\lambda_j, 1) = (b\lambda_j-a)^2R(\lambda_j, 1) = (b\lambda_j-a)^2x_j^2c_j.
    \end{equation*}
    Since $c_j \neq 0$, $w_j^2 = (b\lambda_j-a)^2x_j^2$.
    For every $j$ with $b\lambda_j-a \neq0$, we may choose the sign of $x_j$ so that
    \begin{equation*}
        w_j = (b\lambda_j-a)x_j.
    \end{equation*}
    If $b\lambda_j -a = 0$, then $w_j = 0$, and the same formula holds automatically.
    Choosing the signs of $x_j$ does not affect membership of $Z_n$, since its defining equations involve only the squares $x_j^2$.
    Thus
    \begin{equation*}
        w = \nu_n([a:b], x).
    \end{equation*}
    Every closed point of $W$ therefore lies in the image of $\nu_n$.
    Since $\nu_n$ is projective, its image is closed, and hence $\nu_n(\bP^1 \times Z_n) = W$.

    Assume $n \geq 3$.
    By Lemma \zcref{Z_irreducibility_lemma} (3), $Z_n$ and $\bP^1_{\bC} \times Z_n$ are irreducible.
    Therefore, $W = \nu_n(\bP^1_{\bC} \times Z_n)$ is also irreducible.
\end{proof}

\begin{Prop}\label{X_and_W}
    \begin{enumerate}
        \item[(1)] If $n = 1$, then $X$ is an elliptic curve in $\bP^3_{\bC}$, and $W$ is empty.
        \item[(2)] If $n = 2$, then $X$ is a degree $4$ del Pezzo surface $dP_5$ in $\bP^4_{\bC}$, and $W$ is the union of $16$ $(-1)$-curves.
        \item[(3)] If $n \geq 3$, then $W$ is an irreducible divisor on $X$, and $[W] = 4(n-1)[H]$ with $H = c_1(\cO_X(1))$.
    \end{enumerate}
\end{Prop}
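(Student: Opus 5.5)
Parts (1) and (2) follow quickly from the earlier results, and part (3) reduces to a pullback computation for the class of $W$.

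For (1), $X$ is a smooth complete intersection of two quadrics in $\bP^3_{\bC}$, so by adjunction $K_X = \cO_X(-4+2+2) = \cO_X$. Hence $X$ is a smooth curve of genus $1$. For the wobbly locus, Proposition \zcref{W_irreducible_prop} is stated only for $n\ge 2$, so I will argue directly. For $n=1$ the polynomial $p_x$ has degree at most $1$, so its homogenization $P^{[1]}_x$ is a nonzero linear form on $\bP^1_{\bC}$. Such a form can never have a repeated root, so $\Delta=\emptyset$ in $\Sym^1(\bP^1_{\bC})$ and therefore $W=\emptyset$.

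For (2), adjunction gives $K_X=\cO_X(-5+4)=\cO_X(-1)$, so $X$ is a smooth del Pezzo surface of degree $H^2=4$. By Proposition \zcref{W_irreducible_prop}, $W$ is the image of $\nu_2$. By Lemma \zcref{Z_irreducibility_lemma}(2), the source $\bP^1_{\bC}\times Z_2$ is a disjoint union of $16$ copies of $\bP^1_{\bC}$. For fixed $x\in Z_2$, the map $[a:b]\mapsto \nu_2([a:b],x)$ is given by coordinates that are linear in $(a,b)$, and it is injective; injectivity holds because, by the proof of Lemma \zcref{Z_irreducibility_lemma}, at least four coordinates $x_j$ are nonzero. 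Its image is therefore a line $L_x\subset X$. Distinct points of $Z_2$ give distinct lines: the line determines the double root $[a:b]$ and hence recovers $x$ up to scalar via $R$. On a smooth quartic del Pezzo surface every line $L$ satisfies $L^2=-1$, since $-K_X\cdot L=1$ and $L\cong\bP^1$. There are exactly $16$ such lines, so $W$ is the union of all $16$ $(-1)$-curves.

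For (3), irreducibility is Proposition \zcref{W_irreducible_prop}. For the class, I first compute the degree of $\Delta\subset\Sym^n(\bP^1_{\bC})\cong\bP^n_{\bC}$: the discriminant of a binary form of degree $n$ has degree $2(n-1)$ in the coefficients. Since $f=\bsq|_X$ is the restriction of a degree-$2$ map, $f^*\cO_{\bP^n}(1)=\cO_X(2)$. Hence $f^*\Delta$ has class $4(n-1)H$ as a divisor, and in particular $W$ is a divisor.

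The main obstacle is to show that $f^*\Delta$ is reduced, so that $[W]=[f^*\Delta]$. Because $W$ is irreducible, it suffices to check reducedness at one general point of $W$, and I would choose a point lying off the ramification locus $R$. Take a general $w=\nu_n([a:b],x)$ with all $w_j\neq0$. Near $w$ the map $f$ is étale, since $w\notin R$. Near $f(w)$, the point corresponds to a divisor with exactly one double root and all other roots simple, and there $\Delta$ is a reduced hypersurface. Its pullback under an étale map is then reduced, giving multiplicity $1$.

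To see that such a general $w$ exists, I would argue by dimension. Since $\dim W=n-1=\dim(\bP^1_{\bC}\times Z_n)$, the map $\nu_n$ is generically finite. The locus in $\Delta$ of divisors with a double root at some $\lambda_j$, or with worse degeneracies, has dimension at most $n-2$. Moreover $W\not\subset R_j$, because for general $x\in Z_n$ and $[a:b]$ all $w_j\neq0$. Finally, the projective degree is $\deg W = 4(n-1)H\cdot H^{n-1} = 4(n-1)\cdot\deg X = 16(n-1)$.
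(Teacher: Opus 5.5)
Your route is the paper's. In (2) you take $W=\nu_2(\bP^1_{\bC}\times Z_2)$, get one line per point of $Z_2$, and use adjunction for $L^2=-1$. In (3) you pull back the degree $2(n-1)$ discriminant via $f^*\cO_{\bP^n_{\bC}}(1)\simeq\cO_X(2)$, and get multiplicity one from \'{e}taleness of $f$ off $R$. The paper goes further and shows $f^{-1}(\Delta)$ is reduced everywhere via $(R_0)+(S_1)$; for the class $[W]$ your generic check is enough.

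However, the step showing the $16$ lines are distinct fails as written. It cannot go ``via $R$'': $Z_2=\bsq^{-1}(\Lambda_2)$ with $\Lambda_2$ a single point, so all sixteen points of $Z_2$ have the same $[x_0^2:\cdots:x_4^2]$ and hence the same $R$. They differ only by signs, and the signs are exactly what must be detected. Also, a line does not determine one double root; each of its points does.

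A correct version of your idea goes as follows. Suppose $L_x=L_{x'}$. Choose $w=\nu_2([a:b],x)$ on it with $[a:b]\neq[\lambda_j:1]$ for all $j$, and write $w=\nu_2([a':b'],x')$. Since $p_x$ and $p_{x'}$ are nonzero constants, $P^{[2]}_w$ is a nonzero multiple of both $(bZ-aT)^2$ and $(b'Z-a'T)^2$, so $[a:b]=[a':b']$. Then $x_j$ and $x'_j$ are both proportional to $w_j/(b\lambda_j-a)$, with constants independent of $j$, so $x=x'$ in $\bP^4_{\bC}$. The paper instead compares the points $L\cap\{x_m=0\}$.

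In (3) you also assert without proof that a general $x\in Z_n$ has all $x_j\neq 0$, which you need for $W\not\subset R_j$. This is true but needs a reason. For example, $Z_n\cap\{x_j=0\}=\bsq^{-1}(\Lambda_n\cap\{y_j=0\})$ has dimension $n-3<\dim Z_n$. This holds because the $j$-th coordinate vector is not in the row space of the Vandermonde matrix $V$: a polynomial of degree at most $3$ vanishing at the other $n+2$ values $\lambda_\ell$ is zero. Irreducibility of $Z_n$ then gives a general point off all coordinate hyperplanes.

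The paper's argument avoids this. Since $f(W)=\Delta$ and $W$ is irreducible, the generic point of $W$ lies over the generic point of $\Delta$. That point lies on no $B_j$, so $f$ is \'{e}tale there. This also makes your detour through generic finiteness of $\nu_n$ and the dimension count of degenerate strata of $\Delta$ unnecessary.
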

\begin{proof}
    The descriptions of $X$ in (1) and (2) are classical results (for (2), see \cite[p.550]{GH78}).

    Suppose $n = 1$.
    In this case, $\Sym^1(\bP^1_{\bC}) = \bP^1_{\bC}$ has empty discriminant locus.
    Therefore, $W = f^{-1}(\Delta)_{red} = \emptyset$.
    This proves (1).

    Next, assume $n = 2$.
    By Lemma \zcref{Z_irreducibility_lemma} (2), $Z_2$ consists of $16$ distinct reduced points.
    Therefore, we can write $Z_2 = \{z^{(1)}, \dots, z^{(16)}\}$ with the condition $z^{(i)} \neq z^{(j)}$ for $i \neq j$.
    Since $z^{(i)}\in Z_2$, \zcref{Z_degree_equiv} tells us $\deg p_{z^{(i)}} = 0$.
    Moreover, $p_{z^{(i)}} \neq 0$.
    Indeed, for $z^{(i)} := [z_0^{(i)}:z_1^{(i)}:z_2^{(i)}:z_3^{(i)}:z_4^{(i)}]$, set $v_i := ((z_0^{(i)})^2, (z_1^{(i)})^2, (z_2^{(i)})^2, (z_3^{(i)})^2, (z_4^{(i)})^2) \neq 0$.
    Then, $p_{z^{(i)}} = q_{v_i} \neq 0$ because $v \mapsto q_v$ is injective by Lemma \zcref{interpolation_lemma} (1).
    Hence, $p_{z^{(i)}}$ is nonzero constant.
    Evaluating at $\lambda_j$, we obtain $p_{z^{(i)}}(\lambda_j) = (z_j^{(i)})^2c_j \neq 0$.
    Therefore, $z_j^{(i)} \neq 0$ for $j = 0, \cdots, 4$.
    The five coordinates of $\nu_i := \nu_2|_{\bP^1_{\bC} \times \{z^{(i)} \}}$ are sections of $\cO_{\bP^1_{\bC}}(1)$.
    Any two of them, say $(b\lambda_k -a)z_k^{(i)}$ and $(b\lambda_{\ell} -a)z_{\ell}^{(i)}$, are linearly independent when $k \neq \ell$.
    Hence, these sections span $H^0(\bP^1_{\bC}, \cO_{\bP^1_{\bC}}(1))$.
    It follows that $\nu_i$ is a closed immersion, and its image $L_i := \nu_i(\bP^1_{\bC} \times \{z^{(i)} \})$ is a line in $X$.

    We need to show that different points of $Z_2$ give different lines.
    For $m = 0, 1, 2, 3, 4$, let 
    \begin{equation*}
        H_m := \{x_m = 0 \} \subset \bP^4_{\bC}
    \end{equation*}
    be the $m$-th coordinate hyperplane.
    Since $z_j^{(i)} \neq 0$, the line $L_i$ is not contained in any $H_m$, and its unique intersection with $H_m$ is 
    \begin{equation*}
        L_i \cap H_m = \nu_i([\lambda_m:1]) = [(\lambda_0-\lambda_m)z_0^{(i)}: \cdots:0:\cdots:(\lambda_4-\lambda_m)z_4^{(i)}].
    \end{equation*}
    Suppose $L_k = L_{\ell}$.
    Then, their intersections with $H_m$ are equal.
    Hence, for each $m = 0, \dots, 4$, there exists $c_m\in \bC^{\times}$ such that
    \begin{equation*}
        z_j^{(k)} = c_m z_j^{(\ell)}
    \end{equation*}
    for every $j \neq m$.
    Taking $m = 0, 1$, and comparing coordinate $j = 2$ gives $c_0 = c_1 =:c$.
    The relation for $m = 0$ gives $z_j^{(k)} = c z_j^{(\ell)}$ for $j = 1, 2, 3, 4$, while the relation for $m = 1$ also gives $z_0^{(k)} = c z_0^{(\ell)}$.
    Thus, $L_k = L_{\ell}$ implies $z^{(k)} = z^{(\ell)}$ in $Z_2$.
    Consequently, the $16$ points of $Z_2$ determine $16$ distinct lines.
    Since $\nu_2$ is surjective,
    \begin{equation*}
        W = \nu_2(\bP^1_{\bC} \times Z_2) = \bigcup_{i = 1}^{16}L_i.
    \end{equation*}
    Finally, since $X \subset \bP^4_{\bC}$ is the anti-canonical embedding, $-K_X = \cO_X(1)$.
    For each line $L_i \simeq \bP^1_{\bC}$, $(-K_X).L_i = 1$.
    Therefore, the adjunction formula gives $(L_i)^2 = -1$.
    Hence, the $16$ lines are precisely $16$ distinct $(-1)$-curves on $X = dP_5$.
    This proves (2).
    
    (3) Suppose $n \geq 3$.
    By Proposition \zcref{W_irreducible_prop}, $W$ is an irreducible divisor in $X$.
    First, let us prove that $W = f^{-1}(\Delta)_{red} = f^{-1}(\Delta) := \Delta \times_{\bP^n_{\bC}}X$ scheme theoretically.
    Since $\Delta$ is a hypersurface in $\bP^n_{\bC}$, and $f(X) \not\subset \Delta$, the scheme $f^{-1}(\Delta)$ is an effective Cartier divisor on the smooth variety $X$, with support $W$.
    The finite morphism $f|_W$ maps $W$ to $\Delta$, since this is the reduction of the morphism $f^{-1}(\Delta) \ra \Delta$ that is the base change of the finite surjective morphism $f\colon X \ra \bP^n_{\bC}$.
    The generic point of $\Delta$ avoids all branch loci $B_j$ and $f$ is \'{e}tale over $\bP^n_{\bC} \setminus \cup_jB_j$.
    Hence, $f$ is \'{e}tale at the generic point of $W$.
    It follows that $f^{-1}(\Delta)$ is reduced at its unique generic point, and therefore generically reduced.
    Since an effective Cartier divisor on a smooth variety is Cohen--Macaulay, $f^{-1}(\Delta)$ satisfies Serre's $(S_1)$ condition.
    The conditions $(R_0)+(S_1)$ imply that $f^{-1}(\Delta)$ is reduced  \cite[\href{https://stacks.math.columbia.edu/tag/031R}{Tag 031R}]{stacks-project}.
    Thus, $W = f^{-1}(\Delta) := \Delta \times_{\bP^n_{\bC}}X$ scheme theoretically.
    
    Since $\deg \Delta = 2(n-1)$ in $\bP^n_{\bC}$ and $f^*\cO_{\bP^n_{\bC}}(1) \simeq \cO_X(2)$ with $H = c_1(\cO_X(1))$, we have $[W] = 4(n-1)[H]$.
    This proves (3).
\end{proof}

\begin{Rem}
{\rm
 \begin{enumerate}
     \item Proposition \zcref{X_and_W} (3) agrees with \cite[Section 5.1.2]{PP21} when $n = 3$.
     \item Donagi--Pantev identified $W$ with the union of $16$ $(-1)$-curves when $n = 2$ \cite[Theorem C]{DP}.
 \end{enumerate}
}
\end{Rem}

\begin{Thm}\label{first_homology_group}
\begin{enumerate}
    \item[(1)] If $n = 1$, then $H_1 (X \setminus W, \bZ) = H_1(X,\bZ) \simeq \bZ^2$.
    \item[(2)] If $n = 2$, then $H_1(X \setminus W, \bZ) \simeq \bZ^{10}$.
    \item[(3)] If $n \geq 3$, then $H_1(X \setminus W, \bZ) \simeq \bZ/4(n-1)\bZ$.
\end{enumerate}
\end{Thm}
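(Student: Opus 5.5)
The plan is to compute $H_1(X\setminus W,\bZ)$ by Poincaré--Lefschetz duality and the long exact sequence of the pair $(X,W)$; the answer then reduces to the intersection numbers of $H_2(X,\bZ)$ with the irreducible components of $W$. Case (1) is immediate from Proposition \zcref{X_and_W} (1): $W=\emptyset$, and $X$ is a complex torus of real dimension $2$, so $H_1(X,\bZ)\simeq\bZ^2$.

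For $n\geq 2$, I would write $W=W_1\cup\cdots\cup W_k$ for the decomposition into irreducible components, each of complex dimension $n-1$. Since $X$ is a compact oriented $2n$-manifold and $W$ is a closed subcomplex, Lefschetz duality gives $H_1(X\setminus W,\bZ)\simeq H^{2n-1}(X,W;\bZ)$. Consider the exact sequence
\begin{equation*}
H^{2n-2}(X)\ra H^{2n-2}(W)\ra H^{2n-1}(X,W)\ra H^{2n-1}(X).
\end{equation*}
\begin{itemize}
\item The right-hand term satisfies $H^{2n-1}(X)\simeq H_1(X)=0$ for $n\geq2$. For $n=2$ this holds because $X$ is a rational surface; for $n\geq 3$ it follows from the Lefschetz hyperplane theorem for smooth complete intersections.
\item For the second term, since $W$ is a compact complex variety of pure dimension $n-1$ (possibly singular), $H^{2n-2}(W,\bZ)\simeq\bZ^{k}$, generated by the fundamental classes of the $W_i$.
\item The restriction map $H^{2n-2}(X)\simeq H_2(X)\ra\bZ^k$ is $\alpha\mapsto(\alpha\cdot W_i)_i$.
\end{itemize}
Hence
\begin{equation*}
H_1(X\setminus W,\bZ)\simeq\operatorname{coker}\bigl(H_2(X,\bZ)\ra\bZ^k\bigr).
\end{equation*}
The step I expect to need the most care is justifying these duality identifications cleanly when $W$ is singular. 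One option is to use a tubular neighbourhood or regular neighbourhood of $W$. Another is to quote the standard fact that $H_1$ of a divisor complement in a simply connected smooth projective variety is generated by meridians, with relations given by intersection numbers with $H_2$.

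For $n=2$, by Proposition \zcref{X_and_W} (2), $k=16$ with $W_i=L_i$ the $(-1)$-curves, and $H_2(X,\bZ)\simeq\Pic(X)\simeq\bZ^6$ carries a unimodular intersection form. The map $\bZ^6\ra\bZ^{16}$, $\alpha\mapsto(\alpha\cdot L_i)_i$, is the composite of the unimodular isomorphism $\Pic(X)\ra\Pic(X)^\vee$ with the transpose of $\bZ^{16}\ra\Pic(X)$, $e_i\mapsto[L_i]$. The latter is surjective, since the $16$ lines generate $\Pic$ of a degree $4$ del Pezzo surface; for instance $E_1,\dots,E_5$ and a line of class $H-E_1-E_2$ already generate it. The transpose of a surjection of free modules is a split injection, so the cokernel is free of rank $16-6=10$, giving $\bZ^{10}$.

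For $n\geq3$, $W$ is irreducible by Proposition \zcref{X_and_W} (3), so $k=1$. The Lefschetz hyperplane theorem for the complete intersection $X$ gives $H^2(X,\bZ)=\bZ H$, with $H^3$ and $H_1$ torsion-free and zero. By Poincaré duality, $H_2(X,\bZ)\simeq\bZ$ is generated by a class $\ell$ with $H\cdot\ell=1$. The map $H_2(X)\ra\bZ$ sends $\ell\mapsto \ell\cdot W=4(n-1)(H\cdot\ell)=4(n-1)$, using $[W]=4(n-1)[H]$. Its cokernel is $\bZ/4(n-1)\bZ$.
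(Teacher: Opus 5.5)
Your proposal is correct. Part (1) matches the paper. For $n\geq 3$ your argument is the Lefschetz-dual form of the paper's. The paper uses the homology sequence of $(X,X\setminus W)$, discards $\Sing(W)$ (codimension $\geq 2$), and obtains $H_2(X,X\setminus W)\simeq H_0(W_{reg})\simeq\bZ$. You instead use $H_1(X\setminus W)\simeq H^{2n-1}(X,W)$ together with $H^{2n-2}(W)\simeq\bZ^k$. The singularities you worry about are handled by the same codimension count: $\Sing(W)$ has real dimension at most $2n-4$, hence $H^{2n-2}(W)\simeq H^{2n-2}_c(W_{reg})\simeq H_0(W_{reg})$.

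The genuine difference is $n=2$. The paper blows down five disjoint lines to identify $X\setminus W$ with the complement in $\bP^2_{\bC}$ of ten lines and a conic, then quotes Dimca's formula for $H_1$ of plane curve complements. You stay on $X$: $H_2(X,\bZ)\simeq\Pic(X)\simeq\bZ^6$ is unimodular and generated by the sixteen lines, so the intersection map $\bZ^6\to\bZ^{16}$ is a split injection. Your route gives the single formula $H_1(X\setminus W)\simeq\operatorname{coker}\bigl(H_2(X)\to\bZ^k\bigr)$ whenever $H_1(X)=0$, covering $n=2$ and $n\geq 3$ uniformly. The paper's route is shorter given Dimca's result and yields an explicit plane model of $X\setminus W$.

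One wording fix: for $n=3$ one has $H^3(X,\bZ)\simeq\bZ^4\neq 0$, so only its torsion-freeness holds. Your computation only needs $H_1(X)=0$ and $\langle H,\ell\rangle=1$, and both do hold.
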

\begin{proof}
    (1) Suppose $n = 1$. 
    Then, the statement follows from Proposition \zcref{X_and_W} (1). 

    (2) Assume $n = 2$.
    By Proposition \zcref{X_and_W} (2), we know that $X$ is a degree $4$ del Pezzo surface $dP_5$ in $\bP^4_{\bC}$, and $W$ is the union of $16$ $(-1)$-curves.
    Blowing down the five exceptional curves identifies
    \begin{equation*}
        X \setminus W \simeq \bP^2_{\bC} \setminus \left( \bigcup_{1\leq i < j \leq 5}L_{ij} \cup Q \right),
    \end{equation*}
    where the $L_{ij}$ are ten lines and $Q$ is the conic through the five blowing-down points.
    By \cite[Section 4, Proposition (1.3)]{Dimca92}, for a reduced plane curve $C$ with irreducible components $C_1, \dots, C_r$ of degree $d_1, \dots, d_r$, one has
    \begin{equation*}
        H_1(\bP^2_{\bC} \setminus C, \bZ) \simeq \bZ^{r-1} \oplus \bZ/\gcd(d_1, \dots, d_r)\bZ.
    \end{equation*}
    In the present case, $r = 11$ and the component degrees are $1, \cdots, 1, 2$.
    Hence, $H_1(X \setminus W, \bZ) \simeq \bZ^{10}$.

    (3) Since $X \subset \bP^{n+2}_{\bC}$ is a smooth complete intersection of dimension $n \geq 3$, the weak Lefschetz theorem gives
    \begin{equation*}
        H_k(X, \bZ) \simeq H_k(\bP^{n+2}_{\bC}, \bZ)
    \end{equation*}
    for $k < n$.
    In particular, 
    \begin{equation*}
        H_1(X, \bZ) = 0, \ \ H_2(X, \bZ) \simeq \bZ.
    \end{equation*}
    Moreover, 
    \begin{equation*}
        H^2(\bP^{n+2}_{\bC}, \bZ) \xrightarrow{\sim} H^2(X, \bZ)  ,
    \end{equation*}
    so the hyperplane class $H = c_1(\cO_X(1))$ is a primitive generator.
    The Grothendieck--Lefschetz theorem for Picard groups yields $\Pic(X) = \bZ[\cO_X(1)]$.

    Let us compute $H_1(X \setminus W, \bZ)$.
    $W$ may have singular locus $\Sing(W)$.
    Set $X^{\circ} := X \setminus \Sing(W)$, $W_{reg} := W \setminus \Sing(W)$.
    Then $W_{reg}$ is a closed connected smooth divisor in $X^{\circ}$, and $X^{\circ} \setminus W_{reg} = X \setminus W$.
    Since $W$ is reduced, $\Sing(W)$ has complex codimension at least two in $X$, and hence $H_j(X, X^{\circ}, \bZ) = 0$ for $j \leq 3$.
    The exact sequence of the triple $X \setminus W \subset X^{\circ} \subset X$ now identifies
    \begin{equation*}
        H_2(X, X\setminus W, \bZ) \simeq H_2(X^{\circ}, X\setminus W, \bZ) \simeq H_0(W_{reg}, \bZ) \simeq \bZ.
    \end{equation*}
    Since $W$ is an irreducible divisor of class $4(n-1)[H]$ in $X$ by Proposition \zcref{X_and_W} (3), the homology long exact sequence for the pair $(X, X \setminus W)$ gives
    \begin{equation*}
        H_2(X, \bZ) \ra H_2(X, X \setminus W, \bZ) \simeq \bZ \ra H_1(X \setminus W, \bZ) \ra 0.
    \end{equation*}
    The first map is intersection with $[W] = 4(n-1)H$, hence multiplication by $4(n-1)$ after primitive generators are chosen.
    The cokernel is the stated cyclic group.
\end{proof}

\section{Fundamental Group of $X \setminus W$}\label{fundamental_group_section}
In this section, by using a {\it{root stack and quotient stack description}} (Theorem \zcref{root=quotient}), we identify the fundamental group $\pi_1(X \setminus W)$ of $X\setminus W$ with a subgroup of an orbifold mixed braid group (Corollary \zcref{ker_description_of_pi_1}).
\subsection{Branch locus and Ramification locus}
Recall that $U := \bP^n_{\bC} \setminus \Delta$, $Y := X \setminus W$, and for each $j = 0, 1,  \dots, n+2$, we have
\begin{equation*}
    B_j := \{D \in |\cO_{\bP^1}(n)|  \ | \ \lambda_j \in \Supp(D)\} \subset \bP^n_{\bC}, \ \ R_j := X \cap \{x_j = 0 \}.
\end{equation*}
By definition of $f \colon X \ra \bP^n_{\bC}$, the union $B := \cup_{j = 0}^{n+2} B_j$ is the branch locus of $f$, and the union $R := \cup_{j= 0}^{n+2} R_j$ is the ramification locus of $f$. 
Let us denote $f_Y \colon Y \ra U$ the restriction of $f$ to $Y = X \setminus W$, and denote $B_j^{\circ} := B_j \cap U$, $B^{\circ} := B\cap U = \cup_{j=0}^{n+2}B_j^{\circ}$, $R_j^{\circ} := R_j \cap Y$, and $R^{\circ} := R \cap Y = \cup_{j = 0}^{n+2} R_j^{\circ}$. 

By the equations in \zcref{polynomial_p_x_lambda}, 
\begin{equation}\label{divisor_root_relation}
    f_Y^* B_j^{\circ} = 2 R_j^{\circ}.
\end{equation}

\begin{Prop}\label{transversal_prop}
    Let $J := \{j_1, \dots, j_k \}$.
    If $k \leq n$, then
    \begin{equation*}
        B_J := B_{j_1} \cap \cdots \cap B_{j_k}
    \end{equation*}
    is a projective linear subspace of codimension $k$, and the hyperplanes meet transversally.
    If $k \geq n +1$, the intersection is empty.
    Hence, $B^{\circ} = \cup_{j = 0}^{n+2} B^{\circ}_j$ is a simple normal-crossing divisor on $U = \bP^n_{\bC} \setminus \Delta$.
\end{Prop}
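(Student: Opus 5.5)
Identify $|\cO_{\bP^1}(n)|$ with $\bP(\bC[Z,T]_n)$, the space of degree-$n$ binary forms $P$ up to scalar. The condition $\lambda_j \in \Supp(D)$ means $P(\lambda_j,1)=0$. This is a single nonzero linear form on the coefficient vector, namely evaluation at $[\lambda_j:1]$, so $B_j$ is a hyperplane. (If one of the marked points is $\infty$ the form is instead the leading coefficient; in the present setup all $\lambda_j$ are finite.) Hence $B_J$ is cut out by the $k$ linear forms $\mathrm{ev}_{\lambda_{j_1}},\dots,\mathrm{ev}_{\lambda_{j_k}}$.

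The key step is to show that these evaluation functionals on $\bC[z]_{\le n}$ are linearly independent whenever $k\le n+1$. In the monomial basis their coefficient matrix is the $k\times(n+1)$ matrix whose rows are $(1,\lambda_{j_i},\dots,\lambda_{j_i}^n)$. Since the $\lambda_j$ are pairwise distinct, any $k\le n+1$ of its columns, in particular the first $k$, form an invertible Vandermonde matrix. Equivalently, Lagrange interpolation gives, for each $i$, a polynomial of degree $\le n$ vanishing at all $\lambda_{j_\ell}$ with $\ell\ne i$ but not at $\lambda_{j_i}$. We conclude:
\begin{itemize}
\item for $k\le n$, $B_J$ is a linear subspace of codimension exactly $k$;
\item since the defining linear forms are independent, the hyperplanes meet transversally, that is, their differentials at every point of $B_J$ are independent;
\item for $k\ge n+1$, any $n+1$ of the forms already have only the zero common solution in $\bC^{n+1}$, so $B_J=\emptyset$. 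Geometrically, a nonzero degree-$n$ polynomial cannot vanish at $n+1$ distinct points.
\end{itemize}

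For the simple normal crossing statement on $U$, note that each $B_j^\circ = B_j\cap U$ is an open subset of a hyperplane, hence smooth. It is nonempty: a divisor with distinct points containing $\lambda_j$ exists. For $n\ge1$ it is also irreducible, being open in an irreducible hyperplane. Transversality is a local statement, so it restricts to the open set $U$. At every point of $U$, the components passing through it are smooth with independent defining differentials, which is exactly the simple normal crossing condition.

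There is no genuine obstacle here. The only care needed is the bookkeeping between the linear-algebra statement (independence of the evaluation functionals) and the geometric one: one must justify that codimension $k$ together with independent linear equations gives transversality at every point, and that emptiness for $k\ge n+1$ holds projectively.
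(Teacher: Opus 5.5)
Your proposal is correct and follows the paper's route: $B_j$ is the hyperplane cut out by the evaluation functional $p\mapsto p(\lambda_j)$ on $\bC[z]_{\le n}$, and any $k\le n+1$ of these functionals are linearly independent by the Vandermonde determinant. This gives codimension exactly $k$ for $k\le n$, emptiness for $k\ge n+1$, and transversality, hence simple normal crossings on the open set $U$. Your additional remarks, namely that each $B_j^\circ$ is nonempty and irreducible and that independent linear forms have independent differentials at common zeros, only spell out what the paper leaves implicit.
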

\begin{proof}
    Each $B_j \subset \bP(\bC[z]_{\leq n}) = \bP^n_{\bC}$ is defined by the evaluation form of polynomials 
    \begin{equation*}
        l_j \colon \bC[z]_{\leq n} \ra \bC, \ \  p(z) \mapsto p(\lambda_j). 
    \end{equation*}
    Since $\lambda_j$ are distinct, any $k \leq n + 1$ such forms are linearly independent by the Vandermonde determinant.
    Thus, $B_J \simeq \bP^{n-k}_{\bC}$ for $k \leq n$, whereas $B_J = \emptyset$ for $k \geq n+1$.
    At each point, the local defining equations of the hyperplanes through that point form a part of a coordinate system.
    Hence, the intersections are transverse, and $B^{\circ} = B \cap U$ is a simple normal-crossing divisor on $U$.
\end{proof}

If $x  \in Y$, define
\begin{equation*}
    J(x) := \{j \ | \ x_j = 0 \}, \ \ k := |J(x)|.
\end{equation*}
The integer $k$ $(0 \leq k \leq n )$ is the number of branch components that pass through $f_Y(x) \in U$, or equivalently, the codimension of the corresponding stratum.

\begin{Lem}\label{local_square_chart}
    Let $x \in Y$, $u = f_Y(x) \in U$, and $J(x) = \{j_1, \dots, j_k \}$.
    After passing to \'{e}tale neighborhoods, there are coordinates
    \begin{equation*}
        (\zeta_1, \dots, \zeta_k, r_1, \dots, r_{n-k}) \ \ \text{at} \ x
    \end{equation*}
    and
    \begin{equation*}
        (\tau_1, \dots, \tau_k, r'_1, \dots, r'_{n-k}) \ \ \text{at} \ u
    \end{equation*}
    such that
    \begin{equation*}
        B_{j_\ell}^{\circ} = \{ \tau_{\ell} = 0\}, \ \ R_{j_{\ell}}^{\circ} = \{\zeta_{\ell}  = 0\},
    \end{equation*}
    and
    \begin{equation*}
        \tau_{\ell} = \zeta_{\ell}^2 \ (1 \leq \ell \leq k), \ \ r_m \mapsto r'_m \ (1 \leq m \leq n-k).
    \end{equation*}
\end{Lem}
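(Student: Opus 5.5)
The plan is to work directly with the explicit description of $f$ as the restriction of the squaring map, and to use the transversality from Proposition \zcref{transversal_prop}.

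\emph{Choice of coordinates at $u$.} Since $J(x)=\{j_1,\dots,j_k\}$ with $k\le n$, the linear forms $l_{j_1},\dots,l_{j_k}$ (evaluation at $\lambda_{j_\ell}$) are linearly independent, and transverse, by Proposition \zcref{transversal_prop}.
\begin{itemize}
\item In an affine chart of $\bP^n_{\bC}$ around $u$, set $\tau_\ell := l_{j_\ell}/l_0$, where $l_0$ is a linear form not vanishing at $u$.
\item Complete these to a coordinate system $(\tau_1,\dots,\tau_k,r'_1,\dots,r'_{n-k})$ centered at $u$, choosing the $r'_m$ as further affine linear functions.
\item Then $B_{j_\ell}^{\circ}=\{\tau_\ell=0\}$ near $u$, because $B_{j_\ell}$ is exactly the zero locus of $l_{j_\ell}$.
\end{itemize}

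\emph{Coordinates at $x$.} Since $f=\bsq|_X$ and $y_j=x_j^2$, we have $\tau_\ell\circ f = c\, x_{j_\ell}^2/(\text{unit})$, using $p_x(\lambda_j)=x_j^2c_j$. Fix some $i$ with $x_i\neq 0$ and work in the chart $x_i=1$. Then $f^*l_0$ is a nonvanishing function near $x$, so $f^*\tau_\ell = x_{j_\ell}^2\cdot g_\ell$ with $g_\ell(x)\neq0$. Passing to an étale neighborhood (or simply shrinking analytically), choose a square root $\sqrt{g_\ell}$ and set
\[
\zeta_\ell := x_{j_\ell}\sqrt{g_\ell}.
\]
This gives $\tau_\ell=\zeta_\ell^2$, and $R_{j_\ell}^\circ=\{x_{j_\ell}=0\}=\{\zeta_\ell=0\}$. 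Finally set $r_m:=f^*r'_m$.

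\emph{Main step: $(\zeta,r)$ is a coordinate system at $x$.} Equivalently, the differentials $d\zeta_\ell=\sqrt{g_\ell(x)}\,dx_{j_\ell}$ and $dr_m$ must be linearly independent on $T_xX$. I would prove this as follows.
\begin{itemize}
\item Away from $R$, the map $f$ is étale. More generally, $G$ acts on $X$ and $f$ is the quotient map, and the stabilizer of $x$ is $(\bZ/2)^k$, generated by the sign changes $\eta_{j_\ell}$.
\item Near $x$, the map $f$ factors as $X\to X/(\bZ/2)^k\to \bP^n_{\bC}$, where the second map is étale at the image of $x$, since the full stabilizer has been quotiented out.
\item It remains to see that the $(\bZ/2)^k$-action near $x$ is a product of $k$ reflections whose fixed hyperplanes $\{x_{j_\ell}=0\}$ meet transversally in $X$. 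I expect this transversality to be the main obstacle.
\end{itemize}
To prove the transversality, I would show that $R_J=X\cap\{x_{j_1}=\dots=x_{j_k}=0\}$ is smooth of codimension $k$ at $x$. This follows from a Jacobian computation like the one in Lemma \zcref{Z_irreducibility_lemma}: a dependency among $dx_{j_\ell}$, $dF_0$, $dF_1$ at $x$ forces a polynomial $h$ of degree $\le1$ with $h(\lambda_j)x_j=0$ for all $j\notin J$. Hence at most one coordinate outside $J$ is nonzero, contradicting $\sum x_j^2=0$.

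\emph{Conclusion.} Given this transversality, $x_{j_1},\dots,x_{j_k}$ extend to coordinates at $x$. The invariant functions $x_{j_\ell}^2$ together with the remaining coordinates give coordinates on the quotient, and the étale identification with a neighborhood of $u$ sends these to $(\tau,r')$. This yields the stated normal form.
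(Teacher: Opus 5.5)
Your argument is correct, and most of it coincides with the paper's proof:
\begin{itemize}
\item linear coordinates at $u$ adapted to the transverse hyperplanes $B_{j_\ell}$;
\item the functions $x_{j_\ell}/x_{i_0}$ at $x$, rescaled by a square root of the unit $f^*\tau_\ell/(x_{j_\ell}/x_{i_0})^2$;
\item the definition $r_m:=f^*r'_m$;
\item the same Jacobian computation showing that $R_J$ is smooth of codimension $k$, so that the $dx_{j_\ell}$ are independent on $T_xX$.
\end{itemize}

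You differ at the crux, namely why the $dr_m$ complete the $d\zeta_\ell$ to a basis of $T_x^*X$. The paper restricts $f$ to the open stratum $R_J^{str}\to B_J^{str}$. There $f$ is coordinatewise squaring in the coordinates $x_i$ with $i\notin J$, all of which are nonzero, so it is étale. Hence the $r_m$ restrict to coordinates on $R_J^{str}$ at $x$, and together with the $\zeta_\ell$, which cut out $R_J$, they form a coordinate system. You instead factor $f$ near $x$ through $X/G_x$, where $G_x\cong(\mathbb{Z}/2)^k$ is generated by the reflections $\eta_{j_\ell}$, and use that $X/G_x\to X/G\cong\mathbb{P}^n$ is étale at the image of $x$. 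Your route is more conceptual and matches the stabilizer isomorphism $\theta_y$ that the paper uses later for the root-stack comparison. The cost is that it imports the identification of $f$ with the quotient map and the local structure of finite group quotients, whereas the paper's route uses only the explicit formula for $f$.

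Two points would make your last step precise. First, take the complementary coordinates $w_m$ to be $G_x$-invariant, for instance among the $x_i/x_{i_0}$ with $i\notin J$. Second, your final sentence is not literally true for the pre-chosen $r'_m$; replace it with a short differential check:
\begin{itemize}
\item $(\tau,r')$ pulls back to coordinates on $X/G_x$ at the image of $x$.
\item $\tau_\ell$ pulls back to a unit times $x_{j_\ell}^2$.
\item Hence, in the coordinates $(x_{j_\ell}^2,w_m)$, the differential of the pulled-back $r'_m$ at that point is $\sum_\ell a_{m\ell}\,d(x_{j_\ell}^2)+\sum_{m'}b_{mm'}\,dw_{m'}$ with $(b_{mm'})$ invertible.
\item Since $d(x_{j_\ell}^2)$ vanishes on $X$ at $x$, this gives $dr_m=\sum_{m'}b_{mm'}\,dw_{m'}$ on $T_xX$, as needed.
\end{itemize}
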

\begin{proof}
    Put $J = J(x) = \{j_1, \dots, j_k \}$, and set
    \begin{equation*}
        R_J := \bigcap_{\ell=1}^k R_{j_\ell}, \ \  R_J^{\circ} := \bigcap_{\ell=1}^k R_{j_\ell}^{\circ}, \ \ B_J:= \bigcap_{\ell = 1}^k B_{j_{\ell}},\ \ B_J^{\circ}:= \bigcap_{\ell = 1}^k B_{j_{\ell}}^{\circ},
    \end{equation*}
    \begin{equation*}
        R_J^{str} := R_J^{\circ} \setminus \left( \bigcup_{i \notin J}R_i^{\circ} \right), \ \ B_J^{str} := B_J^{\circ} \setminus \left( \bigcup_{i \notin J}B_i^{\circ} \right)
    \end{equation*}
    Since $J = J(x)$, no component $R_i^{\circ}$ with $i \notin J$ passes through $x$, and no component $B_i^{\circ}$ with $i \notin J$ passes through $u = f_Y(x)$.
    Therefore, $x \in R_J^{str}$ and $u \in B_J^{str}$.
    After shrinking around $x$ and $u$, we may assume that the only ramification and branch components meeting the respective neighborhood are those indexed by $J$.

    By Proposition \zcref{transversal_prop}, the divisors $B_{j_1}^{\circ}, \dots, B_{j_k}^{\circ}$ meet transversely.  
    Hence, $B_J^{\circ}$ itself is a smooth closed subvariety of $U$ of codimension $k$.
    The stratum $B_J^{str}$ is an open subvariety of $B_J^{\circ}$, and is also smooth of dimension $n-k$ near $u$.
    Consequently, we may choose coordinates $ (\tau_1, \dots, \tau_k, r'_1, \dots, r'_{n-k})$ at $u$ such that $B_{j_{\ell}}^{\circ} = \{\tau_{\ell} = 0 \}$ for $1 \leq \ell \leq k$ and such that the restrictions of $r'_1, \dots, r'_{n-k}$ form \'{e}tale coordinates along $B_J^{str}$.

    We next consider the corresponding coordinates on $Y$.
    Choose an index $i_0 \not\in J$.
    Since $J = J(x)$, $x_{i_0} \neq 0$.
    On the affine chart $\{x_{i_0} \neq 0 \}$ of $\bP^{n+2}_{\bC}$, put
    \begin{equation*}
        z_{\ell} = \frac{x_{j_{\ell}}}{x_{i_0}}
    \end{equation*}   
    for $1 \leq \ell \leq k$.
    Thus, $R_{j_{\ell}}^{\circ}$ is locally defined by $z_{\ell} = 0$.
    The variety $R_J^{\circ}$ is smooth of codimension $k$ in Y.
    Indeed, since $x_j = 0$ for $j \in J$, $R_J^{\circ}$ is given by two equations in the remaining coordinates in $Y$, that is,
    \begin{equation*}
        R_J^{\circ} = \left\{ \sum_{i \notin J} x_i^2 = 0, \ \sum_{i \notin J}\lambda_i x_i^2 = 0 \right\}.
    \end{equation*}    
    If the Jacobian had rank smaller than $2$ at a point, there would be scalars $a, b$ not both zero, such that
    \begin{equation*}
        (a + b\lambda_i)x_i = 0
    \end{equation*}
    for every $i \notin J$.
    Since the $\lambda_i$ are pairwise distinct, at most one of the remaining coordinates $x_i$ could be nonzero.
    However, with the condition $\sum_{i \notin J} x_i^2 =0$, it is impossible.
    Thus, $R_J^{\circ}$ is smooth of dimension $n-k$, and the functions $z_1, \dots, z_k$ are a part of local coordinates along $R_J^{\circ}.$
    We now show that the restriction 
    \begin{equation*}
        f_J^{\circ} := f|_{R_J^{\circ}} \colon R_J^{\circ} \ra B_J^{\circ}
    \end{equation*}
    is the coordinatewise-squaring morphism in the remaining coordinates $\{x_i \}_{i \notin J}$.
    Under the natural identification
    \begin{equation*}
        R_J \simeq \left\{\sum_{i \notin J}x_i^2 = 0, \ \sum_{i \notin J} \lambda_i x_i^2 = 0  \right\},\ \  B_J \simeq \left\{\sum_{i \notin J}y_i = 0, \ \sum_{i \notin J} \lambda_i y_i = 0  \right\},
    \end{equation*}
    the restriction $f_J := f|_{R_J} \colon R_J \ra B_J$ is the coordinatewise-squaring map $[x_i]_{i \notin J} \mapsto [x_i^2]_{i\notin J}$.
    Since $Y = f^{-1}(U)$, $R_J^{\circ} = R_J \cap Y$, and $B_J^{\circ} = B_J \cap U$, $f_J^{\circ} \colon R_J^{\circ} \ra B_J^{\circ}$ is the base change of $f_J$, and again the coordinatewise-squaring morphism.
    Moreover, the restriction
    \begin{equation*}
        f_J^{str} := f_J|_{R_J^{str}} \colon R_J^{str} \ra B_J^{str}
    \end{equation*}
    is finite \'{e}tale near $x$.
    After a further \'{e}tale base change, a finite \'{e}tale morphism becomes a disjoint union of copies of the base.
    Choosing the copy containing $x$, we may therefore identify the chosen \'{e}tale neighborhoods of $R_J^{str}$ and $B_J^{str}$.
    In particular, we may arrange that 
    \begin{equation*}
        f_Y^*(r'_m) = r_m
    \end{equation*}
    for $1 \leq m \leq n-k$, where $r_1, \dots, r_{n-k}$ are coordinates on the stratum $R_J^{str}$ near $x$.
    Consequently, $(z_1, \dots, z_k, r_1, \dots, r_{n-k})$ is an \'{e}tale coordinate system at $x$.

    For each $\ell$, the identity \zcref{divisor_root_relation}
    \begin{equation*}
        f_Y^* B_{j_{\ell}}^{\circ} = 2 R_{j_{\ell}}^{\circ}
    \end{equation*}
    implies that the pullback of the local equation $\tau_{\ell}$ has the form
    \begin{equation*}
        f_Y^*(\tau_{\ell}) = \epsilon_{\ell} z_{\ell}^2
    \end{equation*}
    for some unit $\epsilon_{\ell} \in \cO_{Y, x}^{\times}$.
    After another \'{e}tale base change, we may choose a square root $s_{\ell} \in \cO_{Y, x}^{\times}$ of $\epsilon_{\ell}$, that is, $s_{\ell}^2 = \epsilon_{\ell}$.
    Set $\zeta_{\ell} := s_{\ell}z_{\ell}$.
    Then,
    \begin{equation*}
        f_Y^*(\tau_{\ell}) = \zeta_{\ell}^2.
    \end{equation*}
    Multiplying a local parameter by a unit does not change its zero divisor, $R_{j_{\ell}}^{\circ} = \{\zeta_{\ell} = 0 \}$.
    
    Therefore,
     \begin{equation*}
        (\zeta_1, \dots, \zeta_k, r_1, \dots, r_{n-k}) 
    \end{equation*}
    and
    \begin{equation*}
        (\tau_1, \dots, \tau_k, r'_1, \dots, r'_{n-k}) 
    \end{equation*}
    are \'{e}tale coordinate systems at $x$ and $u$ that satisfy the claim of the lemma.
\end{proof}

\subsection{Root stacks and Quotient stacks}
Following \cite{Cad07}, define the root stack
\begin{equation*}
    \cU_n := U \left[ \sqrt[2]{(B_0^{\circ}, s_0)}, \dots, \sqrt[2]{(B_{n+2}^{\circ}, s_{n+2})} \right]
\end{equation*}
over $U = \bP^n_{\bC} \setminus \Delta$.
Here, $s_j$ is a section of $\cO_{U}(B_j^{\circ})$ defining $B_j^{\circ}$.

We also consider the quotient stack $[Y/G]$.
\begin{Lem}\label{separated_of_finite_type}
    Both $[Y/G]$ and $\cU_n$ are separated Deligne--Mumford stacks of finite type over $\bC$.
\end{Lem}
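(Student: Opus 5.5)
We treat the two stacks separately, and in each case reduce the claim to standard facts about quotient stacks of finite groups and about root stacks.

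\emph{The quotient stack.} First, $Y = X \setminus W$ is an open subscheme of the projective variety $X$. Hence it is a separated scheme of finite type over $\bC$, and in fact quasi-projective. The group $G \simeq (\bZ/2\bZ)^{n+2}$ is a finite, hence étale, group scheme over $\bC$. It acts on $Y$ because $W = f^{-1}(\Delta)_{red}$ is $G$-stable, as $f$ is $G$-invariant. For a finite group acting on a scheme of finite type over a field of characteristic zero, the quotient $[Y/G]$ is an algebraic stack with étale atlas $Y \ra [Y/G]$. Its diagonal is unramified, since stabilizers are finite and reduced in characteristic $0$. So $[Y/G]$ is Deligne--Mumford and of finite type.

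Separatedness of $[Y/G]$ is equivalent to properness of the action map $G \times Y \ra Y \times Y$, $(g,y) \mapsto (y, gy)$. This map is finite, because $G \times Y$ is a finite disjoint union of copies of $Y$. Each copy maps by the graph of the automorphism $g$, which is a closed immersion since $Y$ is separated. A finite morphism is proper, so $[Y/G]$ is separated.

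\emph{The root stack.} By construction (\cite{Cad07}), $\cU_n$ is the fiber product over $U$ of the individual root stacks $U[\sqrt[2]{(B_j^{\circ}, s_j)}]$. Each factor is the fiber product
\begin{equation*}
U \times_{[\bA^1/\mathbb{G}_m]} [\bA^1/\mathbb{G}_m],
\end{equation*}
where the second map is induced by the $2$nd power map. Locally on $U$, where $\cO_U(B_j^{\circ})$ is trivial and $s_j$ is a function, this factor is $[\Spec(\cO_U[t]/(t^2 - s_j))/\mu_2]$. So each factor is the quotient of a finite $U$-scheme by a finite group. Cadman shows the coarse map $\cU_n \ra U$ is proper and quasi-finite, and that $\cU_n$ is Deligne--Mumford in characteristic $0$, because $\mu_2$ is étale when $2$ is invertible. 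Since $U = \bP^n_{\bC} \setminus \Delta$ is a separated scheme of finite type, $\cU_n$ is separated and of finite type over $\bC$.

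As a cross-check, note that Lemma \zcref{local_square_chart} realizes $\cU_n$ étale-locally as $[\bA^k \times \bA^{n-k}/\mu_2^k]$, with $\mu_2^k$ acting by signs on the $\zeta_\ell$. This is again a finite-group quotient of a separated scheme.

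\emph{Main obstacle.} The only step needing care is separatedness of $\cU_n$ as a whole, as opposed to locally. I would deduce it as follows. The structure map $\cU_n \ra U$ is proper, being a composition of base changes of the proper maps $[\bA^1/\mathbb{G}_m] \ra [\bA^1/\mathbb{G}_m]$ induced by the $2$nd power map, and proper morphisms are separated. Composing with the separated morphism $U \ra \Spec \bC$ then shows that $\cU_n$ is separated over $\bC$.
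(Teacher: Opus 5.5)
Your proof is correct. For $[Y/G]$ it is the paper's argument: the quotient atlas $Y\to[Y/G]$ is finite étale, and the action map $G\times Y\to Y\times Y$ is finite because each piece is the graph of an automorphism of the separated scheme $Y$.

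For $\cU_n$ you take a different route, and the difference is in how separatedness is obtained.

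\emph{The paper's route} works only with the étale-local charts $\cU_n\times_U V\simeq[V^{\sharp}/\mu_2^k]$. These charts give a Deligne--Mumford atlas, and the graph argument shows that the diagonal of each chart is finite. Finiteness of $\Delta_{\cU_n/U}$ is then descended étale-locally, so $\pi_n$ is separated.

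\emph{Your route} uses Cadman's global description of $\cU_n$ as the fiber product over $U$ of the factors $U\times_{[\bA^1/\mathbb{G}_m]}[\bA^1/\mathbb{G}_m]$, together with properness of the squaring map $\theta_2$. This makes $\pi_n$ proper, hence separated and of finite type in one step. It gives a stronger conclusion (properness of $\pi_n$, which is also implicit in treating $U$ as the coarse moduli space in Lemma \zcref{criterion}) and avoids the descent step.

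However, it rests on the properness of $\theta_2$, which you assert without justification. It deserves one line: after the smooth base change $\bA^1\to[\bA^1/\mathbb{G}_m]$, $\theta_2$ becomes $[\bA^1_t/\mu_2]\to\bA^1$, $t\mapsto t^2$. This map is proper because it is
\begin{itemize}
\item separated, being a finite-group quotient;
\item of finite type;
\item universally closed, since $\bA^1_t\to\bA^1$ is finite and $\bA^1_t\to[\bA^1_t/\mu_2]$ is surjective.
\end{itemize}
The paper's approach is more hands-on, and it reuses the local charts it needs anyway in Proposition \zcref{lift_of_f_Y}.

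One small imprecision in your unused cross-check: Lemma \zcref{local_square_chart} only supplies étale coordinates on $U$ and $Y$. The resulting chart of $\cU_n$ is $[V^{\sharp}/\mu_2^k]$ with $V^{\sharp}$ étale over $\bA^n$, not literally $[\bA^k\times\bA^{n-k}/\mu_2^k]$.
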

\begin{proof}
    Since $Y$ is an open subscheme of the projective variety $X$, it is separated and of finite type over $\bC$.
    The group $G$ is finite \'{e}tale.
    Hence the quotient atlas $p \colon Y \ra [Y/G]$ is representable, finite \'{e}tale and surjective, so $[Y/G]$ is Deligne--Mumford.

    Notice that we have the following cartesian diagram
    \[\begin{tikzcd}
	{G \times Y} && {Y \times Y} \\
	& \square \\
	{[Y/G]} && {[Y/G] \times [Y/G].}
	\arrow["{(\sigma, p_2)}", from=1-1, to=1-3]
	\arrow[from=1-1, to=3-1]
	\arrow["{p \times p}", from=1-3, to=3-3]
	\arrow["\Delta_{[Y/G]}", from=3-1, to=3-3]
\end{tikzcd}\]
    That is, after base change along
    \begin{equation*}
       p \times p \colon  Y \times Y \ra [Y/G] \times [Y/G],
    \end{equation*}
    the diagonal of $[Y/G]$ becomes
    \begin{equation}\label{base_changed_diagonal}
        (\sigma, p_2) \colon G \times Y \ra Y \times Y, \ (g, y) \mapsto (gy, y).
    \end{equation}
   For each $g \in G$, the restriction $\{g \} \times Y$ is the graph of the automorphism $g \colon Y \ra Y$, hence a closed immersion because $Y$ is separated.
   Since $G$ is finite, the morphism \zcref{base_changed_diagonal} is finite.
   Thus $\Delta_{[Y/G]}$ is finite, and $[Y/G]$ is separated.

   For $\cU_n$, choose an \'{e}tale neighborhood $V = \Spec(A) \ra U$ on which the root divisors through the chosen point are defined by $\tau_1, \dots, \tau_{k}$.
   The standard root-stack chart gives
   \begin{equation}\label{root_stack_chart}
       \cU_n \times_U V \simeq [V^{\sharp}/\mu_2^k], \ \ V^{\sharp} = \Spec(A[w_1, \dots, w_k]/(w_1^2-\tau_1, \dots, w_k^2-\tau_k)).
   \end{equation}
   The scheme $V^{\sharp}$ is separated and of finite type over $\bC$, and $\mu_2^k$ is finite \'{e}tale.
   The local quotient charts provide an \'{e}tale atlas, so $\cU_n$ is Deligne--Mumford.
   By construction, $U$ is the coarse moduli space of the Deligne-- Mumford stack $\cU_n$ with the structure morphism $\pi_n \colon \cU_n \ra U$.
   
   The same graph argument shows that the diagonal of this local quotient over $V$ is finite.
   Finiteness descends \'{e}tale locally, so $\Delta_{\cU_n/U}$ is finite.
   Hence $\pi_n \colon \cU_n \ra U$ is separated.
   Since $U$ is separated over $\bC$, $\cU_n$ is separated over $\bC$.
   
 \end{proof}

\begin{Prop}\label{lift_of_f_Y}
    There exists a lift $\tilde{f}_Y \colon Y \ra \cU_n$ of $f_Y \colon Y \ra U$ that makes the following diagram
   \begin{equation}
\xymatrix{
Y\ar[rr]^-{\tilde{f}_Y}_-{}\ar[dr]_-{f_Y}&\ar@{}[d]|{\circlearrowright}&\cU_n\ar[dl]^-{\pi_n}\\
&U&
}
\end{equation}
commutative.
Moreover, the morphism $\tilde{f}_Y$ is representable by algebraic spaces, finite, and \'{e}tale.
\end{Prop}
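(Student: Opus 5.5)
The lift comes from the universal property of the iterated root stack \cite{Cad07}. A morphism $T \ra \cU_n$ over $U$ is the same as a morphism $g \colon T \ra U$ together with, for each $j = 0, \dots, n+2$, a triple $(L_j, t_j, \phi_j)$. Here $L_j$ is a line bundle on $T$, $t_j \in H^0(T, L_j)$, and $\phi_j \colon L_j^{\otimes 2} \xrightarrow{\sim} g^*\cO_U(B_j^{\circ})$ is an isomorphism sending $t_j^2$ to $g^*s_j$.

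For $T = Y$ and $g = f_Y$ we take $L_j := \cO_Y(R_j^{\circ})$ and $t_j$ its canonical section. Since $R_j^{\circ}$ is a Cartier divisor on the smooth $Y$, the relation \zcref{divisor_root_relation}, $f_Y^*B_j^{\circ} = 2R_j^{\circ}$, gives a canonical isomorphism
\begin{equation*}
  \cO_Y(2R_j^{\circ}) \simeq f_Y^*\cO_U(B_j^{\circ})
\end{equation*}
sending $t_j^2$ to $f_Y^* s_j$. These data define $\tilde f_Y$ with $\pi_n \circ \tilde f_Y = f_Y$. One could also use the global formula $x_j^2 = p_x(\lambda_j)/c_j$ from \zcref{polynomial_p_x_lambda} to write the isomorphism explicitly, but the divisor equality is enough.

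The remaining properties are étale local on $\cU_n$, so we check them in the charts \zcref{root_stack_chart}. Near $u = f_Y(x)$ use the coordinates of Lemma \zcref{local_square_chart}, so $V^\sharp$ is cut out by $w_\ell^2 = \tau_\ell$ for $\ell \le k$. The fiber product $Y \times_{\cU_n} V^\sharp$ is then, étale locally near $x$, the scheme of pairs $(y, v)$ with $f_Y(y) = v$ in $V$, together with a $\mu_2^k$-torsor identifying the square roots $w_\ell$ with the $\zeta_\ell$ up to sign. Since $\tau_\ell = \zeta_\ell^2$, this is the disjoint union over $(\e_\ell) \in \mu_2^k$ of the loci $\{w_\ell = \e_\ell \zeta_\ell\}$. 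In the coordinates $(\zeta, r)$ at $x$ and $(w, r')$ on $V^\sharp$, each piece maps isomorphically, because $r_m \mapsto r'_m$ and $\zeta_\ell \mapsto \e_\ell w_\ell$. Hence the base change of $\tilde f_Y$ to $V^\sharp$ is, locally on the source, a scheme étale over $V^\sharp$. So $\tilde f_Y$ is representable and étale. Representability also follows directly: the stabilizers $\mu_2^k$ of $\cU_n$ at $u$ act faithfully on the local square roots, and $Y$ is a scheme, so $\tilde f_Y$ is injective on automorphism groups.

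For finiteness, note that $\pi_n$ is proper and quasi-finite, being a coarse space map, and separated by Lemma \zcref{separated_of_finite_type}. The map $f_Y \colon Y \ra U$ is finite, as the base change of the finite $f$ along $U \subset \bP^n_{\bC}$ (recall $Y = f^{-1}(U)$). Since $\pi_n \circ \tilde f_Y = f_Y$ is proper and $\pi_n$ is separated, $\tilde f_Y$ is proper by the standard cancellation. Since $\tilde f_Y$ is also representable and quasi-finite (it is étale), it is finite.

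The main obstacle is the local computation. One has to check carefully that the étale-local chart of Lemma \zcref{local_square_chart} really matches the standard root-stack chart, including the normalization $f_Y^*\tau_\ell = \zeta_\ell^2$ with no leftover unit. This is why the lemma took a square root of $\epsilon_\ell$. One also has to check that the isomorphisms $\phi_j$ glue to the global canonical ones, so that the locally defined maps agree with the globally defined $\tilde f_Y$. I would verify this agreement by comparing both with the canonical section $t_j$ of $\cO_Y(R_j^{\circ})$, which is $\zeta_\ell$ up to a unit.
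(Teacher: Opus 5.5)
Your proof is correct and follows essentially the same route as the paper. The lift comes from the square-root data $(\cO_Y(R_j^{\circ}), t_j)$ via the universal property of the root stack, and representability is automatic because $Y$ is a scheme. \'Etaleness is checked in the standard chart $[V^{\sharp}/\mu_2^k]$ using the coordinates of Lemma \zcref{local_square_chart}; your sign sheets $\{w_\ell = \e_\ell \zeta_\ell\}$ are just the paper's map $h$ twisted by $\mu_2^k$. The only small variation is in finiteness: you obtain properness from the graph cancellation using separatedness of $\pi_n$ and then upgrade via quasi-finiteness, whereas the paper runs the same graph factorization directly with the finite diagonal $\Delta_{\cU_n/U}$.
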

\begin{proof}
Equation \zcref{divisor_root_relation} gives us square-root data
\begin{equation}\label{square_root_data}
    M_j := \cO_Y(R_j^{\circ}), \ \ t_j \in \Gamma(Y, M_j), \ \ \alpha_j \colon M_j^{\otimes 2} \xrightarrow{\sim} f_Y^* \cO_U(B_j^{\circ}),
\end{equation}
such that $\alpha_j(t_j^{\otimes 2}) = f_Y^* s_j$.
Hence, by the universal property of the root stack, we obtain a lift $\tilde{f}_Y \colon Y \ra \cU_n$ such that $\pi_n \circ \tilde{f}_Y = f_Y$.

Let us prove the representability of the morphism $\tilde{f}_Y$.
For a morphism $T \ra \cU_n$ from a scheme $T$, consider the fiber product $Y \times_{\cU_n}T$. This fiber product fits in the following diagram
 \[\begin{tikzcd}
	{Y \times_{\mathcal{U}_n}T} && {Y \times T} \\
	& \square \\
	{\mathcal{U}_n} && {\mathcal{U}_n \times \mathcal{U}_n}
	\arrow[from=1-1, to=1-3]
	\arrow[from=1-1, to=3-1]
	\arrow[from=1-3, to=3-3]
	\arrow["{\Delta_{\mathcal{U}_n}}", from=3-1, to=3-3]
\end{tikzcd}\]
Since the diagonal of an algebraic stack is representable, $Y \times_{\cU_n}T$ is an algebraic space.
Therefore, $\tilde{f}_Y$ is representable.

Next, we will check the finiteness.
Since the graph $\Gamma_{\tilde{f}_Y} \colon Y \ra Y \times_U \cU_n$ of $\tilde{f}_Y$ is the base change of the diagonal map $\Delta_{\cU_n/U} \colon \cU_n \ra \cU_n \times_U \cU_n$, and $\Delta_{\cU_n/U}$ is finite, $\Gamma_{\tilde{f}_Y}$ is finite.
Also, the second projection $p_2 \colon Y \times_U \cU_n \ra \cU_n$ is the base change of the finite morphism $f_Y \colon Y \ra U$, so $p_2$ is finite.
Therefore, the composition $\tilde{f}_Y = p_2 \circ \Gamma_{\tilde{f}_Y}$ is finite.

It remains to prove that the morphism $\tilde{f}_Y$ is \'{e}tale.
Let $x \in Y$, $u = f_Y(x)$, and write $J(x) = \{j_1, \dots, j_k \}$.
By Lemma \zcref{local_square_chart}, we may choose \'{e}tale neighborhoods
$\iota_Y \colon \widetilde{V} \ra Y$ of $x$ and $\iota_U \colon V = \Spec(A) \ra U$ of $u$, together with a morphism $f_{\widetilde{V}} \colon \widetilde{V} \ra V$ satisfying $f_Y \circ \iota_Y = \iota_U \circ f_{\widetilde{V}}$.
We may also choose \'{e}tale coordinate maps
\begin{equation*}
    \varphi = (\zeta_1, \dots, \zeta_k, r_1, \dots, r_{n-k}) \colon \widetilde{V}
 \ra \bA_{\zeta, r}^n
\end{equation*}
and
\begin{equation*}
    \psi = (\tau_1, \dots, \tau_k, r'_1, \dots, r'_{n-k}) \colon V \ra \bA_{\tau, r'}^n
\end{equation*}
such that $f_{\widetilde{V}}^* \tau_{\ell} = \zeta_{\ell}^2$ for $1 \leq \ell \leq k$ and $f_{\widetilde{V}}^* r'_m = r_m$ for $1 \leq m \leq n-k$.
Here, we can write $B_{j_{\ell}}^{\circ} = \{\tau_{\ell} = 0\}$ and $R_{j_{\ell}}^{\circ} = \{\zeta_{\ell} = 0 \}$.
After shrinking, we may also assume that $V$ meets no branch component indexed outside $J(x)$.
Consider the standard root chart
\begin{equation*}
    V^{\sharp} := \Spec(A[w_1, \dots, w_k]/(w_1^2-\tau_1, \dots, w_k^2-\tau_k)).
\end{equation*}
The identities $f_{\widetilde{V}}^* \tau_{\ell} = \zeta_{\ell}^2$ define a morphism $h \colon \widetilde{V} \ra V^{\sharp}$ over $V$, with $h^*w_{\ell} = \zeta_{\ell}$.

We first show that $h$ is \'{e}tale.
Let us define the map
\begin{equation*}
    s \colon \bA_{\zeta,r}^n \ra \bA_{\tau,r'}^n, \ \ (\zeta_1, \dots, \zeta_k, r_1, \dots, r_{n-k}) \mapsto (\zeta_1^2, \dots, \zeta_k^2, r_1, \dots, r_{n-k}).
\end{equation*}
By construction, $V^{\sharp} \simeq V \times_{\bA_{\tau,r'}^n} \bA_{\zeta, r}^n$, where the fiber product is taken with respect to $\psi$ and $s$.
Then, the projection $\varphi^{\sharp} \colon V^{\sharp} \ra \bA_{\zeta,r}^n$ is \'{e}tale, because it is the base change of the \'{e}tale coordinate map $\psi$.
Moreover, $\varphi^{\sharp} \circ h = \varphi$.
Since $\varphi$ and $\varphi^{\sharp}$ are \'{e}tale, $h$ is also \'{e}tale.

By the local description \zcref{root_stack_chart}, we have $\cU_n \times_U V \simeq [V^{\sharp}/\mu_2^k]$.
Under compatible local trivializations, the pullback of the square-root data \zcref{square_root_data} associated with $B_{j_{\ell}}^{\circ}$ is represented by the trivial line bundle with section $\zeta_{\ell}$, whose square is $f_{\widetilde{V}}^* \tau_{\ell}$.
The remaining root factors are trivial over $V$, since their defining sections are invertible there.
Consequently, the morphism
\begin{equation*}
    \widetilde{V} \ra \cU_n \times_U V
\end{equation*}
induced by $\tilde{f}_Y$ is $2$-isomorphic, under the above identification \zcref{root_stack_chart}, to the composite
\begin{equation*}
 \widetilde{V} \xrightarrow{h} V^{\sharp} \xrightarrow{q} [V^{\sharp}/\mu_2^k],
\end{equation*}
where $q$ is the standard quotient atlas.
Since $\mu_2^k$ is finite \'{e}tale over $\bC$, the morphism $q$ is representable, finite \'{e}tale, and surjective.
Thus, $q \circ h$ is \'{e}tale.

Finally, the projection $\cU_n \times_UV \ra \cU_n$ is \'{e}tale, because it is the base change of $\iota_U$.
It follows that $\tilde{f}_Y \circ \iota_Y$ is also \'{e}tale.
As $x$ is arbitrary, these neighborhoods form an \'{e}tale covering of $Y$.
Since $\tilde{f}_Y$ is representable, and \'{e}taleness is local on the source for the \'{e}tale topology \cite[\href{https://stacks.math.columbia.edu/tag/06M2}{Tag 06M2}]{stacks-project}, $\tilde{f}_Y$ is \'{e}tale. 
\end{proof}

\begin{Lem}[{\cite[Exercise 5.4.14]{Alper}}]\label{criterion}
    If $f \colon \cX \ra \cY$ is a morphism of Deligne--Mumford stacks separated and of finite type over a noetherian algebraic space $S$ fitting in a commutative diagram
    \[\begin{tikzcd}
	{\mathcal{X}} && {\mathcal{Y}} \\
	\\
	X && Y
	\arrow["f", from=1-1, to=1-3]
	\arrow["{\pi_{\mathcal{X}}}"', from=1-1, to=3-1]
	\arrow["{\pi_{\mathcal{Y}}}", from=1-3, to=3-3]
	\arrow[from=3-1, to=3-3]
    \end{tikzcd}\]
    where $\pi_{\cX} \colon \cX \ra X$ and $\pi_{\cY} \colon \cY \ra Y$ are coarse moduli spaces and $x \in |\cX|$ is a closed point such that 
    \begin{enumerate}
        \item[(1)] $f$ is \'{e}tale at $x$ and
        \item[(2)] the induced map $G_x \ra G’_{f(x)}$ of geometric stabilizer groups is bijective,
    \end{enumerate}
    then there exists an open neighborhood $U \subseteq X$ of $\pi_{\cX}(x)$ such that $U \ra X \ra Y$ is \'{e}tale and 
    \begin{equation*}
        \pi_{\cX}^{-1}(U) = U \times_X \cX \simeq U \times_Y \cY.
    \end{equation*}
\end{Lem}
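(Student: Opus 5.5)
The plan is to reduce to the case of finite group quotients using the local structure theory of Deligne--Mumford stacks. Then I would prove the statement there by a Luna-type ``fundamental lemma'' for finite groups. Put $y := f(x)$ and $H := G'_{y}$.

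First I reduce to quotient presentations. Since $\cY$ is separated Deligne--Mumford of finite type with coarse space $Y$, the local structure theorem gives an étale neighborhood $Y_1 \ra Y$ of $\pi_{\cY}(y)$ with $Y_1 \times_Y \cY \simeq [W/H]$. Here $W$ is affine, $H$ fixes a point $w$ over $y$, and $Y_1 = W/H$. Coarse moduli spaces are compatible with flat base change on the coarse space, so it suffices to prove the statement after base change along $Y_1 \ra Y$. Next, the injectivity of $G_x \ra H$ is an open condition on $|\cX|$ (the relative inertia of $f$ is finite, and unramified at $x$). So after shrinking $\cX$, $f$ is representable and étale. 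Then $\cX' := \cX \times_{\cY} [W/H] \simeq [V/H]$, where $V := \cX' \times_{[W/H]} W$ is an algebraic space with an $H$-equivariant étale map $g \colon V \ra W$. Choose a point $v \in V$ over $x$ and $w$. The bijectivity of $G_x \ra H$ says exactly that $\mathrm{Stab}_H(v) = H$. Shrinking $V$ to an $H$-invariant affine neighborhood of $v$, its coarse space is $V/H$.

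The core step is the fundamental lemma: for an $H$-equivariant map $g \colon V \ra W$ of affine schemes of finite type, étale at an $H$-fixed point $v$ with $g(v) = w$ also $H$-fixed, there is an $H$-invariant affine open $v \in V_0 \subseteq V$ such that $V_0/H \ra W/H$ is étale and $V_0 \ra V_0/H \times_{W/H} W$ is an isomorphism. I would prove it in three steps.
\begin{enumerate}
    \item Étaleness at $v$ with trivial residue field extension gives an $H$-equivariant isomorphism $\widehat{\cO}_{W,w} \xrightarrow{\sim} \widehat{\cO}_{V,v}$.
    \item Since $H$ is finite and we are in characteristic $0$, taking $H$-invariants commutes with completion. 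Hence $\widehat{\cO}_{W/H,\bar w} \simeq \widehat{\cO}_{V/H,\bar v}$, so $V/H \ra W/H$ is étale at $\bar v$.
    \item The morphism $\phi \colon V \ra V/H \times_{W/H} W$ is then étale at $v$, since both sides are étale over $W$. It is also finite, since $V \ra V/H$ is finite and the target is separated. Over the image of $v$, its fiber is a single point with trivial residue extension, because $v$ is the unique point of its $H$-orbit. So $\phi$ is an isomorphism over a neighborhood of that image point.
\end{enumerate}
By finiteness, the locus where $\phi$ fails to be an isomorphism has closed image in $V/H$. Its complement, pulled back to $V$, is the required $H$-saturated open $V_0$.

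Finally I descend. Taking $H$-quotient stacks of $V_0 \simeq V_0/H \times_{W/H} W$ gives $[V_0/H] \simeq V_0/H \times_{W/H} [W/H]$. Let $U \subseteq X$ be the image of $[V_0/H]$ in the coarse space; it is open because $\cX' \ra \cX$ is étale. Then $\pi_{\cX}^{-1}(U) \simeq U \times_Y \cY$ and $U \ra Y$ is étale. Both facts follow from the corresponding statements over $Y_1 = W/H$ together with the compatibility of coarse spaces with étale base change. I expect the main obstacle to be this fundamental lemma, specifically two points in it. The first is ensuring that the isomorphism locus of $\phi$ can be taken $H$-saturated. The second is that invariants commute with completion, which is where the tameness of $H$ (automatic in characteristic $0$) is used. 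The bookkeeping that reduces the stack statement to it should be routine.
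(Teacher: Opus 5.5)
Your proof is correct in the setting where the paper actually uses this lemma (stacks of finite type over $\bC$, in showing that $\Phi\colon[Y/G]\to\cU_n$ is an equivalence). It takes a genuinely different route from the paper, which gives no argument of its own and simply cites Rydh's results on \'{e}tale, stabilizer-preserving morphisms and their coarse quotients \cite[Propositions 6.5, 6.7]{Ry13}. You instead reprove that result. You reduce via the \'{e}tale local structure $Y_1\times_Y\cY\simeq[W/H]$ to an $H$-equivariant \'{e}tale map $g\colon V\to W$ at a fixed point, and then prove a Luna-type fundamental lemma.

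\begin{itemize}
\item \textbf{What your route buys.} It is self-contained and shows exactly where bijectivity of stabilizers enters. Because the $H$-orbit of $v$ is $\{v\}$, the invariants $(\widehat{\cO}_{V,v})^H$ compute $\widehat{\cO}_{V/H,\bar v}$. For the same reason the finite map $\phi\colon V\to V/H\times_{W/H}W$ has a one-point, degree-one fibre over $\phi(v)$.
\item \textbf{What the citation buys.} It gives the full generality of the statement as written: arbitrary noetherian $S$, residue fields that are not algebraically closed, and stabilizer group schemes that are not constant. Your write-up does not cover these as it stands.
\end{itemize}

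One correction to your own assessment of the obstacles: Step 2 uses neither characteristic $0$ nor tameness.
\begin{itemize}
\item The completion $\widehat{\cO}_{W/H,\bar w}$ is flat over $\cO(W/H)$.
\item Taking $H$-invariants is a kernel, so it commutes with flat base change.
\item Since $w$ is the only point over $\bar w$, one has $\cO(W)\otimes_{\cO(W/H)}\widehat{\cO}_{W/H,\bar w}\simeq\widehat{\cO}_{W,w}$.
\end{itemize}
Hence $(\widehat{\cO}_{W,w})^H\simeq\widehat{\cO}_{W/H,\bar w}$ for every finite $H$. The real extra work over a general $S$ is handling residue field extensions (arranging $\kappa(v)=\kappa(w)$) and non-constant stabilizers, by passing to a further \'{e}tale neighbourhood.

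Two steps you pass over quickly each need a line:
\begin{itemize}
\item Before forming $\phi$, you must shrink $V/H$ to the \'{e}tale locus of $V/H\to W/H$. Only there is the target of $\phi$ \'{e}tale over $W$.
\item An $H$-invariant affine neighbourhood of $v$ exists because $V$ is a scheme, being separated and \'{e}tale over the affine $W$. So $\bigcap_{h\in H}hV_1$ is affine for any affine open $V_1\ni v$.
\end{itemize}
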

\begin{proof}
    The statement follows from \cite[Proposition 6.5, Proposition 6.7]{Ry13}.
\end{proof}

\begin{Lem}\label{ealeness_geometric_stabilizers}
    The morphism $\tilde{f}_Y \colon Y \ra \cU_n$ naturally descends to the morphism
    \begin{equation*}
        \Phi \colon [Y/G] \ra \cU_n.
    \end{equation*}
    Moreover, the morphism $\Phi$ is representable by algebraic spaces and \'{e}tale.
\end{Lem}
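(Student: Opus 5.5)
To build $\Phi$, I would first show that $\tilde{f}_Y$ is $G$-invariant up to a coherent choice of $2$-isomorphisms. A morphism $[Y/G] \ra \cU_n$ is the same as a $G$-equivariant object of $\cU_n(Y)$, so this is exactly the data needed.

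The object $\tilde{f}_Y$ corresponds to the square-root data $(M_j, t_j, \alpha_j)$ of \zcref{square_root_data}. The key observation is that $f_Y \circ g = f_Y$ for every $g \in G$, since $\bsq$ is $G$-invariant. For a sign vector $\e = (\e_0, \dots, \e_{n+2})$ representing $g$, the divisor $R_j^{\circ} = \{x_j = 0\}$ is $g$-stable. Hence $M_j = \cO_Y(R_j^{\circ})$ carries a canonical $G$-linearization. On the homogeneous level the section $t_j$, which is given by $x_j$, transforms as $g^* t_j = \e_j t_j$. Then $\alpha_j$ is compatible with the linearization, because $\e_j^2 = 1$ and $f_Y^*s_j$ is invariant. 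To make this precise, I would realize $M_j$ as $\cO_Y(1)$ with section $x_j$, using that $R_j^{\circ}$ is the hyperplane section $\{x_j = 0\}$. I would then use the $(\bZ/2\bZ)^{n+3}$-linearization on $\cO(1)$ and check that $(-1, \dots, -1)$ acts by $-1$ on every $M_j$ simultaneously. Since root-stack objects $(M_j, t_j, \alpha_j)$ are considered up to isomorphism, multiplication by $-1$ on all line bundles is an automorphism of the object. Therefore the linearization descends to a $G$-equivariant structure on the object, with cocycle conditions automatic from the group action. This yields $\Phi$ with $\Phi \circ p \cong \tilde{f}_Y$.

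Representability comes next. It suffices to show $\Phi$ is injective on automorphism groups of geometric points. The stabilizer of $x \in Y$ in $G$ is generated by the $\eta_j$ with $j \in J(x)$; this is the group $\mu_2^k$, because $x_j = 0$ exactly for $j \in J(x)$ and the coordinates outside $J(x)$ impose no further symmetry. I would need to confirm this, since two nonzero coordinates cannot both be negated modulo the global $-1$. The stabilizer of $\tilde{f}_Y(x)$ in $\cU_n$ is $\mu_2^k$, one factor for each branch component through $u$. By the computation above, $\eta_j$ acts on $t_j$ by $-1$ and on $t_i$, $i \neq j$, by $+1$. So the induced map on stabilizers is the identity $\mu_2^k \ra \mu_2^k$, which in particular is injective and even bijective.

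For \'{e}taleness, note that $p \colon Y \ra [Y/G]$ is an \'{e}tale surjection and $\Phi \circ p \cong \tilde{f}_Y$ is \'{e}tale by Proposition \zcref{lift_of_f_Y}. Since \'{e}taleness is local on the source in the \'{e}tale topology, $\Phi$ is \'{e}tale, using Tag 06M2 as in the proof of Proposition \zcref{lift_of_f_Y}. The main obstacle I expect is the descent step: producing genuinely coherent $G$-equivariant data rather than pointwise isomorphisms, and handling the global $-1$ correctly. The bijectivity of stabilizers is not required for this lemma itself, but it sets up the application of Lemma \zcref{criterion} that should follow.
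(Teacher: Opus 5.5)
Your overall architecture matches the paper's (equivariant square-root data, injectivity on stabilizers, \'{e}taleness from $\Phi\circ p\cong\tilde f_Y$). However, the descent step, which you rightly flag as the crux, rests on a false claim. In Cadman's root stack, a morphism $(M,t,\alpha)\to(M',t',\alpha')$ is an isomorphism $\psi\colon M\to M'$ with $\psi(t)=t'$ and $\alpha'\circ\psi^{\otimes 2}=\alpha$. Multiplication by $-1$ on $M_j$ sends $t_j$ to $-t_j$, so it is \emph{not} an automorphism of $(M_j,t_j,\alpha_j)$. In fact, since $Y$ is integral and each $t_j$ is a nonzerodivisor, the object $\xi$ corresponding to $\tilde f_Y$ has trivial automorphism group in $\cU_n(Y)$. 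For the same reason, the standard linearization of $\cO_Y(1)$, under which $g^*t_j=\e_j t_j$, does not give isomorphisms $g^*\xi\to\xi$ of root-stack objects at all. The inference is also backwards: a $(\bZ/2\bZ)^{n+3}$-equivariant structure descends to $G$ only if $(-1,\dots,-1)$ acts by the \emph{identity}. Acting by a nontrivial automorphism would obstruct descent rather than permit it.

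The repair is short. Twist the linearization of $M_j$ by the character $\e\mapsto\e_j$; equivalently, use the canonical $G$-linearization of $\cO_Y(R_j^{\circ})$ attached to the $G$-stable Cartier divisor $R_j^{\circ}$, which is what the paper does. Then $t_j$ is invariant, and $\alpha_j$ is equivariant because $\e_j^2=1$ and $f_Y^*s_j$ is invariant. Moreover $(-1,\dots,-1)$ acts on $M_j$ by $(-1)\cdot(-1)=1$, so the structure genuinely factors through $G$. Alternatively, argue by rigidity: isomorphisms $g^*\xi\cong\xi$ exist (your linearization composed with multiplication by $\e_j$ on $M_j$), and they are unique because $\Aut(\xi)=1$, so the cocycle condition is automatic.

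With the corrected structure, your stabilizer computation goes through once rephrased. For $j\in J(x)$, $\eta_j$ fixes the section $t_j$ but acts by $-1$ on the fiber $M_j|_x$ and trivially on $M_i|_x$ for $i\neq j$; this is exactly the paper's isomorphism $\theta_y$. Your use of the geometric-point criterion for representability is legitimate here because both stacks are Deligne--Mumford over $\bC$. The paper instead verifies Tag 04YY on arbitrary $T$-points and reduces to geometric points using that $G$ is finite \'{e}tale. Your \'{e}taleness argument coincides with the paper's.
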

\begin{proof}
    First, notice that each $R_j^{\circ}$ is a $G$-Cartier divisor. Therefore, $(\cO_Y(R_j^{\circ}), t_j)$ has its $G$-linearization, and the square-root data \zcref{square_root_data} descend to $[Y/G]$.
    As a result, the lift $\tilde{f}_Y$ naturally descends to $\Phi \colon [Y/G] \ra \cU_n$.

    Next, let us prove that the morphism $\Phi$ is representable by algebraic spaces.
    By \cite[\href{https://stacks.math.columbia.edu/tag/04YY}{Tag 04YY}]{stacks-project}, it is enough to prove that for every scheme $T$ and every object $\xi \in [Y/G](T)$, the homomorphism
    \begin{equation}\label{autom_hom}
        \Aut_{[Y/G]}(\xi) \ra \Aut_{\cU_n}(\Phi(\xi))
    \end{equation}
    has trivial kernel.
    By definition of quotient stacks, $\xi \in [Y/G](T)$ represents a $G$-torsor $P \ra T$ together with a $G$-equivariant morphism $u \colon P \ra Y$.
    Let $\alpha \in \Aut_{[Y/G]}(\xi)$ and assume that $\Phi(\alpha) \in \Aut_{\mathcal{U}_n}(\Phi(\xi))$ is the identity.
    After a suitable base change $T' \ra T$, the $G$-torsor $P$ becomes trivial.
    Choose a trivialization $P_{T'} \simeq G \times T'$.
    The equivariant morphism $u_{T'} \colon G \times T' \ra Y$ is then determined by a morphism $x \colon T' \ra Y$, and the automorphism $\alpha_{T'}$ is uniquely written as
    \begin{equation*}
        \alpha_{T'} \colon G \times T' \ra G \times T', \ \ (h, t) \mapsto (h.g(t), t)
    \end{equation*}
    with a morphism $g \colon T' \ra G$.
    The condition $u \circ \alpha = u$ implies $g(t).x(t) = x(t)$, hence $g(t)$ belongs to the stabilizer group of $x(t)$.
    Let $\overline{t} \ra T'$ be a geometric point, put $y = x(\overline{t})$, and let $J(y) = \{j \ | \ y_j = 0 \}$.
    Write $g(\overline{t}) = [\epsilon] \in G_y$.
    Since $[\epsilon]$ fixes $y \in Y$, there exists $c \in \{\pm 1 \}$ such that $\epsilon_i = c$ for $i \not\in J(y)$.
    The homomorphism on the geometric stabilizer groups induced by $\Phi$ is
    \begin{equation}\label{hom_geom_stab_groups}
      \theta_y \colon G_y \ra \mu_2^{J(y)}, \ \ [\epsilon] \mapsto \left( \frac{\epsilon_j}{c} \right)_{j \in J(y)}.
    \end{equation}
    Indeed, in the affine coordinates $\zeta_j = y_j/y_i$ with $i \not\in J(y)$, one has $\zeta_j \mapsto (\epsilon_j/c)\zeta_j$, and $\Phi$ identifies $\zeta_j$ with the corresponding root coordinate.
    The homomorphism $\theta_y$ is injective, because, if its value on $[\epsilon]$ is trivial, then $\epsilon_j = c$ also for $j \in J(y)$.
    Thus all entries of $\epsilon$ equal $c$, so $\epsilon$ is diagonal and $[\epsilon] = 1 \in G_y$.
    We can also check that $\theta_y$ is surjective, so $\theta_y$ is actually an isomorphism.

    Since $\Phi(\alpha) \in \Aut_{\cU_n}(\Phi(\xi))$ is the identity, we have $\theta_y(g(\overline{t})) = 1$.
    Hence $g(\overline{t}) = 1$ for every geometric point $\overline{t} \ra T'$.
    Since $G$ is finite \'{e}tale, the inverse image under $g$ of the identity section is open and closed.
    It contains every geometric point of $T'$, and therefore equals $T'$.
    Thus $g = 1$, and so $\alpha_{T'}$ is the identity.
    By using fppf descent, we get $\alpha = \id_{\xi}$.
    Therefore, the automorphism homomorphism \zcref{autom_hom} has trivial kernel for every $T$ and $\xi$, and thus $\Phi$ is representable by algebraic spaces.
    
    Finally, we will check the \'{e}taleness of the morphism $\Phi$.
    The quotient atlas $p \colon Y \ra [Y/G]$ is a representable, surjective, finite \'{e}tale morphism.
    Also, by Proposition \zcref{lift_of_f_Y}, the morphism $\tilde{f}_Y$ is representable, finite, and \'{e}tale.
    Since $\Phi \circ p = \tilde{f}_Y$, $\Phi$ is representable by algebraic spaces, and \'{e}taleness is local on the source for the \'{e}tale topology (cf. \cite[\href{https://stacks.math.columbia.edu/tag/06M2}{Tag 06M2}]{stacks-project}), $\Phi$ is \'{e}tale. 
\end{proof}

\begin{Thm}\label{root=quotient}
    There is a natural equivalence of Deligne--Mumford stacks
    \begin{equation*}
        \Phi \colon [Y/G] \xrightarrow{\sim} \cU_n.
    \end{equation*}
    Under this equivalence, $\tilde{f}_Y \colon Y \ra \cU_n$ is a surjective, finite \'{e}tale, and representable morphism.	
\end{Thm}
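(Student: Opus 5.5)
The plan is to show that $\Phi\colon [Y/G]\ra \cU_n$ from Lemma \zcref{ealeness_geometric_stabilizers} is an equivalence, using the criterion of Lemma \zcref{criterion} at every closed point and then gluing.

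\textbf{Step 1: set up the comparison of coarse spaces.} Both stacks are separated Deligne--Mumford of finite type over $\bC$ by Lemma \zcref{separated_of_finite_type}. The coarse moduli space of $[Y/G]$ is $Y/G$, and that of $\cU_n$ is $U$. Since $f_Y$ is $G$-invariant, it factors through a morphism $\bar f\colon Y/G\ra U$. This $\bar f$ is finite, because $f_Y$ is finite. It is also birational of degree one, because over $U\setminus B^{\circ}$ the map $f_Y$ is a $G$-torsor: the generic fibre of $\bsq$ is a free $G$-orbit. As $U$ is smooth, hence normal, Zariski's main theorem shows that $\bar f$ is an isomorphism $Y/G\simeq U$. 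This gives the commutative square required in Lemma \zcref{criterion}.

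\textbf{Step 2: apply Lemma \zcref{criterion} and glue.} Let $\xi\in|[Y/G]|$ be a closed point, represented by $y\in Y$. By Lemma \zcref{ealeness_geometric_stabilizers}, $\Phi$ is étale at $\xi$. On stabilizers, the proof of that lemma identifies the induced map with $\theta_y\colon G_y\ra\mu_2^{J(y)}$ of \zcref{hom_geom_stab_groups}, and notes that $\theta_y$ is injective and surjective.
\begin{itemize}
\item Injectivity: a trivial value of $\theta_y([\epsilon])$ forces all entries of $\epsilon$ to be equal, so $[\epsilon]=1$.
\item Surjectivity: given signs $(\delta_j)_{j\in J(y)}$, set $\epsilon_j=\delta_j$ for $j\in J(y)$ and $\epsilon_i=1$ otherwise. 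This element fixes $y$, since $y_j=0$ for $j\in J(y)$, and maps to $(\delta_j)$.
\end{itemize}
Lemma \zcref{criterion} then gives an open $V\ni\bar f(\pi(\xi))$ in $Y/G$ with $\pi^{-1}(V)\simeq V\times_U\cU_n$ via $\Phi$. Since $\bar f$ is an isomorphism, these opens cover $U$, and the local isomorphisms are all restrictions of the single morphism $\Phi$. Hence $\Phi$ is an equivalence.

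An alternative route is to use that $\Phi$ is representable and étale, and check that it is proper and bijective on geometric points with isomorphic stabilizers. Then $\Phi$ is a representable étale monomorphism that is surjective and universally closed, hence an isomorphism. I would keep this as a fallback.

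\textbf{Step 3: the final assertion.} The quotient map $Y\ra[Y/G]$ is a $G$-torsor, so it is representable, finite étale and surjective. Therefore $\tilde f_Y=\Phi\circ p$ is as well; finiteness and representability were already shown in Proposition \zcref{lift_of_f_Y}.

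\textbf{Main obstacle.} I expect the hardest step to be verifying that the stabilizer map really is $\theta_y$ as computed in local charts. This requires comparing the $G_y$-action on the coordinates $\zeta_j=y_j/y_i$ with the $\mu_2^k$-action on $V^{\sharp}$ under the identification from Lemma \zcref{local_square_chart}. The needed point is that the étale rescaling $\zeta_\ell=s_\ell z_\ell$ is compatible with signs, since $s_\ell$ can be chosen $G_y$-invariant (the $\epsilon_\ell$ are $G_y$-invariant units).
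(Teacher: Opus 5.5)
Your proposal is correct and follows essentially the same route as the paper: you take the étaleness of $\Phi$ from the preceding lemma and the bijectivity of the stabilizer map $\theta_y$, apply the coarse-space criterion at each closed point over the identification $Y/G\simeq U$, and then glue. You also supply details the paper leaves implicit: the finite-birational argument for $Y/G\simeq U$, the explicit surjectivity of $\theta_y$, and the deduction of the final assertion about $\tilde f_Y$ from $\tilde f_Y=\Phi\circ p$.
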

\begin{proof}
We claim that $\Phi$ is an isomorphism \'{e}tale locally.
Then, by the \'{e}tale descent, $\Phi$ is an isomorphism globally.
To prove this claim, we use Lemma \zcref{criterion}.
As we checked in Lemma \zcref{separated_of_finite_type}, both $[Y/G]$ and $\cU_n$ are separated Deligne--Mumford stacks of finite type over $\bC$.
The morphism $\Phi \colon [Y/G] \ra \cU_n$ fits in the following diagram

 \[\begin{tikzcd}
	{[Y/G]} && {\mathcal{U}_n} \\
	\\
	Y/G && U.
	\arrow["\Phi", from=1-1, to=1-3]
	\arrow["{\pi_{[Y/G]}}"', from=1-1, to=3-1]
	\arrow["{\pi_n}", from=1-3, to=3-3]
	\arrow["\sim", from=3-1, to=3-3]
    \end{tikzcd}\]
By Lemma \zcref{ealeness_geometric_stabilizers}, the morphism $\Phi$ is \'{e}tale.
Moreover, from the proof of this lemma, we also know that the induced map \zcref{hom_geom_stab_groups} of geometric stabilizer groups at each closed point is bijective.
Therefore by Lemma \zcref{criterion}, for each closed point $x \in |[Y/G]|$, there exists an \'{e}tale open neighborhood $V \subseteq Y/G \simeq U$ of $\pi_{[Y/G]}(x)$ such that $V \times_{Y/G} [Y/G] \simeq V \times_U \cU_n$.
This completes the proof of this theorem.
\end{proof}

\subsection{Mixed braid groups and Orbifold fundamental groups}\label{mixed_braid_orbifold_section}
Note that
\begin{equation*}
    U \setminus B^{\circ} = \bP^n \setminus (\Delta \cup B) \simeq \UConf_n(\bP^1_{\bC} \setminus \{\lambda_0, \dots, \lambda_{n+2} \}).
\end{equation*}
Here, $\UConf_n(S)$ is the unordered configuration space of $n$ distinct points on $S$.
Let us denote $\pi_1(U\setminus B^{\circ}) = \pi_1 (\UConf_n(\bP^1_{\bC} \setminus \{\lambda_0, \dots, \lambda_{n+2} \}))$ by $SB_{n+3, n}$, and call it the {\it spherical mixed braid group}.
It is isomorphic to the mixed braid group $B_{n+2, n} := \pi_1(\UConf_n(\bC \setminus \{\lambda_1, \dots, \lambda_{n+2} \}))$ by identifying $\lambda_0 = \infty$ after applying a projective transformation.
Let $a_j$, $(j = 0, 1, \dots, n+2)$ be the meridian about the fixed puncture $\lambda_j$, and let $\sigma_i$, $(i = 1, \dots, n-1)$ be a standard half-twist.

The ramified $G$-covering map $f_Y \colon Y \ra U$ gives us the following short exact sequence
\begin{equation}\label{fYseq}
    1 \ra \pi_1(X \setminus (W \cup R)) = \pi_1(Y \setminus R^{\circ}) \ra SB_{n+3, n} \xrightarrow{\mu} G \ra 1. 
\end{equation}
Here, $\mu \colon SB_{n+3, n} \ra G$ is a monodromy map that satisfies
\begin{equation*}
    \mu(a_j) = \eta_j, \ \  \mu(\sigma_i) = 1_G,
\end{equation*}
where $\eta_j \in G$ is the class of the sign vector whose $j$th entry is $-1$ and whose other entries are $1$. 

Let $\pi_1^{\orb}(\cU_n)$ be the orbifold fundamental group of $\cU_n$.
Since $\pi_n \colon \cU_n \ra U$ is a coarse moduli space, and taking a $2$-root along a normal-crossing divisor imposes the square relation on the corresponding meridian \cite[Section 1.1]{Eys18}, we get 
\begin{equation}\label{orbifold_fundamental_group}
    \pi_1^{\orb}(\cU_n) \simeq SB_{n+3, n}/ \langle \langle a_0^2, \dots, a_{n+2}^2 \rangle \rangle_{\cB}.
\end{equation}
Here, $ \langle \langle a_0^2, \dots, a_{n+2}^2 \rangle \rangle_{\cB}$ is the normal closure of $\{a_0^2, \dots, a_{n+2}^2  \}$ within $SB_{n+3, n}$.

By Theorem \zcref{root=quotient}, $\tilde{f}_Y \colon Y \ra \cU_n$ is the surjective finite \'{e}tale morphism and gives us the following short exact sequence
\begin{equation}\label{tildeseq}
    1 \ra \pi_1(Y) \ra \pi_1^{\orb}(\cU_n) \xrightarrow{\tilde{\mu}} G \ra 1,
\end{equation}
where $\tilde{\mu}(a_j) = \eta_j$ and $\tilde{\mu}(\sigma_i) = 1_G$.

\begin{Cor}\label{ker_description_of_pi_1}
For every $n \geq 1$,
    \begin{equation*}
        \pi_1(X \setminus W) \simeq \ker (SB_{n+3, n}/ \langle \langle a_0^2, \dots, a_{n+2}^2 \rangle \rangle_{\cB} \xrightarrow{\tilde{\mu}} G).
    \end{equation*}
\end{Cor}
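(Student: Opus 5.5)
The corollary should follow by assembling the two exact sequences \zcref{orbifold_fundamental_group} and \zcref{tildeseq}. By Theorem \zcref{root=quotient}, the morphism $\tilde{f}_Y \colon Y \ra \cU_n$ is representable, finite, étale and surjective, and it factors as the quotient atlas $Y \ra [Y/G]$ followed by the equivalence $\Phi$. Since $Y = X \setminus W$ is connected (it is a nonempty Zariski open subset of the irreducible variety $X$), $Y \ra [Y/G]$ is a connected Galois $G$-cover in the orbifold sense. The Galois correspondence for étale fundamental groups of connected Deligne--Mumford stacks (equivalently, for the topological orbifold $[Y^{an}/G]$) then gives the exact sequence
\begin{equation*}
1 \ra \pi_1(Y) \ra \pi_1^{\orb}([Y/G]) \ra G \ra 1 .
\end{equation*}
Transporting it along $\Phi$ yields \zcref{tildeseq}. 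Identifying the middle term with $SB_{n+3,n}/\langle\langle a_0^2,\dots,a_{n+2}^2\rangle\rangle_{\cB}$ via \zcref{orbifold_fundamental_group} shows that $\pi_1(Y)$ is the kernel of the resulting map $\tilde\mu$.

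It remains to check that $\tilde\mu$ is the stated map, i.e.\ that it is induced by $\mu$ from \zcref{fYseq}. I would argue via the open embedding $\cU_n \setminus \pi_n^{-1}(B^\circ) \simeq U \setminus B^\circ$, over which $\cU_n$ coincides with $U$ because the root stack is an isomorphism away from the divisors. The inclusion induces the surjection $SB_{n+3,n} \ra \pi_1^{\orb}(\cU_n)$. The restriction of $\tilde f_Y$ over this open set is exactly the unramified $G$-cover $Y\setminus R^\circ \ra U\setminus B^\circ$. Naturality of the monodromy of $G$-torsors gives $\tilde\mu\circ(\text{quotient}) = \mu$, hence $\tilde\mu(a_j)=\eta_j$ and $\tilde\mu(\sigma_i)=1_G$.

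Consistency also needs checking: $\mu(a_j^2)=\eta_j^2=1$, so $\mu$ factors through the quotient. The value $\eta_j$ itself comes from the local chart of Lemma \zcref{local_square_chart}: a small loop around $B_j^\circ$ lifts to a path from $y$ to the point obtained by flipping the sign of $x_j$. The $\sigma_i$ act trivially because $\mu$ only records the sign data.

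The main obstacle is justifying \zcref{orbifold_fundamental_group} together with the Galois correspondence for $\cU_n$ rigorously, rather than the group theory. My first approach would be to use the chart \zcref{root_stack_chart}: locally $\cU_n$ is $[V^\sharp/\mu_2^k]$ with $V^\sharp$ smooth. A van Kampen argument for orbifolds (removing the codimension-one divisor and re-adding it with isotropy $\mu_2$) kills exactly the squares of the meridians, as in \cite[Section 1.1]{Eys18}. Removing the codimension-two strata $B_J^\circ$, $|J|\ge 2$, does not change $\pi_1$ of the smooth charts, so only the relations $a_j^2$ appear. Surjectivity of $\tilde\mu$ follows from the images $\eta_j$ generating $G$. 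Alternatively, surjectivity follows from connectedness of $Y$.
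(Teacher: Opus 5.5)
Your proposal is correct and follows the paper's route. The paper's proof consists exactly of combining the identification $\pi_1^{\orb}(\cU_n)\simeq SB_{n+3,n}/\langle\langle a_0^2,\dots,a_{n+2}^2\rangle\rangle_{\cB}$ with the exact sequence $1\to\pi_1(Y)\to\pi_1^{\orb}(\cU_n)\xrightarrow{\tilde\mu}G\to 1$ induced by $\tilde f_Y$, and your extra steps make explicit what the paper leaves implicit: connectedness of $Y$, factoring $\tilde f_Y$ through the Galois cover $Y\to[Y/G]$ via $\Phi$, and checking $\tilde\mu\circ\varphi_U=\mu$ on the open part $U\setminus B^\circ$. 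One small caution: run the Galois-correspondence argument with the topological orbifold fundamental group of $[Y^{an}/G]$ rather than the \'etale one, since the \'etale version only recovers profinite completions of the discrete groups in the statement.
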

\begin{proof}
The statement follows from \zcref{orbifold_fundamental_group} and \zcref{tildeseq}.
\end{proof}

\subsection{The Meridian-Quotient Construction}

The short exact sequences \zcref{fYseq} and \zcref{tildeseq} fit into the following commutative diagram:

\[\begin{tikzcd}
	&& 1 && 1 &&&& \\
	\\
	&& \ker(\varphi_Y) && \ker(\varphi_U) \\
	\\
	1 && \pi_1(Y \setminus R^{\circ}) &&   \pi_1(U \setminus B^{\circ}) = SB_{n+3, n} && G && 1 \\
	\\
	1 && \pi_1(Y) && \pi_1^{\orb}(\cU_n) \simeq SB_{n+3, n}/ \langle \langle a_0^2, \dots, a_{n+2}^2 \rangle \rangle_{\cB} && G && 1 \\
	\\
	&& 1 && 1
	\arrow[from=1-3, to=3-3]
	\arrow[from=1-5, to=3-5]
	\arrow[from=3-3, to=3-5]
	\arrow[from=3-3, to=5-3]
	\arrow[from=3-5, to=5-5]
	\arrow[from=5-1, to=5-3]
	\arrow[from=5-3, to=5-5]
	\arrow["{\varphi_Y}"', from=5-3, to=7-3]
	\arrow["\mu", from=5-5, to=5-7]
	\arrow["{\varphi_U}"', from=5-5, to=7-5]
	\arrow[from=5-7, to=5-9]
	\arrow[from=5-7, to=7-7]
	\arrow[from=7-1, to=7-3]
	\arrow[from=7-3, to=7-5]
	\arrow[from=7-3, to=9-3]
	\arrow["{\tilde{\mu}}", from=7-5, to=7-7]
	\arrow[from=7-5, to=9-5]
	\arrow[from=7-7, to=7-9]
\end{tikzcd}\]

\begin{Prop}
    \begin{enumerate}
        \item[(1)] \begin{equation*}
                     \ker (\varphi_Y) \simeq \ker (\varphi_U) = \langle\langle a_0^2, \dots, a_{n+2}^2 \rangle \rangle_{\cB}.
                   \end{equation*}
        \item[(2)] \begin{equation*}
                     \pi_1(X \setminus W) = \pi_1(Y) \simeq \ker(\mu)/ \langle\langle a_0^2, \dots, a_{n+2}^2 \rangle\rangle_{\cB}.
                   \end{equation*}
    \end{enumerate}
\end{Prop}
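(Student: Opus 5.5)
The plan is a diagram chase in the commutative diagram above. We only use the two short exact sequences \zcref{fYseq} and \zcref{tildeseq}, the identification \zcref{orbifold_fundamental_group}, and the fact that $G$ is finite.

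First we identify the maps. The vertical map $\varphi_U$ is the quotient map $SB_{n+3,n}\to SB_{n+3,n}/\langle\langle a_0^2,\dots,a_{n+2}^2\rangle\rangle_{\cB}$. By \zcref{orbifold_fundamental_group} it is surjective with kernel $\langle\langle a_0^2,\dots,a_{n+2}^2\rangle\rangle_{\cB}$. The map $\varphi_Y$ is induced by the inclusion $Y\setminus R^{\circ}\hookrightarrow Y$. It is surjective because $R^{\circ}$ has complex codimension one in the smooth variety $Y$, so removing it does not disconnect $Y$ and loops can be pushed off it. Its kernel is the normal closure in $\pi_1(Y\setminus R^{\circ})$ of the meridians of the components $R_j^{\circ}$.

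Next we check that the squares commute. The right square commutes because $\tilde\mu\circ\varphi_U=\mu$; both send $a_j\mapsto\eta_j$ and $\sigma_i\mapsto 1_G$. The left square commutes because $\tilde f_Y$ restricted to $Y\setminus R^{\circ}$ is the étale $G$-cover $Y\setminus R^{\circ}\to U\setminus B^{\circ}$, composed with the open inclusion $U\setminus B^{\circ}\hookrightarrow\cU_n$.

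The snake lemma now does the work. Apply it to the morphism of short exact sequences given by the middle and bottom rows, with vertical maps $\varphi_Y$, $\varphi_U$ and $\id_G$. The kernel and cokernel of $\id_G$ are trivial. Hence $\ker\varphi_Y\to\ker\varphi_U$ is an isomorphism, and the cokernels of $\varphi_Y$ and $\varphi_U$ agree, which is consistent with surjectivity. The snake lemma is valid for groups here, since all the subgroups involved are normal: $\ker\varphi_Y$ is normal in $\pi_1(Y\setminus R^{\circ})$, which is itself normal in $SB_{n+3,n}$. If one prefers to avoid it, the same conclusion follows directly. An element $g\in\ker\varphi_U$ satisfies $\mu(g)=\tilde\mu(\varphi_U(g))=1$, so $g$ lies in $\ker\mu=\pi_1(Y\setminus R^{\circ})$. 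Commutativity of the left square then gives $\varphi_Y(g)=1$, because $\pi_1(Y)\to\pi_1^{\orb}(\cU_n)$ is injective. This proves (1) and shows that $\langle\langle a_0^2,\dots,a_{n+2}^2\rangle\rangle_{\cB}\subseteq\ker\mu$.

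For (2), surjectivity of $\varphi_Y$ together with (1) gives
\begin{equation*}
\pi_1(Y)\simeq \pi_1(Y\setminus R^{\circ})/\ker\varphi_Y=\ker(\mu)/\langle\langle a_0^2,\dots,a_{n+2}^2\rangle\rangle_{\cB}.
\end{equation*}
The same statement can be checked independently by the third isomorphism theorem applied to $\ker\tilde\mu$, which equals $\pi_1(Y)$ by Corollary \zcref{ker_description_of_pi_1}.

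The main obstacle is justifying that the squares commute and that $\varphi_Y$ is surjective, that is, the compatibility between the classical covering sequence \zcref{fYseq} and the stacky sequence \zcref{tildeseq}. I would argue this using Theorem \zcref{root=quotient}: restricted over $U\setminus B^{\circ}$, where the root stack is an isomorphism onto $U\setminus B^{\circ}$, the étale cover $\tilde f_Y$ is exactly $f_Y$. The induced maps on fundamental groups are therefore compatible with the base points chosen in the open dense complements.
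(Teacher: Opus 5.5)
Your proposal is correct and follows essentially the same diagram chase as the paper. Since $\ker\varphi_U=\langle\langle a_0^2,\dots,a_{n+2}^2\rangle\rangle_{\cB}$ lies in $\ker\mu$, you get $\ker\varphi_Y=\ker\mu\cap\ker\varphi_U=\ker\varphi_U$, and (2) follows from the surjectivity of $\varphi_Y$; you also justify the commutativity of the left square and the surjectivity of $\varphi_Y$, which the paper simply reads off from the diagram.
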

\begin{proof}
     (1) $\ker (\varphi_U) = \langle\langle a_0^2, \dots, a_{n+2}^2 \rangle \rangle_{\cB}$ follows from \zcref{orbifold_fundamental_group}.
     Since $a_j^2 \in \ker(\mu)$ for all $j$, $\langle\langle a_0^2, \dots, a_{n+2}^2 \rangle \rangle_{\cB} \subset \ker (\mu)$.
     From the above commutative diagram, $\ker(\varphi_Y) = \ker(\mu) \cap \langle\langle a_0^2, \dots, a_{n+2}^2 \rangle \rangle_{\cB} = \langle\langle a_0^2, \dots, a_{n+2}^2 \rangle \rangle_{\cB}$.
    Statement (2) follows from (1).
\end{proof}

\section{Mixed Braid Groups and Reidemeister--Schreier Rewriting}\label{Section_Reidemister_Schreier}
In this section, we describe $\ker(\tilde{\mu} \colon \pi_1^{\orb}(\cU_n) \ra G)  \simeq \pi_1(Y) = \pi_1(X \setminus W)$ by their generators and relations.
To do so, we apply the {\it{Reidemeister--Schreier Method}}.
This method gives us the explicit description of $\pi_1(X \setminus W)$ (Theorem \zcref{Explicit_description_of_fundamental_group}).

\subsection{A Presentation of the mixed Braid groups due to Lambropoulou}\label{presentation}
Here, we introduce a presentation of the mixed braid groups $B_{n+2, n}$ due to Lambropoulou \cite{Lam00}.
When $n = 1$, we have $B_{3,1} = \pi_1(\bC \setminus\{\lambda_1, \lambda_2, \lambda_3 \}) \simeq F_3$, which is a free group of rank $3$.

Assume $n \geq 2$, and put $m := n + 2$.
We identify $\lambda_0$ with $\infty$.
After an orientation-preserving homeomorphism fixing $\infty$, we may choose a topological model with base configuration $\bz := \{z_1, \dots, z_n \}$, in which
\begin{equation*}
    \lambda_1 < \cdots < \lambda_m < z_1 < \cdots < z_n
\end{equation*}
lie on the real axis in $\bC$.
Recall that
\begin{equation*}
    B_{m, n} = B_{n+2, n} := \pi_1(\UConf_n(\bC \setminus \{\lambda_1, \dots, \lambda_{n+2} \}), \bz).
\end{equation*}
Via the standard configuration space identification, adjoining the $m$ fixed strands at $\lambda_1, \dots, \lambda_m$ identifies this group with Lambropoulou’s subgroup $B_{m, n} \subset B_{m+n}$, where $B_{m, n}$ is regarded as a braid group with $n$ moving strands and $m$ fixed strands.

We now specify based representatives of the meridians $a_j$ introduced in \zcref{mixed_braid_orbifold_section}.
Read path products and braid words chronologically from left to right.
Let us use upper semicircular access paths from $z_1$ to $\lambda_1, \dots, \lambda_m, z_2, \dots, z_n$, and the upward ray to $\infty$.
Separate these paths slightly near $z_1$, preserving their isotopy classes, so that their distinct initial directions have counter-clockwise cyclic order
\begin{equation*}
    \infty, \lambda_1, \dots, \lambda_m, z_2, \dots, z_n.
\end{equation*}
Finite meridians are counter-clockwise, and the meridian at $\infty$ is positive in the local coordinate $w = 1/z$ of $\bP^1_{\bC}$.
The representative of $a_j$ for $j = 0, \dots, m$ moves only $z_1$ around $\lambda_j$ along the chosen access path.
Let $\sigma_i$ be the counter-clockwise half-twist exchanging $z_i$ and $z_{i+1}$.

Let $\tau_1, \dots, \tau_{m+n-1}$ denote the counter-clockwise Artin generators of $B_{m+n}$.
Under the above inclusion $B_{m,n} \subset B_{m+n}$, 
\begin{align*}
 a_j&=\tau_m\cdots\tau_{j+1} \tau_j^2 \tau_{j+1}^{-1}\cdots \tau_m^{-1},  &&(1 \leq j \leq m),\\
 \sigma_i&=\tau_{m+i} &&(1 \leq i \leq n-1).
\end{align*}
These coincide with Lambropoulou's generators $a_{j, m+1}$ and $\sigma_{m+i}$ respectively in \cite[Section 1]{Lam00}.
Therefore, \cite[Theorem 3]{Lam00} gives an affine presentation with generators
\begin{equation*}
    a_1, \dotsm, a_m, \ \ \sigma_1, \dots, \sigma_{n-1},
\end{equation*}
with relations
\begin{align}
 \sigma_i\sigma_j&=\sigma_j\sigma_i &&(1 \leq i < j \leq n-1, \ j-i > 1),\tag{A1}\label{rel:A1}\\
 \sigma_i\sigma_{i+1}\sigma_i&=\sigma_{i+1}\sigma_i\sigma_{i+1}
 &&(1\le i\le n-2),\tag{A2}\label{rel:A2}\\
 a_k\sigma_i&=\sigma_i a_k &&(1\le k\le m,\ 2\le i\le n-1),\tag{M1}\label{rel:M1}\\
 \sigma_1a_k\sigma_1a_k&=a_k\sigma_1a_k\sigma_1
 &&(1\le k\le m),\tag{M2}\label{rel:M2}\\
 a_k(\sigma_1a_\ell\sigma_1^{-1})
 &=(\sigma_1a_\ell\sigma_1^{-1})a_k
 &&(1 \leq \ell<k \leq m).\tag{M3}\label{rel:M3}
\end{align}

For $2 \leq s \leq n$, put
\begin{equation*}
   P_m := a_1 \cdots a_m, \ \ \  A_{1s} := \sigma_{s-1}\cdots \sigma_2 \sigma_1^2 \sigma_2^{-1} \cdots \sigma_{s-1}^{-1}, \ \ \ \Omega_n := A_{12}A_{13}\cdots A_{1n}.
\end{equation*}
With $z_2, \dots, z_n$ fixed, the positive meridian around $z_s$ represents $A_{1s}$.
By using the above fixed system of base paths, the spherical relation is
\begin{equation}\label{spherical_relation}
    a_0P_m\Omega_n = 1, \ \ \ a_0 = (P_m\Omega_n)^{-1}.
\end{equation}

Let us consider \zcref{orbifold_fundamental_group}, i.e.,
\begin{equation*}
    \pi_1^{\orb}(\cU_n) \simeq SB_{m+1, n}/ \langle \langle a_0^2, \dots, a_m^2 \rangle \rangle_{\cB}.
\end{equation*}
The affine presentation of $\pi_1^{\orb}(\cU_n)$ is obtained from \zcref{rel:A1}--\zcref{rel:M3} by adding square relations
\begin{equation}\label{square_relations}
    a_k^2 = 1 \ \ (1 \leq k \leq m), \ \ \ \tag{SQ}
\end{equation}
and the {\it{infinity-square relation}}
\begin{equation}\label{infinity_square}
    (P_m \Omega_n)^2 = 1\tag{SQ0}.
\end{equation}
The last relation is the eliminated form of $a_0^2 = 1$.

\begin{Lem}\label{number_of_relators}
    Let $\cR$ be the ambient relator list consisting of \zcref{rel:A1}--\zcref{rel:M3} and \zcref{square_relations}--\zcref{infinity_square}.
    Then, 
    \begin{equation*}
        |\cR| = 2n^2 + 2n + 3.
    \end{equation*}
\end{Lem}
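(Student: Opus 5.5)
The plan is a direct count: I count the relators in each family \zcref{rel:A1}--\zcref{rel:M3}, \zcref{square_relations} and \zcref{infinity_square} as functions of $n$ and $m = n+2$, then substitute $m = n+2$ and add. Throughout I assume $n \geq 2$, as in \zcref{presentation}, and I treat each listed instance as one formal relator, with no identification of coincident relators. I will check that the counts are correct at the boundary values $n = 2, 3$, where some families are empty.

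The counts, family by family, are as follows.
\begin{itemize}
\item \zcref{rel:A1} ranges over pairs $1 \leq i < j \leq n-1$ with $j - i > 1$. That is all $\binom{n-1}{2}$ pairs minus the $n-2$ adjacent ones, giving $\frac{(n-2)(n-3)}{2}$. This is $0$ for $n = 2, 3$, as it should be.
\item \zcref{rel:A2} contributes $n-2$.
\item \zcref{rel:M1} ranges over $1 \leq k \leq m$ and $2 \leq i \leq n-1$, giving $m(n-2)$.
\item \zcref{rel:M2} contributes $m$.
\item \zcref{rel:M3} ranges over pairs $\ell < k$ in $\{1, \dots, m\}$, giving $\binom{m}{2} = \frac{m(m-1)}{2}$.
\item \zcref{square_relations} contributes $m$.
\item \zcref{infinity_square} contributes $1$.
\end{itemize}

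Substituting $m = n+2$, I group the two quadratic half-integer terms together:
\begin{equation*}
\frac{(n-2)(n-3)}{2} + \frac{(n+2)(n+1)}{2} = \frac{2n^2 - 2n + 8}{2} = n^2 - n + 4.
\end{equation*}
The remaining terms sum to
\begin{equation*}
(n-2) + (n+2)(n-2) + (n+2) + (n+2) + 1 = n^2 + 3n - 1.
\end{equation*}
Adding the two gives $|\cR| = 2n^2 + 2n + 3$, as claimed.

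No step is a real obstacle. The only care needed is with index ranges: excluding adjacent pairs in \zcref{rel:A1}, and confirming that the empty ranges for small $n$ agree with the closed formulas. For $n = 2$ the formula gives $15$, which matches a direct count of $0 + 0 + 0 + 4 + 6 + 4 + 1$.
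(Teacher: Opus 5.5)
Your proposal is correct and follows the paper's proof: a family-by-family count $\frac{(n-2)(n-3)}{2} + (n-2) + m(n-2) + m + \frac{m(m-1)}{2} + m + 1$, followed by substituting $m = n+2$. Your grouping of the two quadratic terms, your note that $n \geq 2$ is needed for the index-range counts to be valid, and your $n=2$ check are harmless additions to the same argument.
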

\begin{proof}
    By considering each index range, we get
    \begin{equation*}
    \begin{split}
         |\cR| &= \frac{(n-2)(n-3)}{2} + (n-2) + m(n-2) + m + \frac{m(m-1)}{2} + m + 1 \\
                &= 2n^2 + 2n + 3.
    \end{split}
    \end{equation*}
\end{proof}

\subsection{Reidemeister--Schreier Method}
Here, we recall the Reidemeister--Schreier method.
Given a presentation of a group $G$, this method shows how to compute a presentation for a subgroup $H$ of $G$.
The reference here is \cite[Chapter I\hspace{-1.2pt}I, Section 4]{LS77}.

\begin{Def}
    A Schreier transversal of a subgroup $H$ of $F$, free with basis $\cA$, is a subset $T$ of $F$ such that, for distinct $t$ in $T$, the cosets $Ht$ are distinct, that the union of $Ht$ is $F$, and such that every initial segment of an element of $T$ itself belongs to $T$.
\end{Def}

\begin{Prop}[Reidemeister--Schreier method, {\cite[Proposition 4.1]{LS77}}]\label{RS_method}
    Let $G = \langle \cA|R \rangle = F/N$, $F$ free with basis $\cA$ and $N$ the normal closure of $R$ in $F$, and let $\phi$ be the canonical map from $F$ onto $G$.
    Let $H$ be a subgroup of $G$, with $\hat{H}$ the inverse image of $H$ under $\phi$, and $T$ a Schreier transversal for $\hat{H}$ in $F$.
    For $w$ in $F$ we define $\overline{w}$ by the condition that 
    \begin{equation*}
        \hat{H}w = \hat{H}\overline{w}, \ \ \overline{w} \in T.
    \end{equation*}
    For $t \in T$ and $x \in \cA$ we define 
    \begin{equation*}
        \gamma(t, x) = tx(\overline{tx})^{-1}, \ \gamma(t, x^{-1}) = tx^{-1}(\overline{tx^{-1}})^{-1} = \gamma(tx^{-1}, x)^{-1}.
    \end{equation*}
    Then $H$ has a presentation $H = \langle \cA_1^* | R_1^* \rangle$, as follows.

    Let $\cA_1$ consist of all elements $\gamma(t, x) \neq 1$ for $t \in T$, $x \in \cA$, let $\cA_1^*$ be a set of elements $\gamma(t, x)^*$ in one-to-one correspondence with those of $\cA_1$, and let $F_1$ be the free group with basis $\cA_1^*$.
    Define a function $\tau$ from $F$ to $F_1$ as follows:\\
    if $w = y_1\cdots y_n$, $y_i \in \cA \cup \cA^{-1}$, then
    \begin{equation*}
        \tau(w) = \gamma(1, y_1)^*\cdots\gamma(\overline{y_1\cdots y_{i-1}}, y_i)^*\cdots\gamma(\overline{y_1\cdots y_{n-1}}, y_n)^*.
    \end{equation*}
    Then $R^*_1$ consists of all $\tau(trt^{-1})$ for $t \in T$ and $r \in R$.
\end{Prop}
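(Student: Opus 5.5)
The plan is to reduce the statement to the Nielsen--Schreier theorem for the free group $F$ and then pass to the quotient by $N$. Since $H=\phi(\hat H)$ and $N=\ker\phi\subseteq\hat H$, we have $H\simeq\hat H/N$. So it suffices to prove two things: first, that $\hat H$ is free on the nontrivial elements $\gamma(t,x)$ and that $\tau$ restricted to $\hat H$ is the resulting isomorphism $\hat H\to F_1$; second, that $N$, viewed as a normal subgroup of $\hat H$, is the normal closure in $\hat H$ of the elements $trt^{-1}$ with $t\in T$ and $r\in R$.

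\textbf{Step 1: the $\gamma(t,x)$ generate $\hat H$ and $\tau$ rewrites correctly.} Each $\gamma(t,x)$ lies in $\hat H$, because $tx$ and $\overline{tx}$ lie in the same coset. For a word $w=y_1\cdots y_n$, write $w_i=y_1\cdots y_i$. The telescoping identity
\begin{equation*}
w=\Bigl(\prod_{i=1}^n \overline{w_{i-1}}\,y_i\,\overline{w_i}^{-1}\Bigr)\,\overline{w}
\end{equation*}
holds, and each factor is $\gamma(\overline{w_{i-1}},y_i)^{\pm1}$ by the convention for inverse letters. Hence for $w\in\hat H$ (so $\overline w=1$), the image of $\tau(w)$ under $\gamma^*\mapsto\gamma$ equals $w$. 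This shows that the $\gamma(t,x)$ generate $\hat H$. Next I will check that $\tau$ is well defined on $F$: inserting a cancelling pair $yy^{-1}$ inserts $\gamma\gamma^{-1}$. I will also check that $\tau(uv)=\tau(u)\tau(v)$ whenever $u\in\hat H$, since $\overline{u\,v'}=\overline{v'}$ for any prefix $v'$ of $v$. So $\tau|_{\hat H}\colon\hat H\to F_1$ is a homomorphism. It is right inverse to the map $\beta\colon F_1\to\hat H$, $\gamma^*\mapsto\gamma$, and it satisfies $\tau(\gamma(t,x))=\gamma(t,x)^*$, because $\tau(t)$ consists only of trivial symbols when $t\in T$. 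That last computation uses the Schreier (prefix-closed) property of $T$. Hence $\beta$ and $\tau|_{\hat H}$ are mutually inverse isomorphisms.

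\textbf{Step 2: identify $N$ inside $\hat H$.} $N$ is generated by the conjugates $wrw^{-1}$ with $w\in F$. Writing $w=ht$ with $h\in\hat H$ and $t\in T$ gives $wrw^{-1}=h(trt^{-1})h^{-1}$. So $N$ is the normal closure in $\hat H$ of $\{trt^{-1}\}$. Applying the isomorphism $\tau|_{\hat H}$ then yields $H\simeq\hat H/N\simeq F_1/\langle\langle \tau(trt^{-1})\rangle\rangle=\langle\cA_1^*\mid R_1^*\rangle$.

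\textbf{Main obstacle.} The delicate point is the claim in Step 1 that $\tau(\gamma(t,x))=\gamma(t,x)^*$ and that $\tau$ is multiplicative on $\hat H$. This is where prefix-closedness of $T$ is essential. Concretely, I will verify that for $t=y_1\cdots y_k\in T$ each prefix satisfies $\overline{y_1\cdots y_i}=y_1\cdots y_i$, so every symbol $\gamma(\overline{y_1\cdots y_{i-1}},y_i)$ occurring in $\tau(t)$ is trivial. Likewise, the trailing letters of $\overline{tx}^{-1}$ contribute only trivial symbols. Once this is checked, injectivity of $\beta$ (freeness of $\hat H$ on $\cA_1$) follows formally from $\tau\circ\beta=\mathrm{id}$ on generators.
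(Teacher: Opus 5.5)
Your proposal is correct. It follows the standard argument behind the citation; the paper gives no proof of its own and simply quotes Lyndon--Schupp. The argument is to identify $\hat H$ with $F_1$ via $\tau|_{\hat H}$, observe that $N\subseteq\hat H$ is the normal closure in $\hat H$ of the elements $trt^{-1}$ (since every $w\in F$ factors as $w=h\overline{w}$ with $h\in\hat H$), and transport this through the isomorphism.

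The only difference is that you prove freeness of $\hat H$ on the nontrivial $\gamma(t,x)$ directly, from $\tau|_{\hat H}\circ\beta=\mathrm{id}_{F_1}$ and $\beta\circ\tau|_{\hat H}=\mathrm{id}_{\hat H}$; this is exactly where prefix-closedness of $T$ enters. Lyndon--Schupp instead import freeness from the Schreier-transversal form of the Nielsen--Schreier theorem, so your version is self-contained.
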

In the following sections, we apply this method to the group $\pi_1^{\orb}(\cU_n)$ and its subgroup $\ker(\tilde{\mu}) \simeq \pi_1(Y)$.

\subsection{The Schreier transversal and the Schreier generators}
Let $F := F(a_1, \dots, a_m, \sigma_1, \dots, \sigma_{n-1})$ be a free group on a basis $\cA := \{a_1, \dots, a_m, \sigma_1, \dots, \sigma_{n-1} \}$ with $m = n+2$, and let $\hat{\mu} \colon F \ra G$ be the composite of the presentation map $F \ra \pi_1^{\orb}(\cU_n)$ and $\tilde{\mu} \colon \pi_1^{\orb}(\cU_n) \ra G$.
Notice that $\hat{\mu}$ sends $a_j$ to $\eta_j$ and every $\sigma_i$ to $1_G$, where $\eta_j \in G$ is the class of the sign vector with its only negative entry in position $j$.
Put $\hat{H} := \ker(\hat{\mu})$ and $H:= \ker(\tilde{\mu})$.
Thus, $\hat{H}$ is a free subgroup of $F$, whereas $H$ is its quotient by the ambient relators.

Let $S := \{1, \dots, m \}$.
For $I = \{i_1< \cdots < i_s \}  \subset S$, set
\begin{equation*}
    a_I := a_{i_1}\cdots a_{i_s}, \ a_{\emptyset} := 1.
\end{equation*}
Then, we can choose a Schreier transversal 
\begin{equation*}
    T := \{a_I \ | \ I \subset S \}, \ \ |T| = 2^m
\end{equation*}
for $\hat{H}$.

\begin{Lem}\label{state_I_lemma}
    For $w \in F$, let $e_j(w)$ be the exponent sum of $a_j$ in $w$ and put
    \begin{equation*}
        I(w) = \{j \in S \ | \ e_j(w) \equiv 1  \pmod 2 \}.
    \end{equation*}
Then
\begin{equation*}
    \hat{\mu}(w) = \prod_{j \in I(w)} \eta_j, \ \ \bar{w} = a_{I(w)}, \ \ \hat{H}w = \hat{H} a_{I(w)}.
\end{equation*}
In particular, $w \in \hat{H}$ if and only if $I(w) = \emptyset$.
Furthermore, 
\begin{equation*}
    I(vw) = I(v) \triangle I(w), \ \ I(w^{-1}) = I(w).
\end{equation*}
    Here, $I(v) \triangle I(w)$ is the symmetric difference
    \begin{equation*}
        I(v) \triangle I(w) := (I(v) \setminus I(w)) \cup (I(w) \setminus I(v)).
    \end{equation*}
    A letter $\sigma_i^{\pm}$ leaves the state $I$ unchanged, whereas $a_j^{\pm}$ changes $I$ to $I \triangle \{j \}$.
\end{Lem}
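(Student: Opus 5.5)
The proof amounts to observing that $\hat{\mu}$ factors through the mod $2$ exponent-sum vector of the $a_j$. Since $G$ is abelian, every element of exponent $2$, and $\hat{\mu}$ is a homomorphism from the free group $F$, it factors through $F \to F^{\ab} \otimes \bZ/2\bZ \simeq (\bZ/2\bZ)^{m+n-1}$. The generators map as $\hat{\mu}(a_j) = \eta_j$ and $\hat{\mu}(\sigma_i) = 1_G$. Hence $\hat{\mu}(w) = \prod_j \eta_j^{e_j(w)} = \prod_{j \in I(w)} \eta_j$. The additivity statements come directly from additivity of exponent sums: $e_j(vw) = e_j(v) + e_j(w)$ and $e_j(w^{-1}) = -e_j(w)$. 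Reducing mod $2$ gives $I(vw) = I(v) \triangle I(w)$ and $I(w^{-1}) = I(w)$. The statement about single letters is the special case $w = \sigma_i^{\pm 1}$ (where $I = \emptyset$) or $w = a_j^{\pm 1}$ (where $I = \{j\}$).

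The key step is to show that the elements $\eta_1, \dots, \eta_m$ are linearly independent in $G \simeq (\bZ/2\bZ)^{m+1}/\langle(1,\dots,1)\rangle$, written additively, where $m = n+2$ and the coordinates are indexed $0,\dots,m$. Suppose $\prod_{j \in I} \eta_j = 1_G$. Then the sign vector with $-1$ exactly in the positions of $I$ equals $(1,\dots,1)$ or $(-1,\dots,-1)$. The latter would need a $-1$ in position $0$, which is impossible since $0 \notin S$. Hence $I = \emptyset$. Therefore $\hat{\mu}(w) = 1$ if and only if $I(w) = \emptyset$, which gives the criterion $w \in \hat{H} \iff I(w) = \emptyset$. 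More generally, $\hat{\mu}(v) = \hat{\mu}(w)$ if and only if $I(v) = I(w)$, so $I$ induces a bijection between $F/\hat{H}$ and the subsets of $S$. This bijection is also consistent with $|G| = 2^{m}$ and $|T| = 2^m$.

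Finally, the coset identification: $I(a_{I(w)}) = I(w)$, since the word $a_{i_1}\cdots a_{i_s}$ contains each $a_{i_k}$ exactly once. So $I(w a_{I(w)}^{-1}) = I(w) \triangle I(w) = \emptyset$, hence $w a_{I(w)}^{-1} \in \hat{H}$, that is, $\hat{H}w = \hat{H}a_{I(w)}$. Because distinct $a_I$ have distinct $I$-values, they lie in distinct cosets. The representative in $T$ is therefore unique, and $\bar{w} = a_{I(w)}$.

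The only point requiring care is the independence of the $\eta_j$ for $j \ge 1$, which uses the fact that the index $0$ is excluded from $S$ (because $a_0$ was eliminated via the spherical relation). I do not anticipate any real obstacle. Along the way one can note that the set $T$ is prefix-closed, since every prefix of $a_{i_1}\cdots a_{i_s}$ is $a_{\{i_1,\dots,i_r\}}$. This confirms that $T$ is a Schreier transversal, although the lemma does not require it.
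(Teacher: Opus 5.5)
Your proposal is correct and follows essentially the same route as the paper: you write $\hat{\mu}(w)=\prod_j \eta_j^{e_j(w)}$ using that $G$ is abelian, reduce the exponents mod $2$, deduce $\hat{H}w=\hat{H}a_{I(w)}$ from $w a_{I(w)}^{-1}\in\hat{H}$, and get the rest from the independence of $\eta_1,\dots,\eta_m$ and the additivity of exponent sums. The paper only asserts that independence and that $T$ is a transversal, whereas you prove both explicitly: coordinate $0$ is never flipped because $0\notin S$, and distinct $a_I$ therefore lie in distinct cosets.
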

\begin{proof}
    We first compute the image $\hat{\mu}(w)$ of a word $w \in F$.
    For each $j$, the exponent sum $e_j(w)$ counts the occurrences of $a_j$ with sign $+1$ and those of $a_j^{-1}$ with sign $-1$.
    Since $G$ is abelian and the $\sigma_j$ have trivial image, we may collect the contributions of each $a_j$ in $G$, obtaining
    \begin{equation*}
        \hat{\mu}(w) = \prod_{j = 1}^m \eta_j^{e_j(w)}.
    \end{equation*}
    As $\eta_j^2 = 1_G$, the factor $\eta_j^{e_j(w)}$ depends only on the parity of $e_j(w)$.
    More precisely, it is $1_G$ if $e_j(w)$ is even, and $\eta_j$ if $e_j(w)$ is odd.
    Therefore
    \begin{equation*}
        \hat{\mu}(w)= \prod_{j \in I(w)} \eta_j.
    \end{equation*}

    We next identify the chosen representative of the right coset containing $w$.
    Write
    \begin{equation*}
        I(w) = \{i_1 < \cdots < i_s \}.
    \end{equation*}
    By definition, $a_{I(w)} = a_{i_1}\cdots a_{i_s}$ with $a_{\emptyset} =1$, and hence
    \begin{equation*}
        \hat{\mu}(a_{I(w)}) = \prod_{j \in I(w)}\eta_j = \hat{\mu}(w).
    \end{equation*}
    It follows that $w a_{I(w)}^{-1} \in \ker(\hat{\mu}) = \hat{H}$.
    Thus $w = h a_{I(w)}$ for some $h \in \hat{H}$, so
    \begin{equation*}
        \hat{H}w = \hat{H} a_{I(w)}.
    \end{equation*}
    Since $a_{I(w)}$ belongs to the chosen transversal $T$, it is the representative denoted by $\bar{w}$.
    Consequently, $\bar{w} = a_{I(w)}$.
    Moreover, the independence of $\eta_1, \dots,\eta_m$ implies that $\prod_{j \in I(w)} \eta_j = 1_G$ if and only if $I(w) = \emptyset$.
    Therefore $w \in \hat{H}$ if and only if $I(w) = \emptyset$.

    Finally, exponent sums satisfy
    \begin{equation*}
        e_j(vw) = e_j(v) + e_j(w), \ \ e_j(w^{-1}) = - e_j(w).
    \end{equation*}
    The first sum is odd precisely when exactly one of its summands is odd.
    Hence
    \begin{equation*}
        I(vw) = I(v) \triangle I(w).
    \end{equation*}
    Negating an integer does not change its parity, so $I(w^{-1}) = I(w)$.
    In particular, $I(a_j^{\pm1}) = \{ j\}$ and $I(\sigma_i^{\pm1}) = \emptyset$.
    Thus, when a word is read from left to right, adding $a_j^{\pm1}$ to the state $I$ changes the state to $I\triangle \{ j\}$, whereas adding $\sigma_i^{\pm1}$ leaves the state unchanged.
\end{proof}

Set 
\begin{equation*}
    I \triangle j := I \triangle\{j \} = 
        \begin{cases}
        I\cup \{j \}   &   \text{if}\ j \not\in I   \\
        I \setminus \{j \}         &   \text{if}\ j \in I,
    \end{cases}
\end{equation*}
and define 
\begin{equation}\label{schreier_basis}
    a_{I, j} := \gamma(a_I, a_j) = a_I a_j (a_{I \triangle j})^{-1}, \ \sigma_{I, i} := \gamma(a_I, \sigma_i) = a_I \sigma_i a_I^{-1}.
\end{equation}
as the Schreier generators.
Let $F_1$ be the free group on the non-identity Schreier generators corresponding to $a_{I,j}$ and $\sigma_{I, i}$.

Reidemeister--Schreier method (Proposition \zcref{RS_method}) tells us that the non-identity elements $a_{I, j}, \sigma_{I, i}$ form a free basis of $\hat{H}$.
We henceforth identify $F_1$ with $\hat{H}$ by identifying each abstract Schreier generator with its defining word in $F$.

\subsection{The rewriting map}
In this section, we define the rewriting map
\begin{equation*}
    \tau_I \colon F \ra F_1,
\end{equation*}
where $F_1$ is the free group on the Schreier generators $a_{I,j}$ and $\sigma_{I, i}$ in \zcref{schreier_basis}.

The standard rewriting map
\begin{equation*}
    \tau \colon F \ra F_1
\end{equation*}
in Proposition \zcref{RS_method} reads a word from left to right, starting in the identity coset $\hat{H}$, whose state is $\emptyset$.
We now introduce a version that starts in an arbitrary coset state.
Its purpose is to compute and rewrite the conjugate relators required by that proposition without repeatedly writing the surrounding transversal words.

Let $\cR$ be the relators list \zcref{rel:A1}--\zcref{rel:M3} and \zcref{square_relations}--\zcref{infinity_square} of the ambient presentation in \zcref{presentation}.
Proposition \zcref{RS_method} requires us to rewrite $a_I r a_I^{-1}$ for every $I \subset S$ and $r \in \cR$.
Since every ambient relator $r \in \cR$ belongs to $\hat{H}$, Lemma \zcref{state_I_lemma} gives us $I(r) = \emptyset$.
Reading this conjugate from the identity coset therefore gives us the state sequence
\begin{equation*}
    \emptyset \xrightarrow{a_I} I \xrightarrow{r} I \xrightarrow{a_I^{-1}} \emptyset.
\end{equation*}
Thus, although the entire conjugate is read from state $\emptyset$, its middle part $r$ is read from state $I$.
Let us denote the rewriting of this middle part by $\tau_I(r)$ that corresponds to $\tau(a_I r a_I^{-1})$.
More generally, we shall define $\tau_I(w)$ for any word $w \in F$.
The subscript $I$ specifies the state immediately before reading $w$.

\subsubsection{Definition of the rewriting map $\tau_I$}
Fix $I \subset S$, and write
\begin{equation*}
    w = y_1\cdots y_N, \ \ y_k \in \cA \cup \cA^{-1}.
\end{equation*}
Let
\begin{equation*}
    p_0 = 1, \ \ p_k = y_1\cdots y_k \ \ (1 \leq k \leq N)
\end{equation*}
be its prefixes.
Define the state after reading the first $k$ letters, starting from $I$, by
\begin{equation*}
    I_k := I \triangle I(p_k).
\end{equation*}
In particular, $I_0 = I$ and $I_N = I \triangle I(w)$.

The procedure reads the letters from left to right.
At each step, it updates the state and outputs one factor according to the following table, where $J$ denotes the state immediately before reading the letter:
\[
\renewcommand{\arraystretch}{1.3}
\begin{array}{c|c|c|c}
\text{Input letter}
&
\text{State before reading the letter}
&
\text{State after reading the letter}
&
\text{Output factor}
\\ \hline
a_j & J & J\triangle j & a_{J,j}\\
a_j^{-1} & J & J\triangle j & a_{J\triangle j,j}^{-1}\\
\sigma_i & J & J & \sigma_{J,i} \\
\sigma_i^{-1} & J & J & \sigma_{J,i}^{-1}
\end{array}
\]

Explicitly, the factor assigned to the $k$-th letter is 

\begin{equation*}
  \rho_k :=
  \begin{cases}
    a_{I_{k-1}, j}, & \text{if $y_k = a_j$,} \\
    a_{I_k, j}^{-1},              & \text{if $y_k = a_j^{-1}$,} \\
    \sigma_{I_{k-1}, i},       & \text{if $y_k = \sigma_i$,} \\
    \sigma_{I_{k-1}, i}^{-1},  & \text{if $y_k = \sigma_i^{-1}$}.
  \end{cases}
\end{equation*}

We define 
\begin{equation}
    \tau_I(w) := \rho_1 \cdots \rho_N,
\end{equation}
where identity factors are deleted and the product is freely reduced in the Schreier generators.
The empty input word gives the empty output word.

For example, reading $a_j^2$ from the state $I$ gives the state sequence
\begin{equation*}
    I \xrightarrow{a_j} I \triangle j \xrightarrow{a_j} I,
\end{equation*}
and hence the output is $\tau_I(a_j^2) = a_{I, j} a_{I\triangle j, j}$.
For $r = a_j \sigma_i a_j^{-1} \sigma_i^{-1}$, the state sequence is 
\begin{equation*}
    I \xrightarrow{a_j} I \triangle j \xrightarrow{\sigma_i} I \triangle j \xrightarrow{a_j^{-1}} I \xrightarrow{\sigma_i^{-1}} I,
\end{equation*}
so $\tau_I(r) = a_{I, j} \sigma_{I \triangle j, i} a_{I, j}^{-1} \sigma_{I, i}^{-1}$.

\subsubsection{Relation with the standard rewriting map $\tau$}
Starting from the identity coset gives the standard procedure of Proposition \zcref{RS_method}. Therefore, $\tau= \tau_{\emptyset}$.

For $I \subset S$ and $w \in F$, consider the full word $a_I w a_{I \triangle I(w)}^{-1}$, and apply $\tau$, starting from state $\emptyset$.
The initial segment $a_I$ is an increasing transversal word.
Each of its prefixes is again a transversal word, so its output factors are all identities.
The middle segment $w$ starts from state $I$ and produces $\tau_I(w)$.
The final segment $a_{I \triangle I(w)}^{-1}$, read from state $I \triangle I(w)$, reverses the chosen transversal word and likewise contributes only identity factors.
Hence
\begin{equation}
    \tau_I(w) = \tau(a_I w a_{I \triangle I(w)}^{-1}).
\end{equation}
In particular, for $w \in \hat{H}$, $\tau_I(w) = \tau(a_I w a_I^{-1})$ as we expected.

\subsubsection{Concatenation and inversion formulas}
Suppose that $vw$ is the concatenation of two words $v, w \in F$ and read from state $I$.
After reading $v$, the state is $I \triangle I(v)$.
The word $w$ must therefore be rewritten from that state, giving 
\begin{equation}\label{concatenation_formula}
    \tau_{I}(vw) = \tau_I(v) \tau_{I \triangle I(v)} (w).
\end{equation}

For inversion, reading $w$ from $I$ ends at $I \triangle I(w)$.
Reading $w^{-1}$ from the state $I \triangle I(w)$ reverses the state sequence and outputs the inverse of the original factors in reverse order.
Hence $\tau_{I \triangle I(w)}(w^{-1}) = \tau_I (w) ^{-1}$.
Equivalently,
\begin{equation}\label{inversion_formula}
    \tau_I(w^{-1}) = \tau_{I \triangle I(w)} (w)^{-1}.
\end{equation}
Consequently, $\tau \colon F \ra F_1$ is not a homomorphism in general.

\subsection{The rewritten relations}
In this section, we give the rewritten relations.
Every ambient relator $r \in \cR$ has $I(r) = \emptyset$.
Hence, Proposition \zcref{RS_method} imposes precisely
\begin{equation}
    \tau(a_I r a_I^{-1}) = \tau_I(r) = 1 \ \ (I \subset S). 
\end{equation}
If a relation is written as $L = R$, its relator is $LR^{-1}$.
Since $I(L) = I(R)$, concatenation and inversion formulas \zcref{concatenation_formula}--\zcref{inversion_formula} show that its rewriting is equivalently $\tau_I(L) = \tau_I(R)$.
Therefore, for every $I \subset S$, the rewriting of \zcref{rel:A1}--\zcref{rel:M3} gives the following relations:
\begin{align}
 \sigma_{I,i}\sigma_{I,j}&=\sigma_{I,j}\sigma_{I,i} &&(1 \leq i < j \leq n-1, \ j-i > 1),\tag{RS-A1}\label{rel:RSA1}\\
 \sigma_{I, i}\sigma_{I, i+1}\sigma_{I,i}&=\sigma_{I, i+1}\sigma_{I,i}\sigma_{I, i+1} &&(1\le i\le n-2),\tag{RS-A2}\label{rel:RSA2}\\
 a_{I,k}\sigma_{I \triangle k, i}&=\sigma_{I, i} a_{I,k} &&(1\le k\le m,\ 2\le i\le n-1),\tag{RS-M1}\label{rel:RSM1}\\
 \sigma_{I,1} a_{I,k} \sigma_{I \triangle k, 1} a_{I \triangle k, k}&=a_{I,k}\sigma_{I \triangle k, 1} a_{I \triangle k, k} \sigma_{I, 1} &&(1\le k\le m),\tag{RS-M2}\label{rel:RSM2}\\
 a_{I, k}(\sigma_{I \triangle k, 1}a_{I \triangle k, \ell}\sigma_{I \triangle k \triangle \ell, 1}^{-1}) &=(\sigma_{I,1}a_{I, \ell}\sigma_{I \triangle \ell,1}^{-1})a_{I \triangle \ell,k} &&(\ell<k).\tag{RS-M3}\label{rel:RSM3}
\end{align}
The rewriting of the square relations \zcref{square_relations} gives
\begin{equation}\label{RS_square_relation}
    a_{I, k} a_{I \triangle k, k} = 1 \ \ \ (1 \leq k \leq m). \ \ \tag{RS-SQ}
\end{equation}
For the infinity-square \zcref{infinity_square}, let us consider the rewriting of $P_m$ and $\Omega_n$.
Define 
\begin{equation*}
    I^{(k)} := I \triangle \{1, \dots, k \}\ (0 \leq k \leq m), \ \ I^c := I^{(m)} = S \setminus I.
\end{equation*}
Let $P_I$ denote $\tau_I(P_m)$.
Then, by the concatenation formula \zcref{concatenation_formula}, we get
\begin{equation*}
    P_I := \tau_I(P_m) = \tau_I(a_1 \cdots a_m) = a_{I^{(0)}, 1} a_{I^{(1)}, 2} \cdots a_{I^{(m-1)}, m}.
\end{equation*}
Similarly, we put $A_{1s}(I) := \tau_I(A_{1s})$ and $\Omega_I :=\tau_I(\Omega_n)$.
Then 
\begin{equation}
    A_{1s}(I) := \tau_I(A_{1s}) = \tau_I(\sigma_{s-1}\cdots \sigma_2 \sigma_1^2 \sigma_2^{-1} \cdots \sigma_{s-1}^{-1}) = \sigma_{I, s-1}\cdots \sigma_{I, 2} \sigma_{I,1}^2 \sigma_{I,2}^{-1} \cdots \sigma_{I,s-1}^{-1}
\end{equation}
and 
\begin{equation*}
    \Omega_I := \tau_I(\Omega_n) = \prod_{s=2}^n A_{1s}(I).
\end{equation*}
Since the state sequence of $(P_m \Omega_n)^2$ is
\begin{equation*}
    I \xrightarrow{P_m} I^c \xrightarrow{\Omega_n} I^c \xrightarrow{P_m} I \xrightarrow{\Omega_n} I,
\end{equation*}
the concatenation formula \zcref{concatenation_formula} tells us that the infinity-square \zcref{infinity_square} becomes
\begin{equation}\label{RS_infinity_square}
    P_I \Omega_{I^c} P_{I^c} \Omega_I = 1\tag{RS-SQ0}.
\end{equation}

In summary, we obtain the following theorem.

\begin{Thm}\label{Explicit_description_of_fundamental_group}
    Assume $n \geq 2$.
    The fundamental group $\pi_1(X \setminus W)$ has a presentation with the non-identity free Schreier generators $a_{I, j}$ and $\sigma_{I, i}$
    \begin{equation*}
        a_{I,1}, \cdots, a_{I,n+2}, \sigma_{I, 1}, \cdots, \sigma_{I, n-1},
    \end{equation*}
  for every $I \subset S$ and all relations \zcref{rel:RSA1}--\zcref{RS_infinity_square}. \qed
\end{Thm}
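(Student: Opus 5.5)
The plan is to assemble three earlier ingredients. First, Corollary \zcref{ker_description_of_pi_1} identifies $\pi_1(X\setminus W)$ with $H=\ker(\tilde\mu)$ inside $\pi_1^{\orb}(\cU_n)\simeq SB_{n+3,n}/\langle\langle a_0^2,\dots,a_{n+2}^2\rangle\rangle_{\cB}$. Second, \zcref{presentation} gives an explicit presentation of this orbifold group. Third, the Reidemeister--Schreier method (Proposition \zcref{RS_method}) turns a presentation of the ambient group into one of $H$, once a Schreier transversal has been fixed. So the theorem follows once the transversal, the Schreier generators and the rewritten relators have been identified with the lists given.

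The first step is the ambient presentation. Since $n\ge 2$, Lambropoulou's presentation of $B_{m,n}$ applies. I will use the identification $SB_{n+3,n}\simeq B_{n+2,n}$, obtained by sending $\lambda_0\mapsto\infty$. The meridian $a_0$ is then eliminated through the spherical relation \zcref{spherical_relation}, $a_0=(P_m\Omega_n)^{-1}$. The quotient by the normal closure of the squares is therefore presented by generators $\cA$ and relators \zcref{rel:A1}--\zcref{rel:M3}, \zcref{square_relations}, \zcref{infinity_square}. Here $a_0^2=1$ becomes $(P_m\Omega_n)^{-2}=1$, which is equivalent to \zcref{infinity_square}.

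The one point I expect to need genuine care is the spherical relation itself. Passing from the affine mixed braid group $B_{m,n}$ to the configuration space on $\bP^1$ minus $m+1$ points only adds the puncture at $\infty$. It therefore imposes exactly the relation that the product of all meridians around $z_1$, taken in the chosen cyclic order, is trivial. I need to check two things: that the based loop around $\infty$ is indeed $(P_m\Omega_n)^{-1}$ with the chosen access paths, and that no further relations are needed. For the first, I would verify that $a_0P_m\Omega_n$ is the boundary loop of a disk in $\bP^1$ minus the other points. For the second, I would appeal to the fibration of $\UConf_n$ over the position of $z_1$ in the usual way, or simply take this as the standard description, since $\bC\setminus\{\lambda_j\}\cong\bP^1\setminus\{\lambda_0,\dots,\lambda_m\}$ and the configuration spaces literally coincide.

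The second step is the transversal and the generators. By Lemma \zcref{state_I_lemma}, the cosets of $\hat H=\ker\hat\mu$ are indexed by subsets $I\subset S$, and $T=\{a_I\}$ is prefix-closed, so it is a Schreier transversal. The Schreier generators are then $\gamma(a_I,a_j)=a_{I,j}$ and $\gamma(a_I,\sigma_i)=\sigma_{I,i}$, as in \zcref{schreier_basis}; those equal to $1$ are discarded. Proposition \zcref{RS_method} then gives the relators $\tau(a_Ira_I^{-1})$, one for each $I$ and each $r\in\cR$. Since $I(r)=\emptyset$ for every relator, the identity $\tau_I(w)=\tau(a_Iwa_{I\triangle I(w)}^{-1})$ shows these are exactly $\tau_I(r)$.

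The last step is a letter-by-letter computation of $\tau_I(r)$ using the table together with the concatenation and inversion formulas \zcref{concatenation_formula}, \zcref{inversion_formula}. Writing each relation as $L=R$ and rewriting both sides from state $I$ yields \zcref{rel:RSA1}--\zcref{RS_square_relation}; for \zcref{rel:RSM3} this means tracking the state through $a_k$ and $\sigma_1a_\ell\sigma_1^{-1}$. For the infinity-square, the state sequence $I\to I^c\to I^c\to I\to I$ gives \zcref{RS_infinity_square}. Each of these is routine bookkeeping, so, as noted above, the only real obstacle is confirming the spherical relation.
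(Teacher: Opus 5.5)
Your proposal is correct and follows the paper's route. The paper gives no separate proof: it assembles the kernel description of $\pi_1(X\setminus W)$ inside $\pi_1^{\orb}(\cU_n)$, Lambropoulou's presentation with the square relations added and $a_0$ eliminated via $a_0=(P_m\Omega_n)^{-1}$, and Reidemeister--Schreier with the transversal $\{a_I\}$ and the state-based rewriting $\tau_I$. The paper also simply asserts the spherical relation you single out; since $\lambda_0=\infty$ the two configuration spaces coincide, so all that matters is that $(P_m\Omega_n)^{-1}$ represents a conjugate of the meridian at $\infty$, because only the normal closure of $a_0^2$ is imposed.
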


Let us count the number of generators and relations of $\pi_1(X \setminus W)$.
\begin{Prop}\label{numbers_of_generators_relators}
    Assume $n \geq 2$.
    Before any further Tietze transformations, the number of generators of $\pi_1(X \setminus W)$ is $2^{n+3}n +1$,
    and the number of formal relators is $2^{n+2}(2n^2 + 2n + 3)$.
\end{Prop}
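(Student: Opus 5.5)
The plan is to count the two parts of the presentation in Theorem \zcref{Explicit_description_of_fundamental_group} separately. The generators are the non-identity Schreier generators $\gamma(t,x)$ of Proposition \zcref{RS_method}. The formal relators are the rewritten conjugates $\tau(trt^{-1})$, indexed by pairs $(t,r)\in T\times\cR$.

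\textbf{Generators.} The ambient free group $F$ has basis $\cA$ of size $m+(n-1)=2n+1$, where $m=n+2$. The transversal $T=\{a_I\}$ has $2^m$ elements, so there are $2^m(2n+1)$ formal symbols $\gamma(a_I,x)$. I will determine exactly which of these are trivial in $F$.
\begin{itemize}
\item For $\sigma_{I,i}=a_I\sigma_i a_I^{-1}$, the reduced word contains the letter $\sigma_i$, so it is never the identity.
\item For $a_{I,j}=a_Ia_j a_{I\triangle j}^{-1}$, recall that $a_I$ and $a_{I\triangle j}$ are increasing words. This element is trivial exactly when $a_Ia_j$ is already the increasing word $a_{I\cup\{j\}}$, that is, when $j\notin I$ and $j>\max I$.
\end{itemize}
In every other case the word $a_Ia_ja_{I\triangle j}^{-1}$ does not freely reduce to $1$. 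I will check this by looking at where cancellation stops: it happens either against the letter $a_j$ when $j\in I$, or at the first position where the increasing words differ.

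The pairs $(I,j)$ with $j>\max I$ are in bijection with nonempty subsets $J=I\cup\{j\}$ of $S$, since $j=\max J$ is determined by $J$. Hence there are exactly $2^m-1$ trivial Schreier symbols. This agrees with the Schreier index formula: $\hat H$ has index $2^m$ in $F$, so its rank is $2^m(2n+1-1)+1$. The number of generators is therefore
\[
2^m(2n+1)-(2^m-1)=2^{n+2}\cdot 2n+1=2^{n+3}n+1.
\]

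\textbf{Relators.} By Proposition \zcref{RS_method}, one formal relator $\tau(a_Ira_I^{-1})=\tau_I(r)$ is written down for each $I\subset S$ and each $r\in\cR$. We do not discard relators that happen to become trivial or to coincide; that is the sense of ``formal''. By Lemma \zcref{number_of_relators}, $|\cR|=2n^2+2n+3$. The count is therefore
\[
2^m\cdot|\cR| = 2^{n+2}(2n^2+2n+3).
\]
These are exactly the families \zcref{rel:RSA1}--\zcref{RS_infinity_square}, indexed by $I$.

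The only step needing care is showing that no other $a_{I,j}$ collapses to the identity. I expect the free-reduction argument above to handle it, and the Schreier rank formula gives an independent check of the final generator count.
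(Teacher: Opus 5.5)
Your proposal is correct and follows essentially the same route as the paper. You count the $2^m(2n+1)$ Schreier symbols and discard the $2^m-1$ trivial ones $a_{I,j}$ with $j\notin I$ and $j>\max I$; your bijection with nonempty subsets $J$ via $j=\max J$ replaces the paper's sum $\sum_j 2^{j-1}$. You then multiply $|\cR|$ by $|T|=2^m$ to get the formal relators, exactly as the paper does, and your check against the Schreier index formula for the rank $2^m\cdot 2n+1$ is a nice independent confirmation that the paper does not include.
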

\begin{proof}
    First, let us count the non-identity Schreier generators.
    The Schreier generators are $a_{I,j} = a_I a_j a_{I\triangle j}^{-1}$ and $\sigma_{I, i} = a_I \sigma_i a_I^{-1}$, where $I \subset S$, $1 \leq j \leq m$, and $1 \leq i \leq n-1$.
    Since $|T| = 2^m$, there are initially {\it{$m2^m$ $a$-type Schreier generators}} $a_{I, j}$ and $(n-1)2^m$ {\it{$\sigma$-type Schreier generators $\sigma_{I, i}$}}.
    We must omit precisely those that equal $1$ in $F_1$.
    
    For an $a$-type generator, $a_{I, j} = 1$ in $F_1$ if and only if $a_I a_j = a_{I \triangle j}$ in $F$.
    Both sides of the equation $a_I a_j = a_{I \triangle j}$ are equal in $F$ if and only if their letters agree in order.
    If $j \in I$, then the word $a_Ia_j$ has length $|I|+ 1$, whereas the word $a_{I \triangle j}$ has length $|I| -1$, and therefore cannot be equal.
    If $j \notin I$, then the word $a_{I\triangle j}$ lists the elements of $I \cup \{j \}$ in increasing order.
    It agrees with $a_I a_j$ precisely when $j$ is larger than every element in $I$.
    Hence, $a_{I, j} = 1$ in $F_1$ if and only if $j \notin I$ and $j > \max I$, where $\max \emptyset = 0$.
    For a fixed $j$, this is equivalent to the condition $I \subset \{1, \dots, j-1 \}$, so there are exactly $2^{j-1}$ such words.
    Consequently, the total number of identity $a$-type generators is $\sum_{j = 1}^m 2^{j-1} = 2^m -1$.

    On the other hand, no $\sigma$-type generator is identity, i.e., $a_I \sigma_i \neq a_I$ in $F$ for every $I$ and $i$, since $a_I$ contains only $a$-type letters and $\sigma_i$ cannot be canceled out.
    Therefore, the number of the non-identity Schreier generators is
    \begin{equation*}
        (m2^m -(2^m-1)) + (n-1)2^m = 2^{n+3}n +1.
    \end{equation*}

    Next, we compute the number of formal relators.
    Proposition \zcref{RS_method} supplies one rewritten relator
    \begin{equation*}
        \tau(a_I r a_I^{-1}) = \tau_I(r) = 1
    \end{equation*}
    for each ambient relator $r \in \cR$ and each initial state $I \subset S$.
    Since there are $|\cR| = 2n^2 + 2n + 3$ choices of $r$ (Lemma \zcref{number_of_relators}) and $2^m$ choices of $I$, the number of formal relators is 
    \begin{equation*}
        2^m|\cR| = 2^{n+2}(2n^2 + 2n + 3).
    \end{equation*}
\end{proof}
In \zcref{n=2_example_section}, we apply Tietze transformations to get a minimal presentation of $\pi_1(X \setminus W)$ in the case of $n = 2$.

\section{Examples}\label{example_section}
In this section, we apply our results so far to the case $n = 1, 2, 3$.
Especially, in the case $n =2$, we give a purely algebraic minimal presentation of the fundamental group $\pi_1(X \setminus W)$.
\subsection{$n = 1$: elliptic curve}
\begin{Prop}
    For $n = 1$, $X$ is an elliptic curve in $\bP^3_{\bC}$, $W = \emptyset$, and
    \begin{equation*}
        \pi_1(X \setminus W) = \pi_1(X) \simeq H_1(X \setminus W, \bZ) \simeq \bZ^2.
    \end{equation*}
\end{Prop}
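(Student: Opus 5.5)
The plan is to assemble this from results already established, and then add an independent check through the group-theoretic description of Corollary \zcref{ker_description_of_pi_1}.

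\emph{Geometric part.} Proposition \zcref{X_and_W} (1) gives that $X$ is a smooth complete intersection of two quadrics in $\bP^3_{\bC}$. By adjunction, $K_X = \cO_X(-4+2+2) = \cO_X$, and $X$ is connected by \cite[Proposition 2.1]{Reid1972}, so $X$ is a smooth projective curve of genus $1$. Since $\Sym^1(\bP^1_{\bC}) = \bP^1_{\bC}$ has empty discriminant, $W = \emptyset$. Hence $\pi_1(X\setminus W) = \pi_1(X)$, and $X$ is topologically a real $2$-torus, so $\pi_1(X) \simeq \bZ^2$. This group is abelian, so it coincides with its abelianization $H_1(X,\bZ)$, which is the content of Theorem \zcref{first_homology_group} (1).

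\emph{Consistency check via Corollary \zcref{ker_description_of_pi_1}.} As noted in \zcref{presentation}, for $n=1$ the group $SB_{4,1}$ is $\pi_1(\bP^1_{\bC}\setminus\{\lambda_0,\dots,\lambda_3\})$. It is generated by $a_0,\dots,a_3$ subject to the single relation $a_0a_1a_2a_3=1$, with $\Omega_1$ trivial. Adding the relations $a_j^2=1$ produces the orbifold group $\Gamma$ of the $(2,2,2,2)$ sphere. This group is the Euclidean crystallographic group $\bZ^2\rtimes \bZ/2$. The map $\tilde\mu$ sends each $a_j$ to $\eta_j \in G\simeq(\bZ/2)^3$, and these images satisfy $\eta_0\eta_1\eta_2\eta_3=1$ in $G$. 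The kernel of $\tilde\mu$ has index $8$ and is torsion free: every torsion element of $\Gamma$ is conjugate to some $a_j$, and $\tilde\mu(a_j)=\eta_j\neq 1$. A torsion-free subgroup of finite index in a Euclidean crystallographic group of this kind is a lattice, so $\ker\tilde\mu\simeq\bZ^2$. This agrees with the geometric computation.

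\emph{Main obstacle.} No step is genuinely hard. The only point needing care is justifying that $X$ has genus one, which I would settle by adjunction as above. Alternatively, Riemann--Hurwitz applied to $f$ works: $f$ is a degree-$8$ cover of $\bP^1_{\bC}$ branched over $4$ points, with each branch fibre consisting of $4$ points of ramification index $2$. This gives
\begin{equation*}
2g-2 = 8\cdot(-2) + 4\cdot 4 = 0,
\end{equation*}
so $g=1$.
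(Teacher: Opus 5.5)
Your proposal is correct and follows essentially the paper's route: the paper just invokes Proposition~\zcref{X_and_W}~(1), where $X$ being an elliptic curve is taken as classical and $W=\emptyset$ because $\Sym^1(\bP^1_{\bC})=\bP^1_{\bC}$ has empty discriminant, so your adjunction (or Riemann--Hurwitz) computation only supplies the genus-one fact the paper leaves implicit; note that smoothness of $X$ as a complete intersection comes from the setup and \cite{Reid1972}, not from that proposition. Your $(2,2,2,2)$-orbifold check via Corollary~\zcref{ker_description_of_pi_1} is also correct, since a torsion-free subgroup of $\bZ^2\rtimes\bZ/2$ must lie in the translation lattice, but it is not needed for the proof.
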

\begin{proof}
    This follows from Proposition \zcref{X_and_W} (1).
\end{proof}

\subsection{$n=2$: degree $4$ del Pezzo surface $dP_5$}\label{n=2_example_section}

\subsubsection{An algebraic presentation}
\begin{Prop}\label{n=2_general_statement}
    For $n = 2$, $X$ is a degree $4$ del Pezzo surface $dP_5$ in $\bP^4_{\bC}$, and $W$ is the union of $16$ $(-1)$-curves.
     The Reidemeister--Schreier presentation of $\pi_1(X \setminus W)$ has $65$ generators and $240$ formal relators.
    Its abelianization $H_1(X \setminus W, \bZ)$ is $\bZ^{10}$.
\end{Prop}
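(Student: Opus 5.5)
This proposition collects earlier results specialized to $n=2$, so the plan is to assemble them rather than prove anything new.

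\textbf{Geometry and abelianization.} The first two claims, that $X$ is a degree $4$ del Pezzo surface $dP_5\subset\bP^4_{\bC}$ and that $W$ is the union of the sixteen $(-1)$-curves, are exactly Proposition \zcref{X_and_W} (2). Recall that this rested on Lemma \zcref{Z_irreducibility_lemma} (2), which says $Z_2$ consists of $16$ reduced points, and on Proposition \zcref{W_irreducible_prop}, which says the image of $\nu_2$ is $W$. The abelianization statement $H_1(X\setminus W,\bZ)\simeq\bZ^{10}$ is Theorem \zcref{first_homology_group} (2). That theorem comes from blowing down to $\bP^2_{\bC}$ minus ten lines and a conic and applying Dimca's formula with $r=11$ components of degrees $1,\dots,1,2$. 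I would cite both results directly.

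\textbf{Counting generators and relators.} Here I use Proposition \zcref{numbers_of_generators_relators} with $n=2$, so $m=4$ and $|T|=2^4=16$. The generator count is $2^{n+3}n+1=2^5\cdot 2+1=65$. As a sanity check:
\begin{itemize}
\item there are $m2^m=64$ $a$-type Schreier generators, of which $2^m-1=15$ are trivial;
\item there are $(n-1)2^m=16$ $\sigma$-type generators, none of them trivial;
\item this gives $64-15+16=65$.
\end{itemize}
The relator count is $2^{n+2}(2n^2+2n+3)=16\cdot 15=240$. I would verify $|\cR|=15$ directly from Lemma \zcref{number_of_relators}, noting the degenerate index ranges at $n=2$:
\begin{itemize}
\item \zcref{rel:A1}, \zcref{rel:A2} and \zcref{rel:M1} are empty, since there is only the single braid generator $\sigma_1$;
\item \zcref{rel:M2} contributes $m=4$ relators;
\item \zcref{rel:M3} contributes $\binom{4}{2}=6$ relators;
\item \zcref{square_relations} contributes $4$ relators;
\item \zcref{infinity_square} contributes $1$ relator.
\end{itemize}
These add to $15$, in agreement with $2n^2+2n+3=15$. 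Since Proposition \zcref{numbers_of_generators_relators} and Theorem \zcref{Explicit_description_of_fundamental_group} were stated for $n\ge2$, they apply without change.

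\textbf{Main obstacle.} The only delicate point is making sure the general counting formulas remain valid in this degenerate case. Specifically, I need that the formula of Lemma \zcref{number_of_relators} gives zero, not a negative number, for the empty ranges. At $n=2$ the terms $(n-2)(n-3)/2$, $(n-2)$ and $m(n-2)$ all vanish, so this holds. I also need that no $\sigma$-type generator becomes trivial, which is clear. One could optionally cross-check the abelianization by abelianizing the Reidemeister--Schreier presentation, but that is not needed.
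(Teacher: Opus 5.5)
Your proposal is correct and matches the paper's proof: cite Proposition \zcref{X_and_W} (2) for the geometry of $X$ and $W$, Proposition \zcref{numbers_of_generators_relators} at $n=2$ for the counts $65$ and $240$, and Theorem \zcref{first_homology_group} (2) for $H_1(X\setminus W,\bZ)\simeq\bZ^{10}$. Your check of the degenerate index ranges ($|\cR|=4+6+4+1=15$ and $49+16=65$ generators) is accurate and confirms what the paper leaves implicit.
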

\begin{proof}
    The first statement follows from Proposition \zcref{X_and_W} (2).
    Proposition \zcref{numbers_of_generators_relators} tells us there are $2^5\cdot2 +1 = 65$ generators and $2^4\cdot15=240$ relators.
    The abelianization follows from Theorem \zcref{first_homology_group} (2).
\end{proof}

For $n = 2$, the ambient generators are $a_1, \dots, a_4$ and $\sigma_1$.
Write $m_I := \sigma_{I, 1} = a_I \sigma_1 a_I^{-1}$.
The elimination in \zcref{elimination} retains the meridians
\begin{equation}\label{eq:ten}
\begin{split}
(\widetilde g_1,\widetilde g_2,\widetilde g_3,\widetilde g_4,\widetilde g_5)
 &=(m_\varnothing,m_1,m_2,m_{12},m_3),\\
(\widetilde g_6,\widetilde g_7,\widetilde g_8,\widetilde g_9,\widetilde g_{10})
 &=(m_{13},m_{23},m_4,m_{14},m_{24}).
\end{split}
\end{equation}
We make the free basis change
\begin{equation}\label{eq:newten}
g_i=\widetilde g_i\ (i\ne5,8),\qquad
g_5=\widetilde g_1^{-1}\widetilde g_2\widetilde g_3\widetilde g_5,
\qquad g_8=g_5\widetilde g_8.
\end{equation}
Its inverse is
$\widetilde g_5=g_3^{-1}g_2^{-1}g_1g_5$ and
$\widetilde g_8=g_5^{-1}g_8$.
For simplicity, we put
\begin{equation}\label{eq:word_abbreviations}
U_1 := g_1g_4,\qquad U_2 := g_4g_6g_7,\qquad
U_3 := g_4g_6g_9,\qquad U_4 := g_4g_7g_{10}g_1.
\end{equation}

\begin{Prop}\label{algebraic_presentation}
    Assume $n = 2$.
    The group $\pi_1(X \setminus W)$ has the presentation
    \begin{equation}
        \pi_1(X \setminus W) \simeq \langle g_1, \dots, g_{10} \  | \ q_1, \dots, q_{25} \rangle,
    \end{equation}
    where the relators $q_i$ are listed in \zcref{relators_list}.
    The first twenty-one relators are $g_sxg_s^{-1}\Phi_s(x)^{-1}$ from \zcref{tab:actions}, ordered column by column, and the final four are
\begin{equation}\label{eq:four_relators}
\begin{split}
q_{22}:=[g_9,g_7]&=1,\qquad q_{23}:=[g_1,g_8]=1,\\
q_{24}:=[g_{10},g_9^{-1}g_6g_9]&=1,\qquad
q_{25}:=[g_4,g_8^{-1}U_2g_9g_{10}g_1]=1,
\end{split}
\end{equation}
Here $[x,y]=xyx^{-1}y^{-1}$.
\end{Prop}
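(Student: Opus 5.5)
The plan is to start from the Reidemeister--Schreier presentation of Theorem \zcref{Explicit_description_of_fundamental_group} with $n=2$, $m=4$. By Proposition \zcref{n=2_general_statement} it has $65$ generators and $240$ formal relators. We reduce it to the stated one by an explicit chain of Tietze transformations. For $n=2$ there is only one half-twist, so relators \zcref{rel:RSA1}, \zcref{rel:RSA2} and \zcref{rel:RSM1} are empty. The generators are the $a_{I,j}$ (minus the $15$ trivial ones) and the sixteen $m_I=\sigma_{I,1}$. The relators are \zcref{rel:RSM2}, \zcref{rel:RSM3}, \zcref{RS_square_relation} and \zcref{RS_infinity_square}, each taken for all $16$ states $I$.

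\emph{Step 1: eliminate the $a$-type generators.} The relators \zcref{RS_square_relation} say $a_{I\triangle k,k}=a_{I,k}^{-1}$. Using them, we first halve the $a$-type generators to one representative per edge $\{I,I\triangle k\}$ of the $4$-cube. Many of these representatives are already trivial, namely those with $I\subset\{1,\dots,k-1\}$. The remaining nontrivial edge generators are then solved for using \zcref{rel:RSM3} and \zcref{RS_infinity_square}. Since $\Omega_I=m_I^2$ when $n=2$, relator \zcref{RS_infinity_square} reads $P_I m_{I^c}^2 P_{I^c} m_I^2=1$. I expect that a spanning-tree argument on the cube will express every $a_{I,k}$ as a word in the $m_J$, i.e.\ in conjugates and squares of meridians. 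After this step the generators are the sixteen $m_I$, which correspond to meridians of the sixteen lines, consistent with Proposition \zcref{X_and_W}(2).

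\emph{Step 2: reduce sixteen meridians to ten.} We substitute the expressions from Step 1 into the surviving instances of \zcref{rel:RSM2}, \zcref{rel:RSM3} and \zcref{RS_infinity_square}. Six of these relations should become of the form $m_J=(\text{word in the others})$, and Tietze elimination removes $m_J$ for $J\in\{123,124,134,234,34,1234\}$. This leaves exactly the ten meridians of \zcref{eq:ten}. We then perform the free basis change \zcref{eq:newten}. Here $g_5$ is chosen so that the product $\widetilde g_1^{-1}\widetilde g_2\widetilde g_3\widetilde g_5$ appearing repeatedly becomes a single letter, and likewise for $g_8$.

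\emph{Step 3: organize and prune the remaining relators.} The remaining relators, rewritten in the $g_i$, should mostly take the form $g_s x g_s^{-1}=\Phi_s(x)$, i.e.\ conjugation actions. We tabulate these in \zcref{tab:actions}, using the abbreviations $U_1,\dots,U_4$ of \zcref{eq:word_abbreviations}. We then show that the $240$ formal relators are consequences of these twenty-one action relators together with the four commutators \zcref{eq:four_relators}, and we delete the redundant ones. The efficient way is to group the relators by the ambient relator $r$ and by the $G$-orbit structure of the states $I$, so that each family is checked once up to the symmetry $I\mapsto I^c$. Conversely, each $q_i$ should be exhibited as an actual consequence of the rewritten relators.

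The main obstacle is Step 3, specifically proving that all of the roughly $200$ leftover relators are redundant and that no extra relation is hidden. This is pure free-group bookkeeping, and I would carry it out by machine-assisted Tietze reduction (free reduction plus substitution). I would keep a hand-checkable record of which $q_i$ each original relator reduces to. As a sanity check, I would compute the abelianization of $\langle g_1,\dots,g_{10}\mid q_1,\dots,q_{25}\rangle$. Every $q_i$ should abelianize to $0$, giving $\bZ^{10}$ in agreement with Theorem \zcref{first_homology_group}(2).
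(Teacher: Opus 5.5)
Your plan follows the paper's route. The paper likewise eliminates all $49$ transition generators in terms of the sixteen meridians $m_I$, then removes $m_{124},m_{134},m_{234},m_{1234},m_{123},m_{34}$, applies the basis change \eqref{eq:newten}, and reaches the twenty-one action relators plus four commutators by machine-verified Tietze moves (Appendix~\ref{Tietze_transformation_app_A}). The one step where the paper is more explicit is your Step~1: instead of a face-by-face spanning-tree solve, it uses the closed formula $a_{I,j}=m_I^{-1}m_{J_\epsilon}m_{J_{1-\epsilon}}^{-1}m_{I\triangle\{j\}}$, derived from \eqref{rel:M3} and $a_j^2=1$ alone, so \eqref{RS_infinity_square} is not needed there.
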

\begin{table}[htbp]
\centering\renewcommand{\arraystretch}{1.1}
\caption{The twenty-one equations $g_sxg_s^{-1}=\Phi_s(x)$, for $s=2,3,5$.}
\label{tab:actions}
\begin{tabular}{@{}cccc@{}}
\toprule
$x$ & $\Phi_2(x)$ & $\Phi_3(x)$ & $\Phi_5(x)$\\
\midrule
$g_1$ & $g_1$ & $g_1$ & $g_1$\\
$g_4$ & $g_4$ & $g_4$ & $U_2g_4U_2^{-1}$\\
$g_6$ & $g_6$ & $(g_7U_1)g_6(g_7U_1)^{-1}$ & $U_2g_6U_2^{-1}$\\
$g_7$ & $(U_1g_6)g_7(U_1g_6)^{-1}$ & $g_7$ & $U_2g_7U_2^{-1}$\\
$g_8$ & $U_3g_8U_3^{-1}$ & $U_4g_8U_4^{-1}$ & $U_2g_8U_2^{-1}$\\
$g_9$ & $g_9$ & $(g_{10}U_1)g_9(g_{10}U_1)^{-1}$ & $g_8g_9g_8^{-1}$\\
$g_{10}$ & $(U_1g_9)g_{10}(U_1g_9)^{-1}$ & $g_{10}$ & $g_8g_{10}g_8^{-1}$\\
\bottomrule
\end{tabular}
\end{table}

\begin{proof}
    As explained in Proposition \zcref{n=2_general_statement}, the raw presentation of $\pi_1(X \setminus W)$ has $65$ generators and $240$ formal relators.
    The Tietze transformations detailed and verified in \zcref{Tietze_transformation_app_A} convert it into $\langle g_1, \dots, g_{10} \  | \ q_1, \dots, q_{25} \rangle$, with generators and relators specified above.
\end{proof}

Explicitly, the indexing of the first twenty-one relators is
\begin{equation}\label{eq:ordered_relators}
q_{7(k-1)+\ell}=g_{s_k}g_{j_\ell}g_{s_k}^{-1}
\Phi_{s_k}(g_{j_\ell})^{-1}\quad(1\le k\le3,\ 1\le\ell\le7),
\end{equation}
where $(s_1,s_2,s_3)=(2,3,5)$ and
$(j_1,\ldots,j_7)=(1,4,6,7,8,9,10)$.
\zcref{relators_list} lists the expanded words $q_1,\ldots,q_{25}$.

\subsubsection{Minimality of the presentation}
Here, we show the presentation given by Proposition \zcref{algebraic_presentation} is minimal.
Put $F_1 := F(g_1, \dots, g_{10})$.
Then, $F_1^{ab} := F_1/[F_1, F_1] \simeq \bZ^{10}$.
Let $\bar{g}_i \in F_1^{ab}$ be the image of $g_i$ under the abelianization map $\ab \colon F_1 \ra F_1^{ab}$.
For a word $w \in F_1$, let $e_i(w)$ be the exponent sum of $g_i$ in $w$.
Cancellation does not change this sum, and 
\begin{equation*}
    \ab(w) = \sum_{i = 1}^{10} e_i(w)\bar{g}_i.
\end{equation*}
If $w$ has exponent sum zero in every generator $g_i$, i.e., $e_i(w) = 0$ for all $i$, then $\ab(w) = 0$.
This means $w \in [F_1, F_1]$.
The weight-two case of Hall's basis theorem (\cite[Theorem 4.1]{Hall50}) gives an isomorphism
\begin{equation*}
    \theta \colon [F_1, F_1]/[F_1, [F_1, F_1]] \xrightarrow{\sim} \bigwedge^2(F_1/[F_1, F_1]), \ \ \overline{[g_i, g_j]} \mapsto \bar{g}_i \wedge \bar{g}_j.
\end{equation*}
Through $\theta$, we identify $[F_1, F_1]/[F_1, [F_1, F_1]]$ with $\bigwedge^2(F_1/[F_1, F_1]) \simeq \bZ^{45}$, and then the classes $E_{ij} := \bar{g}_i \wedge \bar{g}_j$ with $1\leq i < j \leq 10$ form a free basis of it.

\begin{Lem}\label{linearly_independence_lemma}
    Each of $q_1, \dots, q_{25}$ has exponent sum zero in every generator $g_j$, and hence belongs to $[F_1, F_1]$.
    Their images in $[F_1, F_1]/[F_1, [F_1, F_1]]$ are linearly independent and span a direct summand.
\end{Lem}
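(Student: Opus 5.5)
The plan is to compute the image of each relator in $[F_1,F_1]/[F_1,[F_1,F_1]]\simeq\bigwedge^2\bZ^{10}$ and then show that the resulting $25\times 45$ integer matrix has all elementary divisors equal to $1$.

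\textbf{Exponent sums.} Every relator $q_1,\dots,q_{21}$ has the form $g_s x g_s^{-1}\Phi_s(x)^{-1}$. By \zcref{tab:actions}, each $\Phi_s(x)$ is a conjugate $VxV^{-1}$ of $x$. So $q = g_sxg_s^{-1}Vx^{-1}V^{-1}$, and its exponent sum in every $g_i$ is zero. The relators $q_{22},\dots,q_{25}$ are commutators, so they have zero exponent sums automatically. By the remark preceding the lemma, all $25$ words therefore lie in $[F_1,F_1]$.

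\textbf{Images in the second quotient.} We use the standard identities in $[F_1,F_1]/[F_1,[F_1,F_1]]$: $[a,b]\mapsto \bar a\wedge\bar b$, the map is bilinear, and conjugation acts trivially. For a relator of the first type,
\begin{equation*}
g_sxg_s^{-1}(VxV^{-1})^{-1} = [g_s,x]\,x\,(VxV^{-1})^{-1} = [g_s,x]\,[x,V],
\end{equation*}
where the last equality holds modulo $[F_1,[F_1,F_1]]$. Its image is therefore $\bar g_s\wedge\bar x+\bar x\wedge\bar V$, with $\bar V=\ab(V)$. For example:
\begin{itemize}
\item $q_{11}$ ($s=3$, $x=g_6$, $V=g_7U_1$) maps to $E_{36}+\bar g_6\wedge(\bar g_7+\bar g_1+\bar g_4)$.
\item Rows with $\Phi_s(x)=x$ give just $E_{sx}$; in particular $q_1$ maps to $E_{12}$ up to sign.
\end{itemize}
For the commutators, $q_{24}$ maps to $E_{6,10}$ up to sign, using $\bar g_9^{-1}\bar g_6\bar g_9=\bar g_6$. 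Similarly $q_{25}$ maps to $\bar g_4\wedge(-\bar g_8+\bar g_4+\bar g_6+\bar g_7+\bar g_9+\bar g_{10}+\bar g_1)$. The relators $q_{22},q_{23}$ map to $\pm E_{79}$ and $\pm E_{18}$.

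\textbf{Direct summand.} Write the $25$ image vectors in the basis $E_{ij}$, giving a $25\times45$ integer matrix $M$. We must show that $M$ has rank $25$ and that its maximal minors have gcd $1$. Equivalently, the Smith normal form of $M$ is $(I_{25}\mid 0)$, or the cokernel $\bZ^{45}/\operatorname{im}M$ is free.

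We plan to do this by exhibiting, for each relator, a distinguished basis element $E_{s x}$ with $s\in\{2,3,5\}$ (and chosen $E_{ij}$ for $q_{22},\dots,q_{25}$) that carries coefficient $\pm1$ in that row. We then order the $25$ rows and $25$ distinguished columns so that the corresponding square submatrix is triangular with diagonal entries $\pm1$. The natural choice is the column $E_{s,x}$ for the row $(s,x)$, because $\bar g_s$ never appears in $\bar V$ for $s=2,3$. For $s=5$, however, $V$ may contain $g_8$, which interacts with the $s=5$ terms, so some care is needed.

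\textbf{Main obstacle.} The difficulty is exactly this bookkeeping: checking that the extra terms $\bar x\wedge\bar V$ do not land on another row's pivot column in a way that breaks triangularity. The risky cases are where $V$ involves $g_2,g_3,g_5$, or where a pivot column such as $E_{1,2}$ or $E_{1,8}$ is shared. If triangularity fails, the fallback is to compute the $25\times25$ minor explicitly, or directly compute the Smith normal form of $M$. This is a finite integer computation, verifiable by machine, in the same spirit as the appendix verification of the Tietze moves.
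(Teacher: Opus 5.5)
Your images of the relators are computed correctly; note that the row $s=3$, $x=g_6$ is $q_{10}$, not $q_{11}$. Your pivot strategy is exactly the paper's. However, the direct-summand claim, which is the real content of the lemma, is never established. You flag triangularity as an open obstacle, suspect interference in the $s=5$ rows, and defer to an unspecified machine computation.

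The missing observation is structural. Every conjugator $V$ in Table~\ref{tab:actions} is one of $U_1g_6$, $U_1g_9$, $g_7U_1$, $g_{10}U_1$, $U_2$, $U_3$, $U_4$, $g_8$, where $U_1=g_1g_4$, $U_2=g_4g_6g_7$, $U_3=g_4g_6g_9$, $U_4=g_4g_7g_{10}g_1$. Each of these is a word in the $g_j$ with $j\in J=\{1,4,6,7,8,9,10\}$, and $x$ also ranges over $J$. Hence $\bar q=\bar g_s\wedge\bar x+\eta$ with $\eta\in\bigwedge^2N_J$, where $N_J=\bigoplus_{j\in J}\bZ\bar g_j$. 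So no correction term ever touches a mixed column $E_{ab}$ with one index in $\{2,3,5\}$ and the other in $J$. Each of the $21$ mixed columns therefore occurs, with coefficient $\pm1$, in exactly one of $\bar q_1,\dots,\bar q_{21}$.

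Your concern about $g_8$ is unfounded. In the basis $g_1,\dots,g_{10}$ it is an independent free generator with index in $J$, and $\bar x\wedge\bar g_8=\pm E_{x8}$ is not a mixed column. Likewise $E_{12}$ occurs only in $\bar q_1$. The occurrence of $E_{18}$ in other rows, e.g.\ $\bar q_{12}=E_{38}-E_{18}-E_{48}-E_{78}+E_{8,10}$, is harmless because those rows have their own mixed pivots.

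To finish, note that $\bar q_{22},\dots,\bar q_{25}$ lie in $\bigwedge^2N_J$. On the columns $E_{79},E_{18},E_{6,10},E_{14}$ they form a diagonal matrix with entries $\pm1$, since $\bar q_{25}=-E_{14}+E_{46}+E_{47}-E_{48}+E_{49}+E_{4,10}$ vanishes on the first three. Take the $21$ mixed columns followed by these four. The resulting $25\times25$ minor is block upper triangular with $\pm1$ diagonal, so its determinant is $\pm1$. Equivalently, as in the paper, replacing the mixed basis vectors by $\bar q_1,\dots,\bar q_{21}$, and then $E_{79},E_{18},E_{6,10},E_{14}$ by $\bar q_{22},\dots,\bar q_{25}$, is a unimodular change of basis of $\bigwedge^2\bZ^{10}$. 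With this observation your argument is complete and coincides with the paper's. Without it, the proposal only reduces the lemma to a computation it does not carry out.
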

\begin{proof}
    We can directly check the first statement from the list in \zcref{relators_list}.

    For simplicity, put $M := [F_1, F_1]/[F_1, [F_1, F_1]]$.
    Write $\bar{q}_i$ for the image of $q_i$ in $M$.
    We will show that $\bar{q}_1, \dots, \bar{q}_{25}$ extend to a $\bZ$-basis of $M$.
    Set $S := \{2,3,5 \}$, $J := \{1,4,6,7,8,9,10 \}$, and $N_J := \oplus_{j \in J} \bZ \bar{g}_j$.
    Choose a basis of $M$ consisting of the twenty-one mixed vectors $E_{sj} = \bar{g}_s \wedge \bar{g}_j$ for $s \in S$ and $j \in J$, together with the vectors $E_{ab} = \bar{g}_a \wedge \bar{g}_b$ for $a < b$ whose indices both lie in $S$ or both lie in $J$.

    By \zcref{tab:actions}, each $\Phi_s(g_j)$ can be written as $\Phi_s(g_j) = c_{sj} g_j c_{sj}^{-1}$, where $c_{sj}$ involves only generators indexed by $J$.
    The corresponding relator $q_i$ is therefore
    \begin{equation*}
        q_i = g_s g_j g_s^{-1} \Phi_s(g_j)^{-1} = [g_s, g_j][c_{sj, g_j}]^{-1}
    \end{equation*}
    whose image in $M$ is
    \begin{equation*}
        \begin{split}
             \bar{q}_i &= \bar{g}_s \wedge \bar{g}_j - \ab(c_{sj}) \wedge \bar{g}_j\\
                       &= E_{sj} + \eta_{sj}, \ \ \ \eta_{sj} \in \wedge^2 N_J.
        \end{split}
    \end{equation*}
   Here, we set $\eta_{sj} := - \ab(c_{sj}) \wedge \bar{g}_j \in \wedge^2 N_J$ for simplicity.
   By the choice of $E_{sj}$, replacing the twenty-one mixed basis vectors $\{E_{sj} \}$ by the corresponding images $\bar{q}_1, \dots, \bar{q}_{21}$ still gives a $\bZ$-basis of $M$.

   Next, write $E_{ij} = \bar{g}_i\wedge\bar{g}_j$ for $i < j$.
   Then, the remaining four images are 
   \begin{equation*}
       \bar{q}_{22} = -E_{79}, \ \ \bar{q}_{23} = E_{18}, \ \ \bar{q}_{24} = - E_{6, 10}, 
   \end{equation*}
   and
   \begin{equation*}
       \bar{q}_{25} = -E_{14}+E_{46}+E_{47}-E_{48}+E_{49}+E_{4, 10}.
   \end{equation*}
   We may therefore replace the four basis vectors $E_{79}, E_{18}, E_{6, 10}, E_{14}$ by $\bar{q}_{22}, \bar{q}_{23}, \bar{q}_{24}, \bar{q}_{25}$ respectively.
   The resulting $\bZ$-basis of $M$ contains $\bar{q}_1, \dots, \bar{q}_{25}$.
   Consequently, their images are linearly independent and span a direct summand.
\end{proof}

For a finitely presented group $G$, let $d(G)_{min}$ be the least number of generators and let $s(G)_{min}$ be the least number of relators among all finite presentations.
We define its {\it{deficiency}} $\D(G)$ as the maximum over all presentations for $G$ of the number of generators minus the number of relators.
For simplicity, we put $\pi := \pi_1(X \setminus W)$.

\begin{Thm}\label{minimality_theorem}
    Assume $n = 2$.
    For $\pi := \pi_1(X \setminus W)$, we have
    \begin{equation*}
        H_1(\pi, \bZ) \simeq \bZ^{10}, \ \ H_2(\pi, \bZ) \simeq \bZ^{25}.
    \end{equation*}
    Moreover, every finite presentation of $\pi$ with $d$ generators and $s$ relators satisfies $d \geq 10$ and $s \geq d + 15$.
    Consequently,
    \begin{equation*}
        d(\pi)_{min} = 10, \ \ s(\pi)_{min} = 25, \ \  \D(\pi) = -15.
    \end{equation*}
    Thus, the presentation obtained by Proposition \zcref{algebraic_presentation} is minimal.
\end{Thm}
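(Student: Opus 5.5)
The plan is to compute $H_2(\pi,\bZ)$ with Hopf's formula from the presentation of Proposition \zcref{algebraic_presentation}. Then the lower bounds on $d$ and $s$ follow from standard homological inequalities. The abelianization $H_1(\pi,\bZ)\simeq\bZ^{10}$ is already Theorem \zcref{first_homology_group} (2). It can also be read off directly: by Lemma \zcref{linearly_independence_lemma} every $q_i$ lies in $[F_1,F_1]$, so $\pi^{ab}=F_1^{ab}=\bZ^{10}$.

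\textbf{Step 1: computing $H_2$.} Let $F_1=F(g_1,\dots,g_{10})$ and let $N$ be the normal closure of $q_1,\dots,q_{25}$. Since $N\subset[F_1,F_1]$, Hopf's formula gives
\[
H_2(\pi,\bZ)\simeq (N\cap[F_1,F_1])/[F_1,N]=N/[F_1,N].
\]
The group $N/[F_1,N]$ is abelian and generated by the images of $q_1,\dots,q_{25}$, because conjugates of $q_i$ become equal to $q_i$ modulo $[F_1,N]$. So it has rank at most $25$. For the lower bound, consider the natural map
\[
N/[F_1,N]\ \ra\ [F_1,F_1]/[F_1,[F_1,F_1]]=M.
\]
It is well defined because $[F_1,N]\subset[F_1,[F_1,F_1]]$. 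By Lemma \zcref{linearly_independence_lemma}, the images $\bar q_1,\dots,\bar q_{25}$ are $\bZ$-linearly independent in $M$. Hence the $25$ generators of $N/[F_1,N]$ are linearly independent, the map is injective, and $N/[F_1,N]\simeq\bZ^{25}$.

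\textbf{Step 2: the bounds.} For any finite presentation $\langle x_1,\dots,x_d\mid r_1,\dots,r_s\rangle$ of $\pi$, surjecting $\bZ^d\to H_1(\pi)=\bZ^{10}$ gives $d\ge 10$. For the relators, I would use the standard inequality from the presentation $2$-complex $K$. We have $\pi_1(K)=\pi$, and $H_2(\pi)$ is a quotient of $H_2(K)$. The cellular chain complex gives
\[
\operatorname{rank}H_2(K)-\operatorname{rank}H_1(K)+\operatorname{rank}H_0(K)=1-d+s,
\]
so $s-d+1\ge \operatorname{rank}H_2(K)-0\cdot\ldots+\,$, more precisely $s\ge d-\operatorname{rank}H_1(\pi)+\operatorname{rank}H_2(\pi)$, which reads $s\ge d-10+25=d+15$ since $H_1(K)=H_1(\pi)$. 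Equivalently, $s\ge d-\operatorname{rk}H_1+d(H_2)$ (Epstein's inequality). In particular $s\ge 25$ and $d-s\le-15$.

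\textbf{Step 3: attaining the bounds.} The presentation of Proposition \zcref{algebraic_presentation} has $d=10$ and $s=25$, so it realizes $d(\pi)_{min}=10$, $s(\pi)_{min}=25$ and $\D(\pi)=-15$.

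The main obstacle is the injectivity in Step 1. This is exactly where Lemma \zcref{linearly_independence_lemma} is needed, and it relies on the explicit relator list and on Hall's weight-two identification. Everything else is formal.
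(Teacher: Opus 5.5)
Your proposal is correct and follows the paper's proof essentially step for step. You use Hopf's formula with $N\subset[F_1,F_1]$, get injectivity of $N/[F_1,N]\to[F_1,F_1]/[F_1,[F_1,F_1]]$ from the linear independence of $\bar q_1,\dots,\bar q_{25}$, and then combine the Euler characteristic of the presentation complex $K$ with the surjection $H_2(K)\to H_2(\pi)$ to obtain $s\ge d+15$; the only fix needed is to replace the half-written inequality in Step 2 by $b_2(K)=s-d+10\ge 25$, which is what you actually use.
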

\begin{proof}
    Let $ N_1 := \langle \langle q_1, \dots, q_{25} \rangle \rangle_{F_1}$ be the normal closure of the relators in $F_1$.
    Proposition \zcref{algebraic_presentation} gives $\pi \simeq F_1/N_1$.
    By Lemma \zcref{linearly_independence_lemma}, each $q_i$ lies in $[F_1, F_1]$.
    Moreover, since $[F_1, F_1] \triangleleft F_1$, the whole normal closure $N_1$ lies in $[F_1, F_1]$.
    Thus,
    \begin{equation*}
        H_1(\pi, \bZ) = (F_1/N_1)^{ab} \simeq F_1/[F_1, F_1] \simeq \bZ^{10}.
    \end{equation*}

    For a presentation $\pi = F_1/N_1$, Hopf's formula (\cite[Chapter I\hspace{-1.2pt}I, Theorem 5.3]{Bro82}) computes the second group homology as
    \begin{equation*}
        H_2(\pi, \bZ) = H_2(F_1/N_1, \bZ) \simeq (N_1\cap[F_1, F_1])/[F_1, N_1] = N_1/[F_1, N_1].
    \end{equation*}
    So, let us compute $N_1/[F_1, N_1]$.
    Since $[N_1, N_1] \subset [F_1, N_1]$, the group $N_1/[F_1, N_1]$ is abelian.
    Therefore, there exists a surjection
    \begin{equation*}
        \alpha \colon \bZ^{25} \twoheadrightarrow N_1/[F_1, N_1], \ \ (a_1, \dots, a_{25}) \mapsto q_1^{a_1}\cdots q_{25}^{a_{25}} [F_1, N_1] .
    \end{equation*}
    The inclusion $N_1 \subset [F_1, F_1]$ and $[F_1, N_1] \subset [F_1, [F_1, F_1]]$ induce a homomorphism
    \begin{equation*}
        N_1/[F_1, N_1] \ra [F_1, F_1]/[F_1, [F_1, F_1]].
    \end{equation*}
    Since the classes of the $q_i$ generate its domain, its image is exactly $L := \sum_{i = 1}^{25} \bZ \bar{q}_i$.
    Denote the resulting surjection onto $L$ by $\beta$.
    We obtain
    \begin{equation*}
        \bZ^{25} \xrightarrow{\alpha} N_1/[F_1, N_1] \xrightarrow{\beta} L \hookrightarrow [F_1, F_1]/[F_1, [F_1, F_1]].
    \end{equation*}
    The composite $\beta \alpha$ sends the standard basis of $\bZ^{25}$ to $\bar{q}_1, \dots, \bar{q}_{25}$, which form a basis of $L$ by Lemma \zcref{linearly_independence_lemma}.
    Thus, $\beta\alpha$ is an isomorphism. 
    In particular, $\ker(\alpha) \subset \ker(\beta \alpha) = 0$.
    Thus, the surjective map $\alpha$ is actually an isomorphism.
    Therefore, 
    \begin{equation*}
        H_2(\pi, \bZ) \simeq N_1/[F_1, N_1] \simeq \bZ^{25}.
    \end{equation*}

    We now consider an arbitrary finite presentation
    \begin{equation*}
        \pi = \langle x_1, \dots, x_d \ | \ r_1, \dots, r_s \rangle.
    \end{equation*}
    The image of $x_1, \dots, x_d$ generate the abelianization $\pi^{ab} \simeq \bZ^{10}$, so there is a surjection $\bZ^d \twoheadrightarrow \bZ^{10}$.
    Hence $d \geq 10$.

    Let $X_{\pi}$ be a $2$-dimensional cell complex with $\pi_1(X_{\pi}) \simeq \pi$ (\cite[Corollary 1.28]{Hat02}).
    Here, $X_{\pi}$ is constructed from $\bigvee_{i = 1}^{d} S^1$ by attaching $2$-cells $e_j^2$ by the loops specified by the word $r_j$.
    Therefore $\chi(X_{\pi}) = 1 - d + s$.
    Put $b_i(X_{\pi}) := \dim_{\bQ}H_i(X_{\pi}, \bQ)$.
    Since $X_{\pi}$ is connected and two-dimensional, and $b_1(X_{\pi}) = \rank H_1(\pi, \bZ) = 10$, its Euler characteristic also satisfies
    \begin{equation*}
        \chi(X_{\pi}) = 1 - b_1(X_{\pi}) + b_2(X_{\pi}) = 1 - 10 + b_2(X_{\pi}).
    \end{equation*}
    Comparing these two expressions gives $b_2(X_{\pi}) = s - d + 10$.
    Hopf's exact sequence \cite[(0.1)]{Bro82} gives
    \begin{equation*}
        \pi_2(X_{\pi}) \ra H_2(X_{\pi}, \bZ) \ra H_2(\pi, \bZ) \ra 0,
    \end{equation*}
    and a surjection $H_2(X_{\pi}, \bQ) \twoheadrightarrow H_2(\pi, \bQ) \simeq \bQ^{25}$.
    Therefore, $b_2(X_{\pi}) \geq 25$.
    Together with the above equation, this gives $s-d+10 \geq 25$, and hence $s \geq d + 15 \geq 25$.
    Since the presentation is arbitrary, we get
    \begin{equation*}
         d(\pi)_{min} = 10, \ \ s(\pi)_{min} = 25, \ \  \D(\pi) \leq -15.
    \end{equation*}
    The presentation given by Proposition \zcref{algebraic_presentation} attains all three bounds, proving the statement.

\end{proof}

\subsection{$n = 3$: Fano threefold}

\begin{Prop}
    For $n = 3$, $W$ is irreducible of class $8H$ and has projective degree $32$ in $X$.
    The Reidemeister--Schreier presentation of $\pi_1(X \setminus W)$ has $193$ generators and $864$ formal relators, and Tietze transformations can transform it into a presentation with $4$ generators and $78$ relators.
    Its abelianization $H_1(X \setminus W, \bZ)$ is $\bZ/8\bZ$.
\end{Prop}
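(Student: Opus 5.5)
The statement for $n=3$ splits into four claims, and each except the Tietze reduction is a specialization of an earlier result.

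First, I will take irreducibility and the class from Proposition \zcref{X_and_W} (3) with $n=3$: $W$ is irreducible and $[W]=4(n-1)[H]=8[H]$.

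Second, I will compute the projective degree. $X\subset\bP^5_{\bC}$ is a complete intersection of two quadrics, so $\deg X = H^3 = 4$. Hence the degree of $W$ is $W\cdot H^2 = 8H^3 = 32$, which matches $16(n-1)$ in Theorem \zcref{Summary_theorem}.

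Third, the abelianization is Theorem \zcref{first_homology_group} (3): $\bZ/4(n-1)\bZ=\bZ/8\bZ$.

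Fourth, the counts of the raw presentation come from Proposition \zcref{numbers_of_generators_relators} with $n=3$ and $m=5$. There are $2^{6}\cdot 3+1=193$ generators and $2^{5}(18+6+3)=32\cdot 27=864$ formal relators.

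The remaining and substantive claim is the reduction to $4$ generators and $78$ relators. I will mimic the $n=2$ elimination in Appendix \zcref{Tietze_transformation_app_A}, proceeding in three steps.

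\begin{enumerate}
\item Use (RS-SQ) to eliminate roughly half of the $a$-type generators. Each relation $a_{I,k}a_{I\triangle k,k}=1$ pairs $a_{I,k}$ with $a_{I\triangle k,k}$, and the $2^m-1$ trivial generators fix many of these to $1$.
\item Use (RS-M1), which says $a_{I,k}$ conjugates $\sigma_{I\triangle k,i}$ to $\sigma_{I,i}$, together with (RS-M3), to express every $\sigma_{I,2}$ in terms of $\sigma_{\emptyset,2}$ and the surviving $a_{I,k}$.
\item Use (RS-M2) and (RS-SQ0) to eliminate the remaining $a$-type generators in favour of a small number of conjugates of $\sigma_{I,1}$ and $\sigma_{I,2}$.
\end{enumerate}

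After this, I will substitute the eliminations into the other relators, freely and cyclically reduce them, and delete duplicates, trivial relators and conjugates of other relators. This should leave $4$ generators and $78$ relators.

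The main obstacle is this last step. With $n=3$ the bookkeeping is far larger than for $n=2$, and I would carry it out, and verify it, by computer (for instance GAP's \texttt{PresentationViaCosetTable}/\texttt{SimplifyPresentation} applied to the subgroup $\ker\tilde\mu$ of the finitely presented group $\pi_1^{\orb}(\cU_3)$). I would then record the resulting $4$-generator presentation and check two things:
\begin{itemize}
\item its abelianization is $\bZ/8\bZ$, consistent with Theorem \zcref{first_homology_group};
\item a coset enumeration confirms that the index of the subgroup is $|G|=2^{5}=32$.
\end{itemize}
Unlike $n=2$, I make no minimality claim here, so the numbers $4$ and $78$ need only be exhibited, not shown to be optimal.
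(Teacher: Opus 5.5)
Your proposal follows the paper's proof. Irreducibility, the class $8H$, the degree $8H^3=32$, the abelianization $\bZ/8\bZ$ and the raw counts $193$ and $864$ are read off from Proposition~\zcref{X_and_W}~(3), Theorem~\zcref{first_homology_group}~(3) and Proposition~\zcref{numbers_of_generators_relators} exactly as you do, and the $4$-generator, $78$-relator presentation is, as you propose, established only by a machine-verified Tietze sequence (the certificate described in \zcref{appndix_C}).

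One caveat: your hand outline does not carry that part, and its step~3 would not work as stated. In (RS-M2) each $a$-generator occurs once with each sign, so it cannot be solved for, and (RS-SQ0) supplies only $16$ distinct relators against the $49$ non-trivial $a$-type generators left after (RS-SQ). What actually expresses the $a_{I,j}$ through the meridians $\sigma_{I,1}$ is (RS-M3) together with (RS-SQ), as in \eqref{eq:transition_formula}. So the specific numbers $4$ and $78$ come only from exhibiting and certifying a concrete reduction, as the paper does; a generic \texttt{SimplifyPresentation} run need not land on them.
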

\begin{proof}
    By Proposition \zcref{X_and_W}, $W$ is irreducible of class $8H$ and has projective degree $8H^3 = 32$, since $H^3 = \deg (X) = 4$ in $\bP^5_{\bC}$.
    Proposition \zcref{numbers_of_generators_relators} tells us there are $3\cdot2^6 +1 = 193$ generators and $2^5\cdot27=864$ relators.
    For a presentation with $4$ generators and $78$ relators, the complete sequence and a verification program are given in \zcref{appndix_C}.
    The abelianization follows from Theorem \zcref{first_homology_group} (3). 
\end{proof}
We do not address the minimality of this presentation.

\appendix
\section{The algebraic calculation for $n = 2$}\label{Tietze_transformation_app_A}
Starting from the Reidemeister--Schreier presentation of Theorem \zcref{Explicit_description_of_fundamental_group}, this appendix\footnote{We used ChatGPT 6 Astra for the computations explained in this appendix.}
records and verifies Tietze transformations from $65$ generators and 240 formal relators to the $10$ generators and $25$ relators presentation stated in Proposition \zcref{algebraic_presentation}.
For the intermediate calculations, extend the word abbreviations and the relator numbering by
\begin{equation}\label{eq:three_conjugate_words}
\begin{aligned}
U_5&=g_9g_{10}g_9^{-1},&
q_{26}&=[g_{10},g_7^{-1}g_6g_7],\\
U_6&=g_8^{-1}U_2g_6U_2^{-1}g_8,&
q_{27}&=[U_5,U_6],\\
U_7&=g_8^{-1}U_2U_5U_2^{-1}g_8,&
q_{28}&=[U_7,g_6].
\end{aligned}
\end{equation}
The $U_i$ abbreviate words, without adding generators.  
The relators $q_{26},q_{27},q_{28}$ occur only in the intermediate presentation.
We list the final relators $q_1,\ldots,q_{25}$ in \zcref{relators_list}.
The intermediate target, in the basis \zcref{eq:newten}, is
\begin{equation}\label{eq:normalform28}
\left\langle g_1,\ldots,g_{10}\ \middle|\
q_1=\cdots=q_{28}=1\right\rangle .
\end{equation}

All input words and finite certificates are available in the
computational supplement linked under ``Notes \& Files'' on the
author's papers page:
\begin{center}
\url{https://sites.google.com/view/yukimatsubara/papers}.
\end{center}
The link leads to the Google Drive archive. For this release,
download \path{wobbly_n2_calculations_2026_09_25.zip}, extract it,
and run the following commands from the resulting
\path{wobbly_n2_calculations/} directory:
\begin{verbatim}
python3 -B verify_readable.py --audit
python3 -B verify_independence.py
\end{verbatim}
These commands require Python 3.9 or later and only its standard
library.
You can run them without \texttt{-O}, \texttt{-OO}, or
\texttt{PYTHONOPTIMIZE}.
The first reconstructs all $16\cdot15=240$ Schreier rewrites,
verifies the generator eliminations and reversible relation
replacements, derives the action table, checks the three final
conjugacy identities, and verifies the twenty-eight- and
twenty-five-relator presentations.
The \texttt{--audit} option independently checks every commutation
normalization using allowed adjacent swaps and free cancellations.
The second checks the exponent sums and primitive independence of
the twenty-five quadratic classes using an integral $25\times25$
minor of determinant $-1$.
The first command generates \path{proof_normal_form.json} in the
extracted directory. It records the precise intermediate
presentation \zcref{eq:normalform28} and the final presentation.
The second generates \path{verification/block_minor.json} and
\path{verification/quadratic_coefficients.csv}.
These generated files are not included in the archive, and the
extracted directory must be writable.

\subsection{Eliminating the $a$-type Schreier generators}\label{elimination}
Put $\sigma :=\sigma_1$ and $m_I=\sigma_{I,1}=a_I\sigma a_I^{-1}$ for $I\subseteq\{1,2,3,4\}$.
There are $16$ meridians $m_I$ and $49$ non-identity transition generators $a_{I,j}$.
For a fixed $j$, let
\begin{equation*}
    J_0=I\cap\{1,\ldots,j-1\},\qquad
J_1=J_0\cup\{j\},\qquad
\epsilon=\begin{cases}1&j\in I,\\0&j\notin I.\end{cases}
\end{equation*}
All the transition generators can be eliminated using the common formula
\begin{equation}\label{eq:transition_formula}
a_{I,j}
=m_I^{-1}m_{J_\epsilon}m_{J_{1-\epsilon}}^{-1}
 m_{I\triangle\{j\}}.
\end{equation}
Indeed, if $j\notin I$, write $a_I=a_{J_0}B$, where every index in $B$
is greater than $j$.  Relation~\eqref{rel:M3} says that $B$ commutes
with $\sigma a_j\sigma^{-1}$.  Expanding the right-hand side
of~\eqref{eq:transition_formula} therefore gives
$a_Ia_j a_{I\triangle\{j\}}^{-1}$.  The case $j\in I$ follows by
inverting the corresponding formula and using $a_j^2=1$.

Next we eliminate, in the order,
\[
m_{124},\quad m_{134},\quad m_{234},\quad
m_{1234},\quad m_{123},\quad m_{34}.
\]
Here, for example, $m_{124}$ means $m_{\{1,2,4\}}$.
Each defining relation contains the meridian to be eliminated exactly
once.  The surviving generators are
\[
(\widetilde g_1,\ldots,\widetilde g_{10})
=(m_\varnothing,m_1,m_2,m_{12},m_3,m_{13},m_{23},m_4,m_{14},m_{24}).
\]
After deleting empty relations and identifying cyclic conjugates and
inverses, $41$ relations remain.  The defining words for all $55$ eliminations, together with their order, are recorded in
\path{data/eliminate55.json}.  Each entry consists of the
eliminated generator, its replacement word, and its defining relator.
Substitution freely reduces the last word to the identity.

\subsection{Separating the action equations from the other relations}
At each step, a retained relator distinct from the one being replaced is cyclically permuted and possibly inverted to the form $AB$, and is used to replace a cyclically written relator $AX$ by $B^{-1}X$.
The $176$ recorded replacements, with free and cyclic reduction and identification up to cyclic permutation and inversion, remove twelve
duplicate relators and no empty relators, leaving a list $\rho_1,\ldots,\rho_{29}$.
The unnormalized words and the exact replacement identities are supplied in \path{relators_29_25.json} and \path{normalization.json}, respectively, in \path{data/}. The preceding replacements are recorded in \path{data/shorten176.json}.\\ 

In this appendix, $\rho_i$ always denotes the normalized word with integer key $i$ in the \texttt{normalized} entry of \path{normalization.json}.  
The normalization preserves the indices.
Each product identity uses its replaced relator exactly once, so the replacement is reversible while retaining all other relations.

Temporarily, we replace $\widetilde g_8$ by the free generator
\[
T=\widetilde g_2\widetilde g_3\widetilde g_5\widetilde g_8.
\]
Thus $\widetilde g_8=
\widetilde g_5^{-1}\widetilde g_3^{-1}\widetilde g_2^{-1}T$,
and $T=g_1g_8$ after \zcref{eq:newten}.

The action equations are obtained in the following order.
\begin{center}
\small
\renewcommand{\arraystretch}{1.3}
\begin{tabular}{c|p{0.55\textwidth}|l}
Number & Equations solved & Source relations\\\hline
16 & Eleven fixed images and five direct conjugations &
$\rho_i$, $i\in D$\\
2 & $\displaystyle
\widetilde g_5\widetilde g_j\widetilde g_5^{-1}
=(\widetilde g_2\widetilde g_3)^{-1}
T\widetilde g_jT^{-1}(\widetilde g_2\widetilde g_3)$,
$j=9,10$ & $\rho_3,\rho_1$\\
3 & $\widetilde g_2^{-1}T\widetilde g_2$,\quad
$\widetilde g_3^{-1}T\widetilde g_3$,\quad
$\widetilde g_5T\widetilde g_5^{-1}$ &
$\rho_{29},\rho_{25},\rho_{24}$
\end{tabular}
\end{center}
Here
\begin{equation*}
   D=\{2,5,7,8,9,10,11,12,13,14,15,17,18,19,21,22\}. 
\end{equation*}
In the first block, the fixed images are those of
\[
\begin{array}{c|l}
\text{conjugating generator}&\text{fixed generators}\\\hline
\widetilde g_2&\widetilde g_1,\widetilde g_4,\widetilde g_6,\widetilde g_9\\
\widetilde g_3&\widetilde g_1,\widetilde g_4,\widetilde g_7,\widetilde g_{10}\\
\widetilde g_5&\widetilde g_1,\widetilde g_6,\widetilde g_7 .
\end{array}
\]

The five other images in that block have index pairs $(2,7),(2,10),(3,6),(3,9),(5,4)$.
In the last block, each unresolved conjugate occurs exactly once after substituting the previously solved images.  
Solving for it and its inverse is therefore triangular. 
The two symbolic images in the middle block are evaluated as soon as the first two images of $T$ are known.
Changing basis by \zcref{eq:newten} and retaining $q_{23},q_{22},q_{26}$ during simplification gives precisely \zcref{tab:actions}.
Both compositions of every resulting action with its inverse are checked as free-group substitutions.

It remains to account for eight relations.  
Put
\[
\widehat q_{24}=g_9q_{24}g_9^{-1}=[U_5,g_6],\qquad
\widehat q_{25}=g_4^{-1}q_{25}g_4.
\]
After collection with the
action equations, they have the following equivalent defining forms:
\begin{equation}\label{eq:residual_eight}
\renewcommand{\arraystretch}{1.2}
\begin{array}{c|l@{\qquad}c|l}
\text{Source}&\text{Form}&\text{Source}&\text{Form}\\\hline
\rho_4&q_{22}&\rho_6&q_{23}\\
\rho_{16}&q_{24}&\rho_{23}&q_{25}\\
\rho_{20}&q_{26}&\rho_{26}&q_{23}\\
\rho_{27}&[U_5^{-1},g_6U_6^{-1}]&
\rho_{28}&U_7^{-1}\widehat q_{25}g_6^{-1}U_7g_6
\end{array}
\end{equation}

Entries in this table specify equivalent relations, allowing conjugation or inversion of an individual relator.
They are not asserted to be identical free words.
More precisely, if $\overline\rho_i$ denotes the collected word, then
\[
g_2\overline\rho_{27}g_2^{-1}
=U_1[U_5^{-1},g_6U_6^{-1}]U_1^{-1}
\]
is an exact identity after substituting the action equations.
For $\rho_{28}$, first normalize using $q_{23}$, and then conjugate by $g_3^{-1}$.
Its replacement by $U_7^{-1}\widehat q_{25}g_6^{-1}U_7g_6$ is valid \emph{modulo the retained relations $q_{23},q_{22},q_{26}$ and
$g_3^{-1}q_{23}g_3$}.
The file \path{data/rho28_factorization.json} gives the explicit product of conjugates proving this statement.
Its product identity is checked modulo $q_{23},q_{22}$.
Removing only the factors conjugate to $q_{26}$ and $g_3^{-1}q_{23}g_3$ leaves one conjugate of $U_7^{-1}\widehat q_{25}g_6^{-1}U_7g_6$.
In particular, this step does not assume the relation $q_{28}=1$ that it will produce.
We also recheck the relations used to solve the actions after simplifying to \zcref{tab:actions}.
Thus no defining relation is lost by simplifying an image.

\subsection{Completing the Tietze transformations}
The following two identities hold in the free group:
\begin{align*}
[U_5^{-1},g_6U_6^{-1}]
&=(U_5^{-1}\widehat q_{24}^{-1}U_5)
  (g_6U_6^{-1}U_5^{-1})q_{27}(g_6U_6^{-1}U_5^{-1})^{-1},\\
U_7^{-1}\widehat q_{25}g_6^{-1}U_7g_6
&=(U_7^{-1}\widehat q_{25}U_7)
  (g_6^{-1}U_7^{-1})q_{28}(g_6^{-1}U_7^{-1})^{-1}.
\end{align*}
Since $\widehat q_{24}$ and $\widehat q_{25}$ are conjugates of $q_{24}$ and $q_{25}$, the retained relations $q_{24}=q_{25}=1$ allow the last two entries of \zcref{eq:residual_eight} to be replaced reversibly by $q_{27}=q_{28}=1$.
Deleting the duplicate $q_{23}$ gives exactly \zcref{eq:normalform28}.

Finally, the equations of \zcref{tab:actions} alone give
\begin{equation}\label{eq:three_conjugates}
\begin{split}
q_{26}&=(U_1g_3^{-1}g_2^{-1}g_9)\,q_{24}\,(U_1g_3^{-1}g_2^{-1}g_9)^{-1},\\
q_{27}&=(g_8^{-1}g_5g_9)\,q_{24}\,(g_8^{-1}g_5g_9)^{-1},\\
q_{28}&=(U_2g_5^{-1}g_9)\,q_{24}\,(U_2g_5^{-1}g_9)^{-1}.
\end{split}
\end{equation}
Since $q_{24}=1$ is retained, $q_{26},q_{27},q_{28}$ are redundant and may be deleted.
This completes the Tietze transformations to $\langle g_1,\ldots,g_{10}\mid q_1,\ldots,q_{25}\rangle$.

\section{The complete twenty-five relators}\label{relators_list}
We use the generators $g_1, \dots, g_{10}$ of \zcref{eq:newten}.
The following are the ordered relators of Proposition \zcref{algebraic_presentation}.
Their order agrees with \path{RELATORS} in \path{presentation.py} in the computational supplement.
Here all word abbreviations have been expanded.
\allowdisplaybreaks

\begin{align*}
q_{1} &= g_{2} g_{1} g_{2}^{-1} g_{1}^{-1}\\[2pt]
q_{2} &= g_{2} g_{4} g_{2}^{-1} g_{4}^{-1}\\[2pt]
q_{3} &= g_{2} g_{6} g_{2}^{-1} g_{6}^{-1}\\[2pt]
q_{4} &= g_{2} g_{7} g_{2}^{-1} g_{1} g_{4} g_{6} g_{7}^{-1} g_{6}^{-1} g_{4}^{-1} g_{1}^{-1}\\[2pt]
q_{5} &= g_{2} g_{8} g_{2}^{-1} g_{4} g_{6} g_{9} g_{8}^{-1} g_{9}^{-1} g_{6}^{-1} g_{4}^{-1}\\[2pt]
q_{6} &= g_{2} g_{9} g_{2}^{-1} g_{9}^{-1}\\[2pt]
q_{7} &= g_{2} g_{10} g_{2}^{-1} g_{1} g_{4} g_{9} g_{10}^{-1} g_{9}^{-1} g_{4}^{-1} g_{1}^{-1}
\end{align*}

\begin{align*}
q_{8} &= g_{3} g_{1} g_{3}^{-1} g_{1}^{-1}\\[2pt]
q_{9} &= g_{3} g_{4} g_{3}^{-1} g_{4}^{-1}\\[2pt]
q_{10} &= g_{3} g_{6} g_{3}^{-1} g_{7} g_{1} g_{4} g_{6}^{-1} g_{4}^{-1} g_{1}^{-1} g_{7}^{-1}\\[2pt]
q_{11} &= g_{3} g_{7} g_{3}^{-1} g_{7}^{-1}\\[2pt]
q_{12} &= g_{3} g_{8} g_{3}^{-1} g_{4} g_{7} g_{10} g_{1} g_{8}^{-1} g_{1}^{-1} g_{10}^{-1}\\*
&\quad g_{7}^{-1} g_{4}^{-1}\\[2pt]
q_{13} &= g_{3} g_{9} g_{3}^{-1} g_{10} g_{1} g_{4} g_{9}^{-1} g_{4}^{-1} g_{1}^{-1} g_{10}^{-1}\\[2pt]
q_{14} &= g_{3} g_{10} g_{3}^{-1} g_{10}^{-1}
\end{align*}

\begin{align*}
q_{15} &= g_{5} g_{1} g_{5}^{-1} g_{1}^{-1}\\[2pt]
q_{16} &= g_{5} g_{4} g_{5}^{-1} g_{4} g_{6} g_{7} g_{4}^{-1} g_{7}^{-1} g_{6}^{-1} g_{4}^{-1}\\[2pt]
q_{17} &= g_{5} g_{6} g_{5}^{-1} g_{4} g_{6} g_{7} g_{6}^{-1} g_{7}^{-1} g_{6}^{-1} g_{4}^{-1}\\[2pt]
q_{18} &= g_{5} g_{7} g_{5}^{-1} g_{4} g_{6} g_{7}^{-1} g_{6}^{-1} g_{4}^{-1}\\[2pt]
q_{19} &= g_{5} g_{8} g_{5}^{-1} g_{4} g_{6} g_{7} g_{8}^{-1} g_{7}^{-1} g_{6}^{-1} g_{4}^{-1}\\[2pt]
q_{20} &= g_{5} g_{9} g_{5}^{-1} g_{8} g_{9}^{-1} g_{8}^{-1}\\[2pt]
q_{21} &= g_{5} g_{10} g_{5}^{-1} g_{8} g_{10}^{-1} g_{8}^{-1}
\end{align*}

\begin{align*}
q_{22} &= g_{9} g_{7} g_{9}^{-1} g_{7}^{-1}\\[2pt]
q_{23} &= g_{1} g_{8} g_{1}^{-1} g_{8}^{-1}\\[2pt]
q_{24} &= g_{10} g_{9}^{-1} g_{6} g_{9} g_{10}^{-1} g_{9}^{-1} g_{6}^{-1} g_{9}\\[2pt]
q_{25} &= g_{4} g_{8}^{-1} g_{4} g_{6} g_{7} g_{9} g_{10} g_{1} g_{4}^{-1} g_{1}^{-1}\\
&\quad g_{10}^{-1} g_{9}^{-1} g_{7}^{-1} g_{6}^{-1} g_{4}^{-1} g_{8}.
\end{align*}

\section{The algebraic calculation for $n = 3$}\label{appndix_C}
This appendix\footnote{We used Codex for the computations explained in this appendix.}
records the Tietze transformations from the $193$-generator, $864$-relator Reidemeister--Schreier presentation of Theorem \zcref{Explicit_description_of_fundamental_group} to a presentation with $4$ generators and $78$ relators.
The scripts \path{n3_model.py} and \path{independent_model.py} independently reconstruct the input from \zcref{presentation}.
\path{verify_input.py} checks their agreement and the abelianization $\bZ/8\bZ$.

The file \path{tietze_phases.json} lists $26$ certificate phases.
They record generator eliminations using single-occurrence defining relators, replacements using retained relators, and elementary Nielsen changes of generators.
Further deletions and shortenings are certified by reversible free-group rewriting using explicitly proved consequences of retained relators.
The program \path{verify_tietze.py} replays the sequence and checks that, in every such consequence rewrite, all supporting relators remain present and exclude the target relator.
It compares the final $78$ relators and the definitions of the four generators in the ambient group with \path{presentation.json}.
The final presentation is displayed in \path{presentation_g.txt}.

The distribution page for the computational supplement is the author's papers page:
\begin{center}
\url{https://sites.google.com/view/yukimatsubara/papers}.
\end{center}
The download link is listed under ``Notes \& Files'' and leads to Google Drive.
The version used here is supplied as \path{wobbly_n3_tietze_2026_09_25.zip}, with SHA-256
\begin{center}
\small\ttfamily
af3a5eb9584f6e4f6eab4f0d7fc81c0d5f2fdb4d028a7345440bbaeb276bad9c
\end{center}
Extract it and run the following command from \path{wobbly_n3_tietze/}:
\begin{verbatim}
python3 -B verify_tietze.py
\end{verbatim}
Python 3.9 or later and its standard library suffice; run without \texttt{-O}, \texttt{-OO}, or \texttt{PYTHONOPTIMIZE}.
The command must exit with status $0$ and report \texttt{PASS}.
It generates reports in \path{verification/}, so the extracted directory must be writable.
The supplied \path{SHA256SUMS} excludes these generated reports.

\bibliographystyle{alpha}
\bibliography{references.bib}

\Address

\end{document}